\newtheorem{theorem}{Theorem}[section]
\newtheorem{lemma}{Lemma}[section]
\newtheorem{remark}{Remark}[section]
\newtheorem{example}{Example}[section]
\renewcommand{\theequation}{\arabic{section}.\arabic{equation}}
\renewcommand{\thetheorem}{\arabic{section}.\arabic{theorem}}
\renewcommand{\thelemma}{\arabic{section}.\arabic{lemma}}
\renewcommand{\theproposition}{\arabic{section}.\arabic{proposition}}
\renewcommand{\thealgorithm}{\arabic{section}.\arabic{algorithm}}
\newcommand\dd{~{\rm d}}
\newcommand\Ham{\mathcal{H}}
\newcommand\R{\mathbb{R}}
\newcommand\N{\mathbb{N}}
\newcommand\Z{\mathbb{Z}}
\newcommand\D{\nabla}
\newcommand\del{\delta}
\newcommand{\<}{\langle}
\renewcommand{\>}{\rangle}
\newcommand{\rzz}{\mathscr{R}_z}
\newcommand{\rz}[1]{\mathscr{R}_z({#1})}
\newcommand{\rzo}[2]{\mathscr{R}_z^{#1}({#2})}
\newcommand{\rzoR}{\widetilde{\mathscr{R}}_z^{\Omega_R}}
\newcommand{\rzotilR}{\widetilde{\mathscr{R}}^{\Omega_R}_z(y)}
\newcommand{\resprime}{\mathscr{R}^{\Omega'}_z}
\newcommand{\restilde}{\widetilde{\mathscr{R}}^{\Omega_R}_z}
\newcommand{\dis}[2]{{\rm dist}\left(#1,#2\right)}
\title{QM/MM Methods for Crystalline Defects. \\
  Part 1: Locality of the Tight Binding Model}
\author{Huajie Chen and Christoph Ortner\footnote{{\tt
      huajie.chen@warwick.ac.uk} and {\tt c.ortner@warwick.ac.uk}.  Mathematics
    Institute, University of Warwick, Coventry CV47AL, UK.  This work was
    supported by ERC Starting Grant 335120. CO work was also supported by the
    Leverhulme Trust through a Philip Leverhulme Prize and by EPSRC Standard
    Grant EP/J021377/1.}}
\date{}
\begin{document}
\maketitle

\begin{abstract}
  The tight binding model is a minimal electronic structure model for molecular
  modelling and simulation. We show that the total energy in this model can be
  decomposed into site energies, that is, into contributions from each atomic
  site whose influence on their environment decays exponentially. This result
  lays the foundation for a rigorous analysis of QM/MM coupling schemes.
\end{abstract}

\section{Introduction} \label{sec-introduction}
\setcounter{equation}{0}
QM/MM coupling methods are a class of multi-scale schemes in which a quantum
mechanical (QM) simulation is ``embedded'' in a larger molecular mechanics (MM)
simulation. Due to the high computational cost of QM models, schemes of this
type have become an indispensable tool in many scientific disciplines
\cite{bernstein09, csanyi04, gao02, kermode08, ogata01, warshel76}.  The present
work is the first part in a series establishing the mathematical foundation of
QM/MM schemes in the context of materials modelling.

% Our main result is locality of interaction in a minimal electronic structure
% model (tight binding) and is the first part in a series establishing the
% mathematical foundation of QM/MM schemes in the context of materials
% modelling.

It is pointed out in \cite{csanyi05} that a minimal requirement for a QM model
to be suitable for QM/MM coupling is the {\em strong locality} of forces,
\begin{equation}
  \label{eq:strong_locality}
  \bigg| \frac{\partial f_n}{\partial y_m} \bigg| \to 0 \quad
  \text{``sufficiently rapidly'' as } r_{nm} \to 0,
\end{equation}
where $r_{nm} = |y_n-y_m|$ and $f_n$ denotes the force acting on an atom at
position $y_n$ within a collection of nuclei at positions
$\{y_\ell\} \subset \R^d$.  The condition \eqref{eq:strong_locality} is called
{\em strong locality} to set it apart from the weaker condition of locality of
the density matrix, which is already well understood (see e.g. \cite{benzi13,
  goedecker99} and \S \ref{sec-further-remark}).

To study \eqref{eq:strong_locality} we take a general tight binding method at
finite electronic temperature as a model problem. We prove an even stronger
condition than \eqref{eq:strong_locality}, {\em strong energy locality}: Given a
finite collection of nuclei $y = \{y_\ell\}$, we decompose the total energy
$E = E(y)$ into
\begin{equation}
  \label{eq:intro:energy_decomposition}
  E(y) = \sum_{\ell} E_\ell(y),
\end{equation}
where the {\em site energies} $E_\ell(y)$ are {\em local} in the sense that
\begin{equation}
  \label{eq:intro:E locality}
  \bigg|\frac{\partial E_\ell(y)}{\partial y_n}\bigg| 
  \lesssim e^{-\gamma r_{\ell n}}, \qquad 
  \bigg|\frac{\partial^2 E_\ell(y)}{\partial y_n \partial y_m} \bigg| \lesssim
  e^{-\gamma (r_{\ell n} + r_{\ell m})},
\end{equation}
for some $\gamma > 0$, and analogous results for higher order derivatives. While
the specific form of the decomposition we employ (cf. \S~\ref{sec-site-energy})
is well-known \cite{ercolessi05,finnis03}, the locality result \eqref{eq:intro:E
  locality} is, to the best of our knowledge, new. We are only aware of one
analogous result, for the Thomas--Fermi--von Weizs\"{a}cker model
\cite{nazar14}.

This locality result has a range of consequences, such as: (1) It provides a
strong theoretical justification for the concept of an interatomic
potential. (2) From a purely analytical point of view, there is little (if any)
distinction between the tight binding model and interatomic potential
models. This means that we can apply many of the analytical tools developed for
interatomic potential models, for example: (3) We can rigorously formulate and
analyze models of defects in infinite crystalline solids \cite{ehrlacher13}.
(4) We can extend the construction and analysis of atomistic/continuum
multi-scale schemes. In particular, the Cauchy--Born continuum limit analysis
\cite{ortner13} can be directly applied without additional work.

(5) Our main motivation, however, is to formulate and analyze new QM/MM coupling
schemes for crystal defects. In this endeavour we build on the successful theory
of atomistic/continuum coupling \cite{luskin13}, employing the tools and
language of numerical analysis. The key idea is that, due to
\eqref{eq:intro:energy_decomposition} and \eqref{eq:intro:E locality}, the total
energy can be approximated by
\begin{displaymath}
  E(y) \approx \sum_{\ell \in {\rm QM}} E_\ell(y) 
  + \sum_{\ell \in {\rm MM}} \tilde{E}_\ell(y),
\end{displaymath}
where $\tilde{E}_\ell$ is {\em not an off-the shelf site potential
  (Lennard-Jones, EAM, \dots)} as in previous works on QM/MM coupling, but
instead is a {\em controlled approximation to $E_\ell$}. We show in the
companion papers \cite{chenpreE, chenpreF} that this approach yields new QM/MM
schemes (both energy-based and force-based) with rigorous rates of convergence
in terms of the QM core region size.

\subsection{Outline}
In Section \ref{sec-tb-finite} we focus on {\em finite systems}. We first
present a thorough discussion of {\em real-space} tight binding models and then
establish the results \eqref{eq:intro:energy_decomposition} and
\eqref{eq:intro:E locality} in this context. In Section
\ref{sec-thermodynamic-limits} we then extend the definition of the site energy
as well as the locality results to infinite systems (with an eye to crystal
lattices) via a limiting procedure. In Section \ref{sec-crystal-defects} we
briefly present two applications of the locality results, both in preparation
for Parts 2 and 3 of this series: We extend the crystal defect model and the
convergence analysis for a truncation scheme from \cite{ehrlacher13} to the tight
binding model. Finally, in Section \ref{sec:appl:numerics}, we present some
preliminary numerical tests illustrating our analytical results.

\subsection{Further Remarks} \label{sec-further-remark}
{\bf Tight binding model. } Tight binding models are minimalistic quantum
mechanics type molecular models, used to investigate and predict properties of
molecules and materials in condensed phases. Both in terms of accuracy and
computational cost they are situated between accurate but computationally
expensive {\it ab initio} methods and fast but limited empirical methods. While
tight binding models are interesting in their own right, they also serve as a
convenient toy model for more accurate electronic structure models such as
(Kohn--Sham) density functional theory.

The study of defects in crystals is a field to which tight binding is well
suited, as it is frequently the case that the deviations from ideal bonding are
large enough that empirical potentials are not sufficiently accurate, but the
system size required to isolate the defect (e.g. for dislocations or cracks)
makes the use of {\it ab initio} calculations challenging. A number of studies
have been carried out in which tight binding is applied to simulations of
crystal defects, see e.g. \cite{kohyama94,lee85,nunes96,wang91}.

{\bf Weak versus strong locality. }  By {\em weak locality} we mean that the
electron density matrix has exponentially fast off-diagonal decay.  In the
context of tight binding, this means
\begin{eqnarray}\label{weak-locality-tb}
  \big[\Gamma(y)\big]_{mn}\lesssim e^{-\gamma r_{mn}}
\end{eqnarray}
(see \eqref{density-matrix} for the definition of the tight binding density
matrix $\Gamma(y)$).
In physics, this is often described by the term ``nearsightedness''
\cite{kohn59,prodan05}, which states that the electron properties of insulators
and metals at finite temperature do not depend on perturbations at distant
regions.  This property has, e.g., been exploited to create linear scaling
electronic structure algorithms \cite{baer97, bowler12, goedecker99, ismail99}.

However, the weak locality is not enough to validate a hybrid QM/MM approach
\cite{csanyi05}.  We need the stronger locality condition
\eqref{eq:strong_locality} to guarantee that the QM region is not affected by
the classical particles, and moreover that the forces in the QM region can be
computed to high accuracy by only considering a small QM neighbourhood.  The
decay rate in equation \eqref{eq:intro:E locality} then gives a guide to how
large the QM region needs to be (see also Theorem \ref{th:appl:min-approx} and
\cite{chenpreE,chenpreF}).
	
% {[More notes from Noam on references for locality:

% A lot of this knowledge in the community seems to be lore, rather than firmly derived.
% The first place I saw it is Stefan Goedecker’s Rev Mod Phys from 1999.

% It seems to be as far back as

% http://journals.aps.org/pr/abstract/10.1103/PhysRev.115.809

% and also (more recent, but still fairly old):

% http://journals.aps.org/prl/abstract/10.1103/PhysRevLett.79.3962

% http://journals.aps.org/prl/abstract/10.1103/PhysRevLett.82.2127 ]}
% ]}

{\bf Thermodynamic limit. } Thermodynamic limit problems (infinite body limit),
related to our analysis in Section \ref{sec-thermodynamic-limits}, have been
studied at great length in the analysis literature. The monograph \cite{catto98}
gives an extensive account of the major contributions and also presents the
thermodynamic limit problem for the Thomas--Fermi--von Weizs\"{a}cker (TFW)
model for perfect crystals. The thermodynamic limit of the reduced Hartree--Fock
(rHF) is studied in \cite{catto01} for perfect crystals. This literature also
contains many results on the modelling of local defects in crystals in the
framework of the TFW and rHF models, see e.g. \cite{blanc07, cances13b,
  cances08a, cances08b, cances11, cances13a, lahbabi14, lieb77}.

% Since the models are mathematically rigorous for finite size systems, a basic
% problem of interest for materials science is the derivation of the models for
% infinite systems. More precisely, we take a finite size system with $N$ atoms,
% and consider the following two questions on limits as $N$ grows to infinity: (i)
% does the energy per atom converge to a limit? (ii) is the (tight-binding) model
% still well-posed?
% %
% This is the so-called thermodynamic limit problem, which has been studied in
% some outstanding contributions for QM based models. 
% % To our best knowledge, no mathematical result gives the thermodynamic limit of the
% % Hartree-Fock model, the Kohn-Sham model, or the tight-binding model.

These discussions are restricted to the case where the nuclei are fixed on a
periodic lattice (or with a given local defect). Leaving the positions of the
nuclei free is also a case of great physical and mathematical
interest. Motivated by \cite{ehrlacher13} we present such a model in
\S~\ref{sec:tb model for point defects}, but postpone a complete analysis to
\cite{chenpre_vardef}.

A related problem is the continuum limit of quantum models. The TFW and rHF
models are studied in \cite{blanc02, cances10} where it is shown that, in the
continuum limit, the difference between the energies of the atomistic and
continuum models obtained using the Cauchy-Born rule tends to zero. The tight
binding and Kohn--Sham models are studied in a series of papers \cite{e07, e10,
  e11, e13}, which establish the extension of the Cauchy--Born rule for smoothly
deformed crystals. Our locality result yields an immediate extension of the
analysis of the Cauchy--Born model in the molecular mechanics case
\cite{ortner13}.

\subsection{Notation}
The symbol $\langle\cdot,\cdot\rangle$ denotes an abstract duality
pairing between a Banach space and its dual.  The symbol $|\cdot|$ normally
denotes the Euclidean or Frobenius norm, while $\|\cdot\|$ denotes an operator
norm. The constant $C$ is a generic positive constant that may change from one
line of an estimate to the next. When estimating rates of decay or convergence,
$C$ will always remain independent of the system size, of lattice position or of
test functions. The dependencies of $C$ will normally be clear from the context
or stated explicitly.

\subsection{List of assumptions}
Our analysis requires a number of assumptions on the tight binding model or the
underlying atomistic geometry. For the reader's convenience we list these with
page references and brief summaries:

\def\asL{{\bf L}\xspace}
\def\asHtb{{\bf H.tb}\xspace}
\def\asHloc{{\bf H.loc}\xspace}
\def\asHsym{{\bf H.sym}\xspace}
\def\asHemb{{\bf H.emb}\xspace}

\begin{center}
  \begin{minipage}{0.9\textwidth}
    \begin{tabular}{rrl}
      % Label & Page & Description \\[1mm]
      % \hline \\[-3mm]
      \asL & p. \pageref{as:asL} & uniform non-interpenetration \\
      \asHtb & p. \pageref{as:asHtb} & locality of Hamiltonian \\
      \asHloc & p. \pageref{as:asHloc} & locality of Hamiltonian derivatives\\
      \asHsym & p. \pageref{as:asHsym} & symmetries of Hamiltonian \\
      {\bf F} & p. \pageref{as:asF} & configuration independent distribution \\
      {\bf U} & p. \pageref{as:asU} & locality of the repulsive potential\\
      \asHemb & p. \pageref{as:asHemb} & connection between the Hamiltonians of two embedded systems\\
      {\bf D} & p. \pageref{as:asD} & homogeneity of the reference configuration outside a defect core
    \end{tabular}
  \end{minipage}
\end{center}

\section{Tight binding model for finite systems} \label{sec-tb-finite}
\setcounter{equation}{0}
We begin by formulating a general tight binding model for a finite system with
$N$ atoms. Let $\Lambda_N$ be an index set with $\#\Lambda_N = N$. An atomic
configuration is described by a map $y : \Lambda_N\rightarrow\R^d$ with
$d \in \N$ denoting the space dimension. (We admit $d \neq 3$ mostly for the
sake of mathematical generality; e.g., this allows us to formulate simplified
in-plane or anti-plane models.)

We say that the map $y$ is a {\it proper configuration} if the atoms do not
accumulate:
\begin{flushleft} \label{as:asL}
  \asL. \quad $\exists~\mathfrak{m}>0$ such that
  ~~$|y(\ell)-y(k)|\geq\mathfrak{m}~~\forall~\ell,k\in\Lambda_N$.
\end{flushleft}
Let $\mathcal{V}^N_{\mathfrak{m}}\subset\big(\mathbb{R}^d\big)^{\Lambda_N}$
denote the subset of all $y \in (\R^d)^{\Lambda_N}$ satisfying \asL.

\subsection{The Hamiltonian matrix} \label{sec-tb-finite-hamiltonian}
In the tight binding formalism one constructs a Hamiltonian matrix $\mathcal{H}$
in an ``atomic-like basis set''
$\{\phi_{\ell\alpha}({\bf r}-y(\ell))\}_{\ell\in\Lambda_N,\alpha\in\Xi}$,
\begin{eqnarray}\label{tb-H-elements-abstract}
  \Big(\mathcal{H}(y)\Big)_{\ell k}^{\alpha\beta} = 
  \int_{\mathbb{R}^d} \phi_{\ell\alpha}({\bf r}-y(\ell))\widehat{\mathcal{H}}(y)
  \phi_{k\beta}({\bf r}-y(k)) \dd {\bf r},
\end{eqnarray}
where $\Xi$ is a small collection of the atomic orbitals per atom (with maximum
size $n_{\Xi}$), and the exact many-body Hamiltonian operator
$\widehat{\mathcal{H}}$ is usually replaced by a parametrized one.
The entries of the Hamiltonian matrix $\mathcal{H}$ depend on the atomic
species, atomic orbitals and on the configuration of nuclei. In practice, they
are often described by empirical functions ({\it empirical tight binding}) which
have been calibrated using experimental results or results from first principle
calculations.

In either case, we can write the Hamiltonian matrix elements as
\begin{eqnarray}\label{tb-H-elements-pre}
  \Big(\mathcal{H}(y)\Big)_{\ell k}^{\alpha\beta}=h_{\ell k}^{\alpha\beta}(y),
\end{eqnarray}
where
$h_{\ell k}^{\alpha\beta}:\mathcal{V}^N_{\mathfrak{m}}\rightarrow\mathbb{R}$ are
functions depending on $\ell,~k,~\alpha$ and $\beta$.

% We note that the dependence of $h_{\ell k}^{\alpha\beta}$ on the
% indexes is affected by the atomic species and atomic orbitals.

The orbital indices $\alpha$, $\beta$ do not bring any additional insight into
the problem we are studying, while at the same time complicating the notation.
Therefore, we ignore the indices $\alpha$, $\beta$, which is equivalent to
assuming that there is one atomic orbital for each atomic site ($n_{\Xi}$=1).
The Hamiltonian matrix elements then simply become
\begin{eqnarray}\label{tb-H-elements}
\Big(\mathcal{H}(y)\Big)_{\ell k}=h_{\ell k}(y),
\end{eqnarray}
All our results can be generalized to cases with $n_{\Xi}>1$ without
difficulty. The only required modification is outlined in
\ref{sec-appendix-multiorbital}.

We make the following standing assumptions on the functions $h_{\ell k}(y)$,
which we justify below in Remark \ref{rem:discussion_of_HX} and Examples
\ref{example-2.1}, \ref{example-2.2}. Briefly, these assumptions are
  consistent with most tight binding models, with the only exception that we
  assume that Coulomb interactions are screened.

  \begin{flushleft} \asHtb. \label{as:asHtb}  There exist positive
    constants $\bar{h}_0$ and $\gamma_0$ such that, for any
    $y\in\mathcal{V}^N_{\mathfrak{m}}$,
\begin{eqnarray}\label{h-decay-0}
|h_{\ell k}(y)| \leq\bar{h}_0 e^{-\gamma_0|y(\ell)-y(k)|} \quad\forall~\ell,k\in\Lambda_N.
\end{eqnarray}
\end{flushleft}

\begin{flushleft} \asHloc. \label{as:asHloc}
  There exists $\mathfrak{n}\geq 4$ such that
  $h_{\ell k}\in C^{\mathfrak{n}}(\mathcal{V}^N_{\mathfrak{m}})$.
  % 
  % \textcolor{blue}{\it (in analysis of QM/MM coupling, we may need
  % $\mathfrak{n}\geq 6$)}
  % CO: but in this paper we don't need it.  ok to remove?
  % 
  Moreover, there exist positive constants $\bar{h}_j$ and $\gamma_j$ for
  $1\leq j\leq\mathfrak{n}$, such that
  \begin{eqnarray}\label{h-decay-j}
    \left|\frac{\partial^j h_{\ell k}(y)}{\partial
    [y(m_1)]_{i_1}\cdots\partial 
    [y(m_j)]_{i_j}}\right| \leq \bar{h}_j 
    e^{-\gamma_j\sum_{l=1}^j\left(|y(\ell)-y(m_l)|+|y(k)-y(m_l)|\right)} 
    \quad\forall~\ell,k\in\Lambda_N
  \end{eqnarray}
  with $m_1,\cdots,m_j\in\Lambda_N$ and $1\leq i_1,\dots,i_j\leq d$.
\end{flushleft}

\begin{flushleft} \asHsym. \label{as:asHsym}
  {\bf (i)} {\it (Isometry invariance)} If $y\in\mathcal{V}_{\mathfrak{m}}^N$
  and $g:\R^d\rightarrow\R^d$ is an isometry, % on $\R^d$, 
  then
	% for any finite subsystem $\Omega\subset A\Z^d$, the Hamiltonians on $\Omega$ satisfy
	\begin{eqnarray}\label{assumption-same-species-isometry}
		h_{\ell k}(y) = h_{\ell k}(g(y)) \quad\forall~\ell,k\in\Lambda_N.
	\end{eqnarray}
		
	{\bf (ii)} {\it (Permutation invariance)} If
	$y\in\mathcal{V}_{\mathfrak{m}}^N$ and
	$\mathcal{G}:\Lambda_N\rightarrow\Lambda_N$ is a permutation (relabelling)
	of $\Lambda_N$, then
	% and $y'=y\circ\mathcal{G}$,
	% and $y'(\ell)=y(\mathcal{G}(\ell))$ for all $\ell\in A\Z^d$,
	% then for any finite subsystem $\Omega\subset \Lambda$, 
	% then the Hamiltonians on $\Omega$ and $\mathcal{G}^{-1}(\Omega)$ satisfy
	\begin{eqnarray}\label{assumption-same-species-permutation}
		h_{\ell k}(y) = h_{\mathcal{G}^{-1}(\ell)\mathcal{G}^{-1}(k)}(y\circ\mathcal{G}) 
		\quad\forall~\ell,k\in\Lambda_N.
	\end{eqnarray}
\end{flushleft}

\begin{remark} \label{rem:discussion_of_HX} (i) Condition \eqref{h-decay-0}
  indicates that all the matrix elements are bounded by $\bar{h}_0$, which is
  independent of the system size. This is reasonable under the assumption {\bf
    L} and that the number of atomic orbitals per atom in $\Xi$ remains bounded
  as the number of atoms $N$ increases.
  % % CO: interesting comment, but maybe not needed here? It would 
  %%%%%% need more explanation if you want to keep it.
  % This is different from the case of finite element or spectral
  % approximations of a fixed system, where the atom number is fixed
  % while the number of basis functions increases.
	
  (ii) The condition \asHtb postulates exponential decay of the matrix elements
  with respect to the nuclei distance $|y(\ell)-y(k)|$. This is true in all
  tight binding models; as a matter of fact, most formulations employ a finite
  cut-off (zero matrix elements beyond a finite range of internuclear distance).

  (iii) When $j=1$ in {\bf H.loc}, the condition \eqref{h-decay-j}
  becomes
  \begin{eqnarray}\label{h-decay-1}
    \left|\frac{\partial h_{\ell k}(y)}{\partial [y(m)]_i}\right| 
    \leq \bar{h}_1 e^{-\gamma_1(|y(\ell)-y(m)|+|y(k)-y(m)|)} 
    \quad\forall~\ell,k\in\Lambda_N
  \end{eqnarray}
  with $1\leq i\leq d$.
  % The condition \eqref{h-decay-1} is a little bit more restrictive,
  % which
  This states that there is no long-range interactions in the models, so that
  the dependence of the Hamiltonian matrix elements $h_{\ell k}(y)$ on site $m$
  decays exponentially fast to zero.  This assumption is reasonable if one
  assumes that Coulomb interactions are screened.
  
  % A rigorous treatment of long-range Coulomb interactions will be discussed in
  % future works.

  (iv) In most tight binding models, the atomic orbitals are not orthogonal,
  which gives rise to an overlap matrix
    \begin{eqnarray}\label{matrix-overlap}
      \Big(\mathcal{M}(y)\Big)_{\ell\alpha k\beta} = 
      \int_{\mathbb{R}^d} \phi_{\ell\alpha}({\bf r}-y(\ell))\phi_{k\beta}({\bf r}-y(k)) d{\bf r}.
    \end{eqnarray}
    (In empirical tight binding models, $\mathcal{M}$ may again be given in
    functional form.)

    On transforming the Hamiltonian matrix from a non-orthogonal to an
    orthogonal basis by taking the transformed Hamiltonian
    \begin{eqnarray*}
      \widetilde{\Ham}=\mathcal{M}^{-1/2}\Ham\mathcal{M}^{-1/2},
    \end{eqnarray*}
    we obtain again the identity as overlap matrix. Moreover, following the
    arguments in \cite{benzi13} it is easy to see that, if $\mathcal{M}$ has an
    exponential decay property analogous to \eqref{h-decay-0}, then so does
    $\mathcal{M}^{-1/2}$.  Thus, we see that the decay properties in \asHtb
    and {\bf H.loc} are not lost by this transformation and we can, without loss of
    generality, ignore the overlap matrix.  
    
    (v) We have opted to work with an isolated system, however, it would be
    equally possible to employ periodic boundary conditions. In this case, tight
    binding models employ Bloch sums to take into account the periodic images;
    see, e.g., the Slater--Koster formalism \cite{slater54}. Our entire analysis
    can be easily adapted to this case as well, and the resulting thermodynamic
    limit model would be identical to the one we obtain.
    
    % To see this, we briefly review the periodic formulation:
    
    % In the Slater--Koster formalism \cite{slater54}, the atomic-like basis
    % functions are replaced by
    % \begin{eqnarray*}
    %   \phi^{\bf k}_{\ell\alpha}({\bf r}) = N_{\rm c}^{-1/2}\sum_n e^{i{\bf k}\cdot{\bf R}_n}
    %   \phi_{\ell\alpha}({\bf r}-y(\ell)-{\bf R}_n),
    % \end{eqnarray*}
    % where ${\bf k}$ is the Bloch wavevector, ${\bf R}_n$ denote the lattice
    % vectors for the unit cell with index $n$, and $N_{\rm c}$ the number of unit cells
    % in the sum.  The Hamiltonian matrix elements can be written in the form
    % \begin{eqnarray*}
    %   \mathcal{H}_{\ell\alpha k\beta}^{\bf k}(y) = \sum_n e^{i{\bf k}\cdot{\bf R}_n}
    %   \int \phi_{\ell\alpha}({\bf r}-y(\ell)-{\bf R}_n)\widehat{\mathcal{H}}(y)
    %   \phi_{k\beta}({\bf r}-y(k)) d{\bf r},
    % \end{eqnarray*}
    % where the translation symmetry of the lattice has been used to remove one of
    % the sums over the lattice vectors.
    % % 
    % We observe that if the size of supercell is sufficiently large, then the
    % assumptions \asHtb and {\bf H.loc} and the corresponding comments still
    % stand by replacing $|y(\ell)-y(k)|$ with the periodic distance
    % \begin{displaymath}
    %   |y(\ell)-y(k)|_L:=\min_{n}|y(\ell)-y(k)-{\bf R}_n|.
    % \end{displaymath}

    % This is precisely the case as the supercell converges to the lattice, in
    % which case, 
    
    % This is precisely the case for studying crystalline defects, where the
    % supercell has to be chosen large enough to isolate the defects from their
    % images.

    (vi) {\bf H.sym (i)}, invariance of the Hamiltonian under isometries of
    deformed space, is true in the absence of an external (electric or magnetic)
    field.  E.g., with $g(x)=x+c$, with some $c\in\R^d$, {\bf (i)} implies
    translation invariance of the Hamiltonian.
    {\bf H.sym (ii)} indicates that all atoms of the system belong to the same
    species so that the relabelling of the indices only gives rise to
    permutations of the rows and columns of the Hamiltonian.
    
    The symmetry assumptions {\bf H.sym} are natural and represent no restriction
    of generality. We require them to establish analogous symmetries in the site
    energies that we define in \S~\ref{sec-site-energy}. We remark, however,
    that {\bf H.sym} must be modified for multiple atomic orbitals per site; see
    \ref{sec-appendix-multiorbital}.
\end{remark}

\begin{example}\label{example-2.1} 
  Many tight binding models use the `two-centre approximation'
  \cite{goringe97}, assuming that $h_{\ell k}(y)$ depends only on the
  vector between two atoms $y(\ell)$ and $y(k)$. If we only take into
  accounts the nearest neighbour interactions, then the Hamiltonian
  matrix elements of such models are given by:
  \begin{eqnarray}\label{tb-H-elements-ex2.1}
    \Big(\mathcal{H}(y)\Big)_{\ell k}=\left\{ \begin{array}{ll}
        a_{\ell} & {\rm if}~\ell=k; \\[1ex]
        b_{\ell k}(y(\ell)-y(k)) & {\rm if}~y(\ell)~{\rm
          is~the~nearest~neighbor~of}~y(k); \\[1ex ]
        0 & {\rm otherwise},
      \end{array} \right.
  \end{eqnarray}
  where $a_{\ell}$ are constants and $b_{\ell k}$ are smooth
  functions. We observe that all our assumptions in \asHtb and {\bf
    H.loc} are trivially satisfied for this simple but common model.
\end{example}

\begin{example}\label{example-2.2}
  The Hamiltonian of a reduced Hartree--Fock model with the Yukawa
  potential~\cite{yukawa35} is
\begin{eqnarray}
\widehat{\mathcal{H}}(y)=-\frac{1}{2}\Delta-\sum_{\ell\in\Lambda_N}
Y_m(\cdot-y(\ell))+\int_{\mathbb{R}^d}\rho(x)Y_m(\cdot-x)dx,
\end{eqnarray}
where $\rho$ is assumed to be a fixed electron density, and $Y_m$ is the Yukawa
kernel with parameter $m>0$:
\begin{eqnarray}\label{yukawa-kernel}
Y_m(x)=\left\{ \begin{array}{ll}
m^{-1}e^{-m|x|} & {\rm if}~d=1; \\[1ex]
\int_0^{\infty} e^{-m|x|\cosh t}dt & {\rm if}~d=2; \\[1ex]
|x|^{-1}e^{-m|x|} & {\rm if}~d=3.
\end{array}\right.
\end{eqnarray}
Note that both $Y_m$ and its derivatives decay to 0 exponentially fast.
If the basis functions $\{\phi_{\ell\alpha}\}_{\ell\in\Lambda_N,\alpha\in\Xi}$
are localized, i.e. the atomic orbitals for the $\ell$th atom have compact
support around $y(\ell)$, or decay exponentially, then we have that the matrix
elements generated by \eqref{tb-H-elements-abstract} satisfy the assumptions in \asHtb and {\bf H.loc}.
\end{example}

As a consequence of our assumptions, the following lemma states that the
sepctrum of the Hamiltonian is uniformly bounded with respect to the system size
$N$.

\begin{lemma}\label{lemma-H-bound}
  For any $\Lambda_N$ satisfying \asL and \asHtb, there exist constants
  $\underline{\sigma}$ and $\overline{\sigma}$ depending only on
  $\mathfrak{m},\bar{h}_0, \gamma_0$ and $d$, such that, for all
  $y \in \mathcal{V}_{\frak{m}}^N$,
  \begin{eqnarray}\label{assump-spectrum}
    \sigma(\mathcal{H}(y))\subset [\underline{\sigma},\overline{\sigma}].
  \end{eqnarray}
\end{lemma}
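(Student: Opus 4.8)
The plan is to bound the operator norm $\|\mathcal{H}(y)\|$ uniformly in $N$ and in $y\in\mathcal{V}^N_{\mathfrak{m}}$, and then simply take $\overline{\sigma} = \|\mathcal{H}(y)\|$ and $\underline{\sigma} = -\|\mathcal{H}(y)\|$; since $\mathcal{H}(y)$ is a real symmetric matrix, this sandwiches $\sigma(\mathcal{H}(y))$. First I would use that, for a symmetric matrix, the spectral norm is controlled by the maximum absolute row sum,
\[
  \|\mathcal{H}(y)\| \leq \max_{\ell\in\Lambda_N} \sum_{k\in\Lambda_N} |h_{\ell k}(y)|,
\]
which is the Schur test (equivalently, $\|A\|_2 \le \sqrt{\|A\|_1\|A\|_\infty} = \|A\|_\infty$ for symmetric $A$). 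By \asHtb this reduces the problem to a uniform bound on $\sum_{k} e^{-\gamma_0|y(\ell)-y(k)|}$, with the prefactor $\bar h_0$ pulled out.

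The key geometric input is \asL. Since the points $\{y(k)\}$ are $\mathfrak{m}$-separated, the balls $B_{\mathfrak{m}/2}(y(k))$ are pairwise disjoint, so a volume-packing argument bounds the number of indices $k$ with $y(k)$ in a spherical shell $\{x : r \le |x-y(\ell)| < r+\mathfrak{m}\}$: their small balls lie in the slightly enlarged shell, hence the count is at most $\mathrm{vol}(\text{enlarged shell})/\mathrm{vol}(B_{\mathfrak{m}/2}) \lesssim (1 + r/\mathfrak{m})^{d-1}$, a bound depending only on $\mathfrak{m}$ and $d$. Summing over shells indexed by $j \ge 0$ with $r = j\mathfrak{m}$ then gives
\[
  \sum_{k\in\Lambda_N} e^{-\gamma_0|y(\ell)-y(k)|}
  \;\leq\; C(\mathfrak{m},d)\sum_{j=0}^{\infty} (1+j)^{d-1} e^{-\gamma_0 j\mathfrak{m}}
  \;=:\; C_0(\mathfrak{m},\gamma_0,d) \;<\; \infty,
\]
the series converging because exponential decay dominates polynomial growth. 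Therefore $\|\mathcal{H}(y)\| \le \bar h_0\, C_0(\mathfrak{m},\gamma_0,d)$ for all admissible $y$ and all $N$, and we may take $\overline{\sigma} = -\underline{\sigma} = \bar h_0\, C_0(\mathfrak{m},\gamma_0,d)$, which depends only on $\mathfrak{m}, \bar h_0, \gamma_0$ and $d$ as claimed.

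There is no real obstacle here: the only nontrivial ingredient is the packing estimate for $\mathfrak{m}$-separated point sets, which is entirely routine, and the symmetry of $\mathcal{H}(y)$ makes the passage from row sums to the spectral bound immediate. The content of the lemma is precisely that exponentially decaying, uniformly separated interactions yield a matrix of bounded norm, and the argument above keeps all constants manifestly independent of the system size.
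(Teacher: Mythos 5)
Your proof is correct and follows essentially the same route as the paper: the paper invokes Ger\v{s}gorin's theorem to get the maximum-row-sum bound (your Schur-test step for a symmetric matrix gives the same quantity), and then uses \asL to bound $\sum_{k}e^{-\gamma_0|y(\ell)-y(k)|}$ uniformly in $N$ and $y$, exactly as in your packing argument. The only difference is cosmetic: you spell out the shell-counting estimate that the paper compresses into the single constant $C_d\bar h_0/(\mathfrak{m}\gamma_0)^d$.
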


\begin{proof}
	Using \eqref{h-decay-0} and the Ger\v{s}gorin's theorem \cite{horn91}, we have
	\begin{eqnarray}
	|\lambda_i| \leq \max_{\ell\in\Lambda_N}\left( |h_{\ell\ell}(y)| 
	+ \sum_{k\in\Lambda_N,~k\neq\ell}|h_{\ell k}(y)| \right) \leq 
	\bar{h}_0\max_{\ell\in\Lambda_N}\left(\sum_{k\in\Lambda_N}e^{-\gamma_0|y(\ell)-y(k)|}\right)
	\end{eqnarray}
	for any $i$.
	This together with the assumption \asL implies that
	\begin{eqnarray}
	|\lambda_i|\leq \bar{h}_0\max_{\ell\in\Lambda_N}\left(\sum_{k\in\Lambda_N} 
	e^{-\gamma_0\mathfrak{m}|y(\ell)-y(k)|}\right) \leq  \frac{C_d\bar{h}_0}{(\mathfrak{m}\gamma_0)^d}
	\end{eqnarray}
	for any $i$, where $C_d$ is a constant depending only on the dimension $d$.
\end{proof}

\subsection{Band energy} \label{sec-e-f-H}
The total energy of of a configuration $y \in \mathcal{Y}_\mathfrak{m}^N$ is written
as the sum of band energy and repulsive energy,
\begin{eqnarray} \label{e-tot}
  E^{\rm tot}(y)=E^{\rm band}(y)+E^{\rm rep}(y),
\end{eqnarray}
which we define as follows. For simplicity of notation, we will write
$E=E^{\rm band}$ throughout this paper.

Given a deformation $y \in \mathcal{V}_{\frak{m}}$, the associated Hamiltonian
matrix $\mathcal{H}(y)$, and its eigenvalues $\epsilon_s$ and eigenvectors
$\psi_s$, $s = 1, \dots, N$ (allowing for multiplicity),
\begin{eqnarray}\label{eigen-H}
\mathcal{H}(y)\psi_s = \varepsilon_s\psi_s\quad s=1,2,\cdots,N,
\end{eqnarray}
the band energy of the system is defined by
\begin{equation}\label{e-band}
  E(y) = \sum_{s = 1}^N f(\varepsilon_s)\varepsilon_s
  = \sum_{s = 1}^N \mathfrak{f}(\varepsilon_s),
\end{equation}
where $f$ depends on the physical context. For example, at finite electronic
temperature, $f$ is the is Fermi-Dirac function
\begin{eqnarray}\label{fermi-dirac}
  f(\varepsilon) = \bigg( 1+e^{(\varepsilon-\mu)/(k_{\rm B}T)} \bigg)^{-1},
\end{eqnarray}
and $\mu$ is a fixed chemical potential (more on that below). In the
zero-temperature limit, $f$ becomes a step function.  In practical simulation of
conductors, $f$ is often a smearing function (i.e., a numerical parameter) to
ensure numerical stability (see, e.g. \cite{fu83,kresse96,motamarri13}.

In the present work, we shall not be too concerned about the origin of
$\mathfrak{f}$, but simply accept it as a model parameter. Our analysis can be
carried out whenever $f$ is analytic (e.g., the Fermi-Dirac distribution) or, in
insulators (systems with band gap at $\mu$) also when $f$ is a step
function. For the sake of a unified presentation we shall only present the first
case, but it will be immediately apparent how to treat insulators as well. Thus
we shall assume for the remainder of the paper that

\begin{flushleft} {\bf F}. \label{as:asF}
   \quad $f$ is a configuration independent analytic function in an
  open neighbourhood $D_f \subset \mathbb{C}$ of
  $[\underline{\sigma},\overline{\sigma}]$; cf. Lemma \ref{lemma-H-bound}.
\end{flushleft}

\begin{remark}
  (i) The qualifier ``configuration independent'' in {\bf F} essentially
  rephrases the assumption that the chemical potential $\mu$ is independent of
  the configuration $y$. This is false in general, but a reasonable assumption
  in our context since, in the next section, we shall consider limits of finite
  bodies in the form of lattices that are only locally distorted by defects. It
  is well-known (though we are unaware of a rigorous proof) that the limiting
  potential $\mu$ is indeed configuration independent, but is only a function of
  the far-field homogeneous lattice state.
  
  (ii) We note, though, that there is a simple model in which the Fermi-level is
  indeed independent of the configuration. Consider a single-species two-centre
  approximation where $h_{\ell k}(y) = h(|y(\ell) - y(k)|)$ and
  $h_{\ell\ell}(y) \equiv a_0$. Then it is easy to see that the spectrum is
  symmetric about $a_0$ and hence the Fermi level is always $\mu = a_0$.
\end{remark}

The repulsive component of the energy is empirical and in most of the cases is simply
described by a pair potential interaction
\begin{eqnarray}\label{e-rep}
  E^{\rm rep}(y) = \frac{1}{2}\sum_{\ell,k\in\Lambda_N,~\ell\neq k}
  U_{\ell k}\big(y(\ell)-y(k)\big),
\end{eqnarray}
where $U_{\ell k}$ is an empirical repulsive energy acting between atoms on
$y(\ell)$ and $y(k)$. For future reference, we rewrite this in site-energy form,
\begin{eqnarray}\label{e_rep_l}
  E^{\rm rep}(y) = \sum_{\ell \in \Lambda_N} E_\ell^{\rm rep}(y), \qquad
  E_\ell^{\rm rep}(y) = \frac12 \sum_{k\in \Lambda_N,~k \neq \ell} 
  U_{\ell k}\big(y(\ell)-y(k)\big),
\end{eqnarray}
and we shall assume throughout that
\begin{flushleft}
  {\bf U}. \label{as:asU}
  \quad $U_{\ell k} \in C^\frak{n}(\R^d \setminus B_{\frak{m}})$ and there exist
  $c_{U}, \gamma_U > 0$ such that 
  \begin{eqnarray}
  |\D^j U_{\ell k}({\bf r})| \leq c_U \exp(-\gamma_U |{\bf r}|) \qquad\forall~\ell,k\in\Lambda_N
  \end{eqnarray}
  for $0 \leq j \leq \frak{n}$.
\end{flushleft}

In most of our analysis we shall only be concerned with the band energy
$E$, and have added $E^{\rm rep}$ mostly for the sake of
completeness. The pair interaction in $E^{\rm rep}$ may be replaced with an
arbitrary short-ranged interatomic potential.

\subsubsection{Representation via contour integrals}
\label{sec:finite-contour}
Our analysis of the locality of interaction generated by the tight binding model
builds on a representation of $E$ in terms of contour integrals.  The main issue
is to represent the electronic density matrix as an operator-valued function of
the Hamiltonian.  This technique has been used in quantum chemistry, for example
\cite{e10, goedecker95} for tight binding and
\cite{cances14,e11,goedecker93,lin09,yang95} for density functional theory.
% In TB literature ***. in the context of DFT analogous techniques have been used in BLAH. ***}

We begin by defining, for any proper configuration
$y\in\mathcal{V}^N_{\mathfrak{m}}$, the electronic density matrix (or simply,
{\em density matrix}) of the system,
\begin{eqnarray}\label{density-matrix}
\Gamma(y)=\sum_s f(\varepsilon_s)|\psi_s\rangle\langle\psi_s|
=f(\mathcal{H}(y)).
\end{eqnarray}
The band energy can then equivalently be written as
\begin{eqnarray}
  E(y)={\rm Tr}\big(\mathcal{H}(y)\Gamma(y)\big)
  ={\rm Tr}\big(\mathcal{H}(y)f(\mathcal{H}(y))\big)
  ={\rm Tr}\big(\mathfrak{f}(\mathcal{H}(y))\big).
\end{eqnarray}
Lemma \ref{lemma-H-bound} and {\bf F} imply that we can find a bounded
contour $\mathscr{C} \subset D_f$, circling all the eigenvalues
$\varepsilon_s$ on the real axis% , and remaavoids the intersection with the
% non-analytic region $\mathfrak{s}(\mathfrak{f})$ at the same time
(see Figure \ref{fig-contour}), and satisfies
\begin{eqnarray}\label{dist-c}
  \min\left\{ {\rm dist}\big(\mathscr{C},\sigma(\mathcal{H}(y))\big),
    {\rm dist}\big(\mathscr{C},\mathfrak{s}(\mathfrak{f})\big) \right\} \geq \mathfrak{d},
\end{eqnarray}
with a constant $\mathfrak{d}>0$ that is independent of $y$ or of $N$.
Let
\begin{eqnarray*}
\rz{y}:=\big(\mathcal{H}(y)-zI\big)^{-1}
\end{eqnarray*}
denote the resolvent of $\Ham(y)$, then
\begin{eqnarray}\label{fH-contour}
\mathfrak{f}(\mathcal{H}(y))=-\frac{1}{2\pi i}\oint_{\mathscr{C}}\mathfrak{f}(z)\rz{y}\dd z,
\end{eqnarray}
which implies that
\begin{eqnarray}\label{e-contour}
E(y)=-\frac{1}{2\pi i}\oint_{\mathscr{C}}\mathfrak{f}(z){\rm Tr}\Big(\rz{y}\Big)\dd z.
\end{eqnarray}

\begin{figure}[ht]
	\centering
	\includegraphics[width=10.0cm]{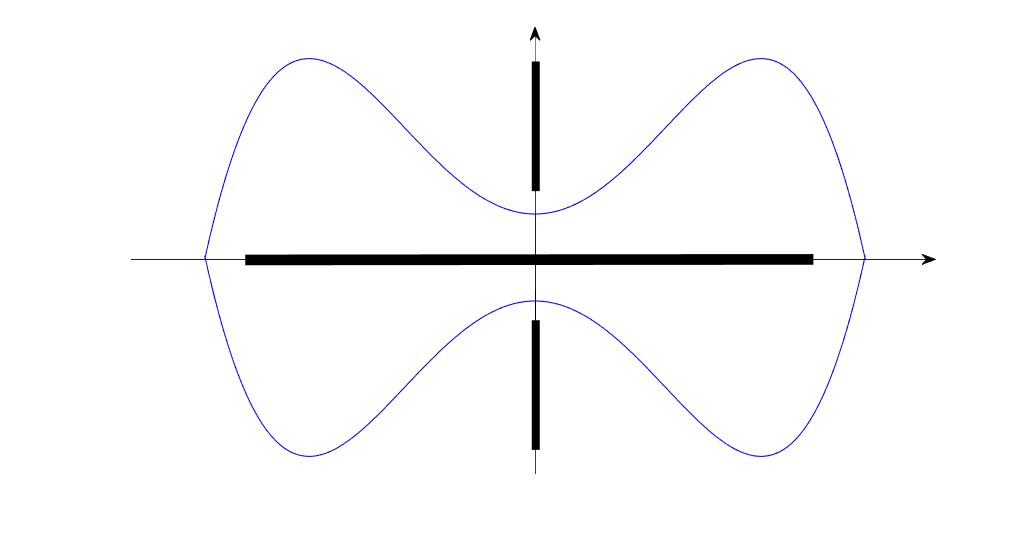}
	\put(-230,110){$\mathscr{C}$}
	\put(-132,110){non-analytic}
	\put(-120,62){spectrum}
	\caption{A schematic plot of the dumbbell-shaped Cauchy contour $\mathscr{C}$.}
	\label{fig-contour}
\end{figure}

It is already clear from \eqref{e-contour} that the locality of the resolvents
will play an important role in our analysis. Hence, we prove a decay estimate in
the next lemma.

\begin{lemma}\label{lemma-resolvant-decay}
  Let $\mathcal{H}(y), y \in \mathcal{V}_{\frak{m}}^N$ be a tight binding
  Hamiltonian of the form \eqref{tb-H-elements} and $\mathscr{C}$ a contour
  satisfying \eqref{dist-c}. If \asL and \asHtb are satisfied, then there exist
  constants $\gamma_{\rm r}>0$ and $c_{\rm r}$, independent of $y$ or $N$, such
  that
  \begin{eqnarray}\label{resolvant-decay}
    \Big(\rz{y}\Big)_{\ell k}\leq c_{\rm r}e^{-\gamma_{\rm r}|y(\ell)-y(k)|} 
    \quad\forall~z\in\mathscr{C}.
  \end{eqnarray}
\end{lemma}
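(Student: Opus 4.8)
The plan is to exploit the exponential off-diagonal decay of $\mathcal{H}(y)$ together with the spectral gap $\dis{\mathscr{C}}{\sigma(\mathcal{H}(y))} \geq \mathfrak{d}$ encoded in \eqref{dist-c}. First I would observe that $\|\rz{y}\| \leq 1/\mathfrak{d}$ uniformly in $z \in \mathscr{C}$ and in $y, N$, since the distance from $z$ to the spectrum is at least $\mathfrak{d}$; this gives the crude bound $|(\rz{y})_{\ell k}| \leq 1/\mathfrak{d}$ and handles small $|y(\ell)-y(k)|$ for free. The work is therefore to extract exponential decay for large internuclear separation.

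The main step is a Combes--Thomas type argument. Fix a target pair $\ell, k$ and set $R = |y(\ell)-y(k)|$. Introduce the diagonal "gauge" matrix $e^{\eta \Phi}$ where $\Phi = \mathrm{diag}\big(\phi(m)\big)_{m \in \Lambda_N}$ with, say, $\phi(m) = |y(m)-y(k)|$ (a $1$-Lipschitz function of the positions) and $\eta > 0$ a small parameter to be chosen. Conjugate the resolvent identity to study $\mathcal{H}_\eta := e^{\eta \Phi}\mathcal{H}(y) e^{-\eta\Phi}$, whose entries are $\big(\mathcal{H}_\eta\big)_{mn} = e^{\eta(\phi(m)-\phi(n))} h_{mn}(y)$. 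Since $|\phi(m)-\phi(n)| \leq |y(m)-y(n)|$ by the triangle inequality, assumption \asHtb gives $|\big(\mathcal{H}_\eta - \mathcal{H}(y)\big)_{mn}| \leq \bar h_0 (e^{\eta|y(m)-y(n)|}-1) e^{-\gamma_0|y(m)-y(n)|}$, and the Gershgorin/row-sum estimate exactly as in Lemma \ref{lemma-H-bound} (now with exponent $\gamma_0 - \eta$ instead of $\gamma_0$, valid for $\eta < \gamma_0$) bounds the perturbation operator norm by $C(\eta)$ with $C(\eta) \to 0$ as $\eta \to 0$. Choosing $\eta = \eta(\mathfrak{d}, \gamma_0, \bar h_0, \mathfrak{m}, d)$ small enough that $C(\eta) \leq \mathfrak{d}/2$, a Neumann series shows $\mathcal{H}_\eta - zI$ is invertible for all $z \in \mathscr{C}$ with $\|(\mathcal{H}_\eta - zI)^{-1}\| \leq 2/\mathfrak{d}$. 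But $(\mathcal{H}_\eta - zI)^{-1} = e^{\eta\Phi}\rz{y}e^{-\eta\Phi}$, so reading off the $(\ell,k)$ entry gives $e^{\eta(\phi(\ell)-\phi(k))}(\rz{y})_{\ell k} = e^{\eta R}(\rz{y})_{\ell k}$ bounded by $2/\mathfrak{d}$, i.e. $|(\rz{y})_{\ell k}| \leq (2/\mathfrak{d}) e^{-\eta R}$. Setting $\gamma_{\rm r} = \eta$ and $c_{\rm r} = 2/\mathfrak{d}$ (or $\max\{2/\mathfrak{d}, 1/\mathfrak{d}\}$ to also absorb the crude bound) finishes the proof, and all constants depend only on $\mathfrak{m}, \bar h_0, \gamma_0, d$ and $\mathfrak{d}$, hence not on $y$ or $N$.

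The one technical point to be careful about is the uniform-in-$N$ control of the row sums $\sum_{n} e^{-(\gamma_0-\eta)|y(m)-y(n)|}$ and $\sum_n (e^{\eta|y(m)-y(n)|}-1)e^{-\gamma_0|y(m)-y(n)|}$: this is where assumption \asL (non-interpenetration, minimum separation $\mathfrak{m}$) enters, via the standard packing estimate $\#\{n : |y(m)-y(n)| \in [t, t+1)\} \lesssim_d (t/\mathfrak{m}+1)^{d-1}$, exactly as used in Lemma \ref{lemma-H-bound}. The main obstacle, if any, is purely bookkeeping: ensuring the threshold on $\eta$ and the resulting constants are genuinely independent of $N$ and of the particular configuration $y \in \mathcal{V}_\mathfrak{m}^N$, which the packing estimate guarantees. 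No spectral information beyond the gap $\mathfrak{d}$ is needed, so the argument is robust and will be reused (with the same gauge function) for the derivative estimates later in the paper.
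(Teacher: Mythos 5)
Your proposal is correct and follows essentially the same Combes--Thomas conjugation argument as the paper: a diagonal gauge $e^{\eta\Phi}$ with $\phi(m)=|y(m)-y(k)|$ (the paper's matrix $B$ with $k_0=k$), a row-sum bound on the conjugated perturbation made small by shrinking the decay rate, a Neumann series to invert $\mathcal{H}_\eta - zI$ with norm $2/\mathfrak{d}$, and reading off the $(\ell,k)$ entry. The only cosmetic difference is that the paper passes from the row-sum ($\ell^\infty$) bound to the $\ell^2$ operator norm explicitly via interpolation with the $\ell^1$ bound (Schur's test), a step you should make explicit since the perturbation is not symmetric, but your symmetric entrywise bound supplies the needed column-sum estimate, so nothing is missing in substance.
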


\begin{proof}
  This proof relies on the arguments provided by \cite{e10} and a Combes--Thomas
  type estimate~\cite{combes73}.

  For $k_0$ and $\gamma_{\rm r}>0$, let $B \in \R^{\Lambda_N \times \Lambda_N}$,
  \begin{eqnarray}
    B_{\ell k}
    =\left\{ 
    \begin{array}{ll}
      e^{\gamma_{\rm r}|y(\ell)-y(k_0)|}, & {\rm  if}~\ell=k; \\[1ex]
      0, &  {\rm otherwise}.
    \end{array}
           \right.
  \end{eqnarray}
  From this definition,  we have
  \begin{align*}
    \big[B\mathcal{H}B^{-1} - \mathcal{H} \big]_{\ell k}
    &=
      e^{\gamma_{\rm r}|y(\ell)-y(k_0)|} \mathcal{H}_{\ell k} 
      e^{-\gamma_{\rm r}|y(k)-y(k_0)|} - \mathcal{H}_{\ell k} \\
    &= 
      \mathcal{H}_{\ell k} 
      \big(e^{\gamma_{\rm r}(|y(\ell)-y(k_0)|-|y(k)-y(k_0)|)}-1\big).
  \end{align*}
  Assumptions \asL and \asHtb yield
  \begin{eqnarray} \nonumber
    &&   \hspace{-1.5em}
     \big\|B\mathcal{H}B^{-1}-\mathcal{H}\big\|_{\infty} 
       ~\leq~ 
       \sup_{\ell\in \Lambda_N} \sum_{k\in\Lambda_N}|\mathcal{H}_{\ell k}| 
       \Big(e^{\gamma_{\rm r}|y(\ell)-y(k)|}-1\Big) \\[1ex] \nonumber
    &\leq&
           \bar{h}_0 \sup_{\ell\in \Lambda_N} \left( \bigg(
           \sum_{\substack{k\in\Lambda_N, \\ |y(\ell)-y(k)|> R}}
           e^{-(\gamma_0-\gamma_{\rm r}) |y(\ell)-y(k)|} \bigg) 
           + 
           % \bar{h}_0 \sup_{\ell\in \Lambda_N} 
           \bigg(
           \sum_{\substack{k\in\Lambda_N, \\ |y(\ell)-y(k)|\leq R}}
           e^{-\gamma_0 |y(\ell)-y(k)|} \bigg) \big(e^{\gamma_{\rm r} R}-1\big) \right)
    \\[1ex]
    &\leq& 
           C \Big(e^{-\frac{1}{2}(\gamma_0-\gamma_{\rm r})R} + e^{\gamma_{\rm r}R} -1 \Big)
  \end{eqnarray}
  for any $\gamma_{\rm r}<\gamma_0/2$ and $R>0$,
  where $C$ is a constant
  depending only on $\bar{h}_0$, $d$, $\gamma_0$, $\gamma_{\rm r}$, 
  and $\mathfrak{m}$.
  For any $\varepsilon>0$, we can choose $R$ sufficiently large and
  then 
  $\gamma_{\rm r}$ sufficiently small (depending on $R$) such that
  $\|B\mathcal{H}B^{-1}-\mathcal{H}\|_{\infty}<\varepsilon$. We note that 
  the choice of $R$ and $\gamma_{\rm r}$ does not depend on the system size 
  $N$ but only on $\varepsilon$ and the constants
  $\bar{h}_0,~\gamma_0,~\mathfrak{m}$. Similarly, we have the same bound for
  $\|B\mathcal{H}B^{-1}-\mathcal{H}\|_{1}$.  Using interpolation, we get the
  same bound for $\|B\mathcal{H}B^{-1}-\mathcal{H}\|_{2}$.
	
  Note that
  \begin{eqnarray}
    B(z-\mathcal{H})^{-1}B^{-1} &=& (z-B\mathcal{H}B^{-1})^{-1}  \\
    &=& \notag
        (z-H)^{-1}(I-(B\mathcal{H}B^{-1}-\mathcal{H})(z-\mathcal{H})^{-1})^{-1}.
  \end{eqnarray}
  Since \eqref{dist-c} implies
  $\|(z-\mathcal{H})^{-1}\|_{\mathscr{L}(l^2)} ]\leq 1/\mathfrak{d}$, we can
  choose $R$ and $\gamma_{\rm r}$ such that $z-B\mathcal{H}B^{-1}$ is invertible
  and
  \begin{displaymath}
    \|B(z-\mathcal{H})^{-1}B^{-1}\|_{\mathscr{L}(l^2)} \leq \frac{2}{\mathfrak{d}}.
  \end{displaymath}
  Using
  $\big|[B(z-\mathcal{H})^{-1}B^{-1}]_{\ell k}\big| \leq
  \|B(z-\mathcal{H})^{-1}B^{-1}\|_{\mathscr{L}(l^2)} \leq 2 / \frak{d}$ and
  \begin{align*}
    \Big| \big[(z-\mathcal{H})^{-1}\big]_{\ell k} 
    e^{\gamma_{\rm r}(|y(\ell)-y(k_0)|-|y(k)-y(k_0)|)} \Big|
    = 
      \Big| \big[B(z-\mathcal{H})^{-1}B^{-1}\big]_{\ell k} \Big| 
    \leq
      \frac{2}{\frak{d}},
  \end{align*}
  and consequently,
  \begin{eqnarray}
    \left|\big[(z-\mathcal{H})^{-1}\big]_{\ell k}\right|
    \leq 
    \frac{2}{\frak{d}} e^{-\gamma_{\rm r}(|y(\ell)-y(k_0)|-|y(k)-y(k_0)|)}.
  \end{eqnarray}
  Taking $k_0=k$, we obtain the stated exponential decay estimate.
\end{proof}

\subsection{Site energy} \label{sec-site-energy}

Since the tight binding Hamiltonian \eqref{tb-H-elements-abstract} is given in
terms of an atomic-like basis set, we can distribute the energy among atomic
sites. This is a well-known idea, which has been used for constructing
interatomic potentials based on the bond-order concept (see
e.g. \cite{ercolessi05,finnis03,tersoff88}).

Noting that $\|\psi_s \|_{\ell^2} = 1$ for all $s$, we have
\begin{align*}
  E(y) 
  &= \sum_{s = 1}^N f(\varepsilon_s)\varepsilon_s
    =
    \sum_{s = 1}^N f(\varepsilon_s)\varepsilon_s \sum_{\ell \in \Lambda_N} [\psi_s]_\ell^2
  = \sum_{\ell \in \Lambda_N} \sum_{s = 1}^N f(\varepsilon_s)\varepsilon_s [\psi_s]_\ell^2.
\end{align*}
That is, we have obtained the decomposition of the band energy
\begin{align}
  \label{E-El}
  E(y) &= \sum_{\ell\in\Lambda_N}E_{\ell}(y),  \qquad \text{where} \\[0.2em]
  \label{site-energy}
  E_\ell(y) &= \sum_{s} f(\varepsilon_s) \varepsilon_s [\psi_s]_{\ell}^2 
              = \sum_{s}\mathfrak{f}(\varepsilon_s) [\psi_s]_{\ell}^2,
\end{align}
and we call $E_\ell(y)$ the site energy. 

When the atomic orbitals are not orthogonal we slightly modify the definition of
site energy slightly for computational efficiency; see
\ref{sec-appendix-siteoverlap} for detailed discussions.

Such a decomposition is useful, for example, since classical interatomic
potentials almost always decompose the total energy in such a way, hence the
relation \eqref{E-El} can be used to establish a bridge between {\it ab initio}
models and empirical interaction laws \cite{ercolessi05}. For our own purpose,
the decomposition will (1) yield a relatively straightforward thermodynamic
limit argument to define and analyze variational problems on the infinite
lattice along the lines of \cite{ehrlacher13}, and (2) provide a starting point
for the construction and analysis of concurrent multi-scale schemes hybrid
models, which we will pursue in two companion papers \cite{chenpreE, chenpreF}.

Our aim, next, is to establish locality of $E_\ell$.  From now on, for the sake
of readability, we drop the argument $(y)$ in $\rz{y}$, $\mathcal{H}(y)$,
$\mathcal{H}_{,m}(y)$ and $\mathcal{H}_{,mn}(y)$ whenever convenient and
possible without confusion and in addition write $r_{mn} = |y(m) - y(n)|$.
% For simplicity of notations, we will denote $\rz{y}$ by $\rzz$,
% $\mathcal{H}(y)$ by $\mathcal{H}$, $\mathcal{H}_{,m}(y)$ by
% $\mathcal{H}_{,m}$, $\mathcal{H}_{,mn}(y)$ by $\mathcal{H}_{,mn}$, and
% $|y(\ell)-y(k)|$ by $r_{\ell k}$ whenever it is convenient.
%
Let $e_{\ell}$ be the $N$
dimensional canonical basis vector, then we obtain from \eqref{eigen-H} that
\begin{align*}
  E_{\ell}(y) 
  &= \sum_{s}\mathfrak{f}(\varepsilon_s)\left|[\psi_s]_{\ell}\right|^2 
    =\sum_{s}\mathfrak{f}(\varepsilon_s)(\psi_s,e_{\ell})(\psi_s,e_{\ell}) 
    =\sum_{s}\Big(\mathfrak{f}\big(\mathcal{H}\big)\psi_s,e_{\ell}\Big)
    (\psi_s,e_{\ell}) \\[1ex]
  &= \sum_{s}(\psi_s,e_{\ell})\Big(\psi_s,\mathfrak{f}\big(\mathcal{H}\big)e_{\ell}\Big) 
  = \Big(e_{\ell},\mathfrak{f}\big(\mathcal{H}\big)e_{\ell}\Big),
\end{align*}
and employing \eqref{fH-contour} we arrive at
\begin{eqnarray}\label{site-energy-contour}
  E_{\ell}(y) =-\frac{1}{2\pi i}\oint_{\mathscr{C}}\mathfrak{f}(z)\big[\rzz\big]_{\ell\ell}\dd z.
\end{eqnarray}
We can now calculate the first and second derivatives of $E_{\ell}(y)$ based on
\eqref{site-energy-contour} and the regularity assumption in {\bf H.loc}:
\begin{align}
\label{site-force}
\frac{\partial E_{\ell}(y)}{\partial [y(m)]_i} 
  &= \frac{1}{2\pi i} \oint_{\mathscr{C}} \mathfrak{f}(z) 
    \Big[\rzz\left[\mathcal{H}_{,m}\right]_i\rzz\Big]_{\ell\ell}\dd z, 
    \quad \text{and}  \\[1ex] 
  \frac{\partial^2 E_{\ell}(y)}{\partial [y(m)]_i\partial [y(n)]_j} 
  \notag
  &= \frac{1}{2\pi i}\oint_{\mathscr{C}}\mathfrak{f}(z) \bigg[
    \mathscr{R}_z \big[\mathcal{H}_{,mn}\big]_{ij} \mathscr{R}_z
    - \mathscr{R}_z \left[\mathcal{H}_{,m}\right]_i \mathscr{R}_z
    \big[\mathcal{H}_{,n} \big]_j  \mathscr{R}_z \\ \label{site-hessian} 
  & \hspace{13em} 
    - \mathscr{R}_z \left[\mathcal{H}_{,n} \right]_j \mathscr{R}_z
    \big[\mathcal{H}_{,m}\big]_i \mathscr{R}_z \bigg]_{\ell\ell} \dd z.
\end{align}
We also have higher order derivatives of the site energy for
$n\leq\mathfrak{n}$:
\begin{multline}\label{site-higher-derivative}
\frac{\partial^n E_{\ell}(y)}{\partial [y(m_1)]_{i_1}\cdots\partial [y(m_n)]_{i_n}} 
= -\frac{1}{2\pi i}\oint_{\mathscr{C}}\mathfrak{f}(z) \Bigg[\sum_{l=1}^{n}
\sum_{j_1+\cdots+ j_l=n}\sum_{\mathcal{P}_{m_1,\cdots,m_n}^{j_1,\cdots,j_l}} (-1)^l \\[1ex]
Q^{(l)}_z[y]\Big([\mathcal{H}_{,\mathcal{P}m_{1}\cdots\mathcal{P}m_{j_1}}(y)]_{\mathcal{P}i_1\cdots \mathcal{P}i_{j_1}},
\cdots,[\mathcal{H}_{,\mathcal{P}m_{n-j_l+1}\cdots\mathcal{P}m_{n}}(y)]_{\mathcal{P}i_{n-j_l+1}\cdots \mathcal{P}i_n}\Big) 
\Bigg]_{\ell\ell} \dd z, \quad
\end{multline}
where 
$Q^{(n)}_z:\left(\mathbb{R}^{N\times N}\right)^n\mapsto\mathbb{R}^{N\times N}$:
\begin{eqnarray}\label{def-Qk}
Q^{(n)}_z[y](\Theta_1,\cdots,\Theta_n) = \rz{y} \prod_{j=1}^n \Big(\Theta_j \rz{y}\Big)
\end{eqnarray}
is a well-defined linear map for any $z$ satisfying \eqref{dist-c}
and $\mathcal{P}_{m_1,\cdots,m_n}^{j_1,\cdots,j_l}$
is the multiset permutation of $m_1,\cdots,m_n$.

\subsection{Properties of the site energy}
\label{sec:properties_site_finite}
In order for $E_\ell$ to be a ``true'' site energy it must satisfy certain
properties: locality, permutation invariance and isometry invariance. We
establish these next.

First, we establish the locality of the site energy and its derivatives. We
remark that in this result it is important that we are keeping $\mu$
fixed. Admitting $y$-dependent $\mu$ would introduce a small amount of
non-locality in the site-energies, but it is reasonable to expect that this
vanishes in the thermodynamic limit.

 % \cite{chenpre_limit}.  \hc{--not sure
 %  about this reference--}

% \textcolor{blue}{\it (if $\mu$ is not fixed, i.e., if $\mu$ depends on the
%   configuration $y$, then there is an additional term
%   $-\frac{1}{2\pi
%     i}\oint_{\mathscr{C}}\frac{\partial\mathfrak{f}(z)}{\partial\mu}\frac{\partial\mu}{\partial
%     [y_m]_i}\Big[\rz{y}\Big]_{\ell\ell}\dd z$
%   in \eqref{site-force}). I can not prove the decay property like
%   \eqref{site-force-decay-finite} for this term.}

\begin{lemma}[Locality] \label{lemma-locality-site}
  If \asL, \asHtb, {\bf H.loc}, {\bf F} are satisfied then, for
  $1\leq j\leq\mathfrak{n}$, there exist positive constants $C_j$ and $\eta_j$
  such that for any $\ell\in\Lambda_N$,
  \begin{eqnarray}\label{site-j-decay-finite}
    \left|\frac{\partial^j E_{\ell}(y)}{\partial [y(m_1)]_{i_1}\cdots\partial [y(m_j)]_{i_j}}\right|
    \leq C_j e^{-\eta_j\sum_{l=1}^j|y(\ell)-y(m_l)|} \qquad 1\leq i_1,\cdots,i_j\leq d.
  \end{eqnarray}
\end{lemma}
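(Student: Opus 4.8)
The plan is to estimate each term of the contour-integral representation \eqref{site-higher-derivative} pointwise, using the resolvent decay from Lemma \ref{lemma-resolvant-decay} together with the Hamiltonian-derivative decay from \asHloc, and then to sum the resulting geometric-type series. Fix $j$ with $1 \leq j \leq \mathfrak{n}$ and sites $m_1,\dots,m_j$. Since the contour $\mathscr{C}$ has finite length, $\mathfrak{f}$ is bounded on it, and all factors $\rz{y}$ satisfy $\|\rz{y}\|_{\mathscr{L}(\ell^2)} \leq 1/\mathfrak{d}$, it suffices to bound, uniformly in $z \in \mathscr{C}$, the $(\ell\ell)$ entry of each product of the form $\rzz \Theta_{1} \rzz \Theta_{2} \cdots \Theta_{p} \rzz$, where each $\Theta_a = [\mathcal{H}_{,\cdots}]$ is a (mixed) derivative of $\mathcal{H}$ in the sites among $\{m_1,\dots,m_j\}$, and every $m_l$ appears in exactly one $\Theta_a$.

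The key step is the pointwise expansion of such a matrix product: writing out the matrix multiplication as a sum over intermediate indices $k_1,\dots,k_p \in \Lambda_N$,
\begin{align*}
  \Big| \big[ \rzz \Theta_1 \rzz \cdots \Theta_p \rzz \big]_{\ell\ell} \Big|
  \leq \sum_{k_1,\dots,k_p} \big|[\rzz]_{\ell k_1}\big| \, \big|[\Theta_1]_{k_1 k_1'}\big|\,\cdots,
\end{align*}
one uses Lemma \ref{lemma-resolvant-decay} to bound each resolvent entry by $c_{\rm r} e^{-\gamma_{\rm r} r_{\cdot\cdot}}$ and \asHloc (in the form \eqref{h-decay-j}, applied at the appropriate order for each $\Theta_a$) to bound each Hamiltonian-derivative entry by an exponential in the distances from its two matrix indices to the sites it differentiates. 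Chaining these exponentials along the product and using the triangle inequality, one extracts a factor $e^{-\eta \sum_{l=1}^j r_{\ell m_l}}$ for a suitable $\eta>0$ (any $\eta$ strictly smaller than the minimum of $\gamma_{\rm r}$ and the relevant $\gamma_a$'s, after spending a fixed fraction of each decay rate to absorb the triangle-inequality losses), while the leftover exponential weight is summable over the free indices $k_1,\dots,k_p$ because \asL guarantees a uniformly bounded number of sites in any ball (the "$\ell^1$-summability of $e^{-c r}$ over a uniformly discrete set" estimate already used in Lemma \ref{lemma-H-bound} and Lemma \ref{lemma-resolvant-decay}). The number of terms in \eqref{site-higher-derivative} — the sum over partitions and multiset permutations — depends only on $j$, not on $N$, so the final constant $C_j$ is $N$-independent.

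The main obstacle is the bookkeeping in the general-$j$ case: one must verify that no matter how the $j$ differentiated sites are distributed among the factors $\Theta_a$, the product of the individual Hamiltonian-derivative exponentials plus the resolvent exponentials still dominates $\sum_l r_{\ell m_l}$ after the triangle-inequality manipulations, uniformly over all partitions appearing in \eqref{site-higher-derivative}. I would handle this by first doing $j=1$ and $j=2$ explicitly from \eqref{site-force}–\eqref{site-hessian} to fix the mechanism (there the geometry is transparent: each $m_l$ sits "between" two resolvent chains that both terminate at $\ell$), then observe that in the general term $Q^{(p)}_z[y](\Theta_1,\dots,\Theta_p)$ every intermediate index is flanked by resolvent factors on both sides, so each such index contributes a decaying weight in two directions and the worst case reduces, after repeated use of $r_{ab} + r_{bc} \geq r_{ac}$, to a single path from $\ell$ through each $m_l$ and back — which is exactly $\gtrsim \sum_l r_{\ell m_l}$ up to the constant-factor loss. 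Once this path-counting argument is in place, summing the residual exponential weights and the contour integral is routine.
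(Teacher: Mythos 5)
Your proposal is correct and follows essentially the same route as the paper: the paper likewise bounds the $(\ell\ell)$ entries of the contour-integral representations \eqref{site-force}--\eqref{site-hessian} by expanding the matrix products over intermediate indices, applying Lemma \ref{lemma-resolvant-decay} and \asHloc to each factor, splitting the exponents between triangle-inequality decay and summability over the uniformly discrete set, and it too carries this out explicitly only for $j=1,2$, declaring the higher-order cases analogous. Your additional remarks on the general-$j$ bookkeeping (each $m_l$ flanked by resolvent chains terminating at $\ell$) are a sound articulation of what the paper leaves implicit.
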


\def\rzz{\mathscr{R}_z}
%% I copied the old proof below, in case you don't like my changes!  But in
%% fact, I recommend to introduce this notation throughout.
\begin{proof}
  %For the sake of reability we drop the argument $(y)$ whenever possible and
  %write $r_{mn} = |y(m) - y(n)|$. 
  We will only give the explicit
  proofs for $j = 1, 2$, the cases $j > 2$ being analogous (but tedious).
  
  For $j=1$, we have from Lemma \ref{lemma-resolvant-decay} and the assumptions
  \asL, {\bf H.loc} that
  \begin{align} % \label{proof-2-2-1}
    \notag
    \Big[\rzz\left[\mathcal{H}_{,m}\right]_i\rzz \Big]_{\ell\ell} 
    &= 
      \sum_{\ell_1,\ell_2\in\Lambda_N} \big[\rzz \big]_{\ell\ell_1}
      \big( \left[\mathcal{H}_{,m}\right]_i \big)_{\ell_1\ell_2}
      \big[\rzz\big]_{\ell_2\ell} 
    \\[1ex]
    \notag
    &\leq
      c_{\rm r}^2 \bar{h}_1 
      \sum_{\ell_1,\ell_2\in\Lambda_N} 
      e^{ -\min\{\gamma_{\rm r},\gamma_1\}
      \big(r_{\ell\ell_1} + r_{\ell_1 m} + r_{m \ell_2} + r_{\ell_2 \ell} \big)} 
    \\
    \notag
    &=
      c_{\rm r}^2 \bar{h}_1  \bigg( \sum_{\ell_1\in\Lambda_N} 
      e^{ - \min\{\gamma_{\rm r},\gamma_1\} \big( r_{\ell\ell_1} 
      + r_{\ell_1 m} \big)} \bigg)^2 
    \\
    \label{eq:site-force-estimate-proof}
    &\leq
      Ce^{- \min\{\gamma_{\rm r},\gamma_1\}  |y(\ell)-y(m)|},
      \qquad\quad
  \end{align}
  where $C$ depends only on
  $\bar{h}_1, c_{\rm r}, \gamma_{\rm r}, \gamma_1, \mathfrak{m}$ and
  $d$. Together with \eqref{site-force} and {\bf F}, this leads to
  \begin{eqnarray}\label{site-force-decay-finite}
    \frac{\partial E_{\ell}(y)}{\partial [y(m)]_i} \leq C_1 e^{-\eta_1|y(\ell)-y(m)|} 
    \quad{\rm for}~1\leq i\leq d.
  \end{eqnarray}
	
  For $j=2$, we employ Lemma \ref{lemma-resolvant-decay} and \asL, {\bf H.loc}
  to estimate the three term arising in the expression \eqref{site-hessian} of
  the site Hessian, using similar computations as
  \eqref{eq:site-force-estimate-proof}:
  \begin{align*}
    & \hspace{-3em} 
      \Big[ \rzz \left[ \mathcal{H}_{,m} \right]_{i} \rzz 
      \left[ \mathcal{H}_{,n} \right]_{j} \rzz \Big]_{\ell\ell} 
    \\
    &\leq
      c_{\rm r}^3 \bar{h}_1^2 \sum_{\ell_1,\ell_2,\ell_3,\ell_4 \in \Lambda_N} 
      e^{ - \min\{\gamma_{\rm r},\gamma_1\}  \big( r_{\ell\ell_1} + r_{\ell_1 m}
      + r_{\ell_2 m} + r_{\ell_2 \ell_3} + r_{\ell_3 n} + r_{\ell_4 n}
      + r_{\ell_4 \ell} \big)}
    \\
    &\leq
      C e^{ - \frac12 \min\{\gamma_{\rm r},\gamma_1\} 
      ( |y(\ell)-y(m)|+|y(\ell)-y(n)| ) }; 
    \\[0.5em]
    %%%%%%%%%%%%%%%%%%%%%%%%%%%%%%%%%%%%%%%%%%%%%%%%%%%%%%%%%%%%%%%%
    & \hspace{-3em} 
      \Big[\rzz \left[ \mathcal{H}_{,n} \right]_{j} \rzz 
      \left[ \mathcal{H}_{,m} \right]_{i} \rzz \Big]_{\ell\ell} 
    \\
    &\leq 
      C e^{-\frac12 \min\{\gamma_{\rm r},\gamma_1\} (|y(\ell)-y(m)|+|y(\ell)-y(n)| )};
      \qquad \text{and}
    \\[0.5em]
    %%%%%%%%%%%%%%%%%%%%%%%%%%%%%%%%%%%%%%%%%%%%%%%%%%%%%%%%%%%%%%%%
    & \hspace{-3em} 
      \Big[ \rzz \left[ \mathcal{H}_{,mn} \right]_{ij} \rzz \Big]_{\ell\ell}
    \\
    &\leq
      c_{\rm r}^2 \bar{h}_2 \sum_{\ell_1,\ell_2\in\Lambda_N}
      e^{ - \min\{\gamma_{\rm r},\gamma_2\} ( r_{\ell \ell_1} + r_{\ell_1 m} + r_{\ell_1 n} 
      + r_{\ell_2 m} + r_{\ell_2 n} + r_{\ell_2 \ell} ) } 
    \\
    &\leq
      C e^{- \frac12 \min\{\gamma_{\rm r},\gamma_2\} (|y(\ell)-y(m)|+|y(\ell)-y(n)| )}.
  \end{align*}
  Inserting these three estimates into \eqref{site-hessian} together with {\bf
    F} yields the desired result,
  \begin{displaymath}\label{site-hessian-decay-finite}
    \frac{\partial^2 E_{\ell}(y)}{\partial [y(m)]_i\partial [y(n)]_j} \leq C_2 
    e^{-\eta_2\big(|y(\ell)-y(m)|+|y(\ell)-y(n)|\big)} \quad{\rm for}~1\leq i,j\leq d. 
    \qedhere
  \end{displaymath}	
  % \textcolor{blue}{\it as we can see from the proof, the decay rate for
  % \eqref{site-hessian-decay-finite} is slower than that in
  % \eqref{site-force-decay-finite}}
\end{proof}

The next lemma summarises the consequences of {\bf H.sym}.

\begin{lemma}[Symmetries] \label{lemma-symmetry-site}
	%[Isometry and permutation invariance] 
	Let $y\in\mathcal{V}^N_{\mathfrak{m}}$.         % Vm already implies L1 
	Assume that {\bf H.sym} is satisfied.
	\begin{itemize}
	\item[(i)] {\rm (Isometry invariance)}
	if $g:\R^d\rightarrow\R^d$ is an isometry, 
	% on $\R^d$,
	then $E_{\ell}(y) = E_{\ell}(g(y))$;
	
	\item[(ii)] {\rm (Permutation invariance)}
	if $\mathcal{G}:\Lambda\rightarrow\Lambda$ is a permutation (relabelling)
	of $\Lambda$,
	then $E_{\ell}(y) = E_{\mathcal{G}^{-1}(\ell)}(y\circ\mathcal{G})$.	
	\end{itemize}
\end{lemma}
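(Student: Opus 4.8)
The plan is to prove both statements by exploiting the contour-integral representation \eqref{site-energy-contour}, $E_\ell(y) = -\frac{1}{2\pi i}\oint_{\mathscr C}\mathfrak f(z)[\rzz]_{\ell\ell}\dd z$, and tracking how the two operations — applying an isometry $g$ to $y$, and relabelling by a permutation $\mathcal G$ — act on the Hamiltonian matrix, hence on its resolvent, hence on the integrand. The key observation is that the contour $\mathscr C$ can be chosen independently of the particular configuration (it depends only on the spectral bounds from Lemma \ref{lemma-H-bound} and on $\mathfrak s(\mathfrak f)$, which are invariant under both operations since $g(y),\,y\circ\mathcal G\in\mathcal V^N_{\mathfrak m}$ with the same $\mathfrak m$), so the same $\mathscr C$ works for $y$, $g(y)$ and $y\circ\mathcal G$, and the identity reduces to a pointwise statement about the integrand.

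For part (i), I would argue as follows. By \asHsym(i), $h_{\ell k}(g(y)) = h_{\ell k}(y)$ for all $\ell,k$, hence $\mathcal H(g(y)) = \mathcal H(y)$ as matrices. Consequently $\rz{g(y)} = (\mathcal H(g(y)) - zI)^{-1} = \rz{y}$ for every $z\in\mathscr C$, so the diagonal entry $[\rz{g(y)}]_{\ell\ell} = [\rz{y}]_{\ell\ell}$, and plugging into \eqref{site-energy-contour} gives $E_\ell(g(y)) = E_\ell(y)$ immediately. This case is essentially trivial once one records that the Hamiltonian is literally unchanged.

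For part (ii), let $P = P_{\mathcal G}$ be the permutation matrix associated with $\mathcal G$, i.e. $(Pv)_\ell = v_{\mathcal G(\ell)}$ (one must fix the convention carefully, but this is routine). By \asHsym(ii) one has $h_{\mathcal G^{-1}(\ell)\mathcal G^{-1}(k)}(y\circ\mathcal G) = h_{\ell k}(y)$, which translates into the matrix identity $\mathcal H(y\circ\mathcal G) = P^{-1}\mathcal H(y)P$ (or $P^\top\mathcal H(y)P$, since $P$ is orthogonal). Therefore $\rz{y\circ\mathcal G} = P^{-1}\rz{y}P$, and taking the $(\mathcal G^{-1}(\ell),\mathcal G^{-1}(\ell))$ diagonal entry: $[\rz{y\circ\mathcal G}]_{\mathcal G^{-1}(\ell)\mathcal G^{-1}(\ell)} = [P^{-1}\rz{y}P]_{\mathcal G^{-1}(\ell)\mathcal G^{-1}(\ell)} = [\rz{y}]_{\ell\ell}$. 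Substituting into \eqref{site-energy-contour} (with the index $\mathcal G^{-1}(\ell)$ and configuration $y\circ\mathcal G$) yields $E_{\mathcal G^{-1}(\ell)}(y\circ\mathcal G) = -\frac{1}{2\pi i}\oint_{\mathscr C}\mathfrak f(z)[\rz{y}]_{\ell\ell}\dd z = E_\ell(y)$, as claimed.

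The only genuine subtlety — and the place I would be most careful — is bookkeeping the permutation-conjugation convention so that the index substitution at the end lands exactly on the diagonal entry $[\rz{y}]_{\ell\ell}$; a sign or inverse error in the definition of $P$ propagates through and makes the final indices not match. Everything else (contour independence, spectral invariance, holomorphic functional calculus commuting with conjugation by an orthogonal matrix and with equality of matrices) is immediate. One could alternatively bypass the contour representation entirely and work from the eigen-decomposition definition \eqref{site-energy}: under $g$ the eigenpairs are unchanged, and under $\mathcal G$ the eigenvalues $\varepsilon_s$ are unchanged while eigenvectors transform as $\psi_s \mapsto P^{-1}\psi_s$, so $[\,P^{-1}\psi_s\,]_{\mathcal G^{-1}(\ell)} = [\psi_s]_\ell$ and the sum over $s$ defining $E_{\mathcal G^{-1}(\ell)}(y\circ\mathcal G)$ collapses to that defining $E_\ell(y)$; I would likely present whichever is shorter, but the contour version has the advantage of reusing notation already set up for the locality proof.
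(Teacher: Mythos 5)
Your proposal is correct and follows essentially the same route as the paper: both parts are proved from the contour representation \eqref{site-energy-contour}, with \textbf{H.sym}~(i) giving $\mathcal{H}(g(y))=\mathcal{H}(y)$ and hence equality of resolvents, and \textbf{H.sym}~(ii) giving the permutation-conjugation of $\mathcal{H}$ and hence the relabelled diagonal entry of the resolvent. Your treatment of (ii) is in fact more explicit than the paper's one-line version, and the convention issue you flag only swaps $\mathcal{G}$ with $\mathcal{G}^{-1}$ in the conjugation, which does not affect the final diagonal identity.
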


\begin{proof}
  \noindent {\it (i)}
  Let $y'=g(y)$.  Since $g$ is an isometry, we have that
  $y'\in\mathcal{V}^N_{\mathfrak{m}}$. The assumption {\bf H.sym (i)} implies
  $h_{mn}(y) = h_{mn}(y')$, which together with \eqref{site-energy-contour}
  yields
  \begin{eqnarray}\label{proof-5-1-2-f}
    E_{\ell}(y) = -\frac{1}{2\pi i}\oint_{\mathscr{C}}\mathfrak{f}(z) 
    \left[\rz{y}\right]_{\ell\ell} \dd z 
    = -\frac{1}{2\pi i}\oint_{\mathscr{C}}\mathfrak{f}(z)
    \left[\rz{y'}\right]_{\ell\ell} \dd z = E_{\ell}(y'). 
  \end{eqnarray}
  
  \vskip 0.2cm
  \noindent {\it (ii)}
  Let $y'=y\circ\mathcal{G}$.  The assumption {\bf H.sym (ii)} implies
  $h_{mn}(y) = h_{\mathcal{G}^{-1}(m)\mathcal{G}^{-1}(n)}(y')$, which together
  with \eqref{site-energy-contour} leads to
  $E_{\ell}(y) = E_{\mathcal{G}^{-1}(\ell)}(y')$.
\end{proof}

% March 2015

\section{Pointwise thermodynamic limit} % of band energy
\label{sec-thermodynamic-limits}
\setcounter{equation}{0}
Our aim in this section is to give a meaning to energy in the infinite body
limit (``thermodynamic limit''). The notion of site energy makes this relatively
straightforward: we will prove that, fixing a site $\ell$, and ``growing'' the
material around it to an infinite body yields a well-defined site energy
functional $E_\ell$ for an infinite body. Total energy in an infinite body is of
course ill-defined, but using the site energies it then becomes straightforward
to consider energy-differences; cf. \S~\ref{sec-crystal-defects}.

We need the following additional assumption, connecting the Hamiltonians for
growing index-sets, in our analysis.

\def\asHemb{{\bf H.emb}\xspace}

\begin{flushleft} \asHemb. \label{as:asHemb}
	Let $y^N:\Lambda_N\rightarrow\R^d$,
	$y:\Lambda_N\cup\{x'\}\rightarrow\R^d$ be two configurations satisfying {\bf
		L}, and $h^N_{\ell k}(y^N)$, $h_{\ell k}(y)$ be the corresponding
	Hamiltonian matrix elements of these two configurations satisfying {\bf H.loc}.
	If $y^N(\ell)=y(\ell)$ for any $\ell\in\Lambda_N$, 
	then for $0\leq j\leq\mathfrak{n}-1$,
	\begin{eqnarray}\label{h-locality}
	\frac{\partial^j h^N_{\ell k}(y^N)}{\partial [y(m_1)]_{i_1}\cdots\partial [y(m_j)]_{i_j}} 
	= \lim_{|y(x')|\rightarrow\infty}\frac{\partial^j h_{\ell k}(y)}{\partial [y(m_1)]_{i_1}
		\cdots\partial [y(m_j)]_{i_j}}\quad\forall~\ell,k\in\Lambda_N
	\end{eqnarray}
	with $m_1,\cdots,m_j\in\Lambda_N$ and $1\leq i_1,\cdots,i_j\leq d$.
\end{flushleft}

\begin{remark}
    Intuitively, \asHemb states that, if one atom $y(x')$ in the system is
    moved to infinity, then the Hamiltonian matrix elements for the remaining
    subsystem $\{ y(\ell) ~\lvert~ \ell \in \Lambda_N \}$ do not depend on $y(x')$
    anymore.
    
    At first glance, this appears to be a consequence of {\bf H.tb} and {\bf
      H.loc}. The reason we have to formulate it as a separate assumption is to
    make a connection between the Hamiltonians for $\Lambda_N$ and
    $\Lambda_N \cup \{x'\}$. More generally, in Lemma \ref{lemma-locality-h}, we
    obtain an analogous connection between the Hamiltonians for any two systems
    $\Lambda_N, \Lambda_M$ with $\Lambda_N \subset \Lambda_M$.
\end{remark}

Let $\Lambda$ be a countable index set (or, reference configuration), then
we consider {\em deformed configurations} belonging to the class
\begin{eqnarray}
\notag
\mathcal{V}_{\mathfrak{m}}(\Lambda) 
  &:=& \left\{ y : \Lambda \rightarrow \R^d,~~
    y|_{\Lambda_N}\subset\mathcal{V}_{\mathfrak{m}}^N 
    ~~{\rm for~any~finite~}\Lambda_N \subset\Lambda \right\} \\[2mm]
   	&=& \big\{ y : \Lambda \to \R^d, |y(\ell)-y(k)| 
   	\geq \frak{m} \text{ for all } \ell, k \in \Lambda \big\}. 
\end{eqnarray}
If $y \in \mathcal{V}_{\frak{m}}(\Lambda)$, then \asL is satisfied for any
finite subsystem $\Lambda_N \subset \Lambda$.  In the following, whenever we
assume {\bf H.tb}, {\bf H.loc} and \asHemb for infinite $\Lambda$, we mean that
they are satisfied for the Hamiltonian matrices of any finite subsystem
$\Lambda_N \subset \Lambda$.

For a bounded domain $\Omega\subset\R^d$, we shall denote the Hamiltonian,
resolvent and energy of the finite subsystem contained in $\Omega$,
respectively, by $\mathcal{H}^{\Omega}(y)$, $\rzo{\Omega}{y}$ and
$E^{\Omega}(y)$. For simplicity of notation, we drop the argument 
$(y)$ whenever convenient.
% The following result states the existence of the thermodynamic limits of 
% the site energy, as well as the regularity and locality of the limits.

\begin{theorem}\label{theorem-thermodynamic-limit}
  Let $\Lambda$ be countable and $y \in \mathcal{V}_{\frak{m}}(\Lambda)$ be a deformation.
  Suppose the assumptions % \asL, 
  {\bf F}, {\bf H.tb}, {\bf H.loc}, \asHemb and {\bf H.sym} 
  are satisfied for all finite subsystems 
  (with simultaneous choice of constants), then  
 \begin{itemize}
  	\item[(i)] {\rm (existence of the  thermodynamic limits)}
%  the limit
%  \begin{eqnarray}\label{site-limit}
%   E_{\ell}(y):=\lim_{R\rightarrow\infty}E^{B_R(y(\ell))}_{\ell}(y)
%  \end{eqnarray}
%  exists for any $\ell\in\Lambda$, where $B_R(y(\ell))$ is the ball centred at
%  site $y(\ell)$ with radius $R$. 
          for any $\ell \in \Lambda$ and for any sequence of convex and bounded
          sets $\Omega_R \supset B_R(y(\ell)), R > 0$ the limit
    \begin{eqnarray}\label{site-limit-new}
      E_{\ell}(y) := \lim_{R\rightarrow\infty}E^{\Omega_R}_{\ell}(y)
    \end{eqnarray}
    exists and is independent of the choice of sets $\Omega_R$;
  \item[(ii)] {\rm (regularity and locality of the limits)} the limits
    $E_{\ell}(y)$ possess $j$th order partial derivatives with
    $1 \leq j \leq \mathfrak{n}-1$, and it holds that
	\begin{eqnarray}\label{site-locality-tdl}
	\left|\frac{\partial^j E_{\ell}(y)}{\partial [y(m_1)]_{i_1}\cdots
		\partial [y(m_j)]_{i_j}}\right|\leq C_j e^{-\eta_j\sum_{l=1}^j|y(\ell)-y(m_l)|} 
	\quad 1\leq i_1,\cdots,i_j\leq d,
	\end{eqnarray}
	where the constants $C_j$ and $\eta_j$ are same to those in Lemma
        \ref{lemma-locality-site};

	\item[(iii)] {\rm (isometry invariance)}
	if $g:\R^d\rightarrow\R^d$ is an isometry, 
	% on $\R^d$,
	then $E_{\ell}(y) = E_{\ell}(g(y))$;
	
	\item[(iv)] (permutation invariance)
	if $\mathcal{G}:\Lambda\rightarrow\Lambda$ is a permutation (relabelling)
	of $\Lambda$,
	then $E_{\ell}(y) = E_{\mathcal{G}^{-1}(\ell)}(y\circ\mathcal{G})$.
 \end{itemize}
\end{theorem}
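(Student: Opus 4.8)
The plan is to prove the four claims in sequence, with parts (iii) and (iv) being essentially immediate once (i) is established. The backbone is a Cauchy-sequence argument for (i) combined with a uniform-in-$R$ version of the locality estimates from Lemma \ref{lemma-locality-site} to pass to the limit in (ii). First I would fix a site $\ell$ and two convex bounded domains $\Omega, \Omega'$ with $\Omega \subset \Omega'$, both containing $B_R(y(\ell))$, and estimate the difference $E^{\Omega'}_\ell(y) - E^{\Omega}_\ell(y)$ using the contour representation \eqref{site-energy-contour}. The key tool is the resolvent identity: writing $\mathcal{H}^{\Omega'}$ in block form relative to $\Omega$ and its complement $\Omega' \setminus \Omega$, one has $\rzo{\Omega'}{y} - \iota\, \rzo{\Omega}{y}\, \iota^{\rm T}$ expressed in terms of a product of resolvents and the ``coupling block'' of the Hamiltonian connecting $\Omega$ to $\Omega'\setminus\Omega$. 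By assumption \asHemb together with \asHtb, that coupling block has entries bounded by $\bar h_0 e^{-\gamma_0 r_{pq}}$ with $p \in \Omega$, $q \in \Omega'\setminus\Omega$, and since $y(\ell) \in B_R$ while any $q \in \Omega'\setminus\Omega$ satisfies $r_{\ell q} \gtrsim R$ (using convexity of $\Omega_R$ so that $B_R(y(\ell)) \subset \Omega$ is ``deep inside''), one picks up a factor $e^{-cR}$. Applying Lemma \ref{lemma-resolvant-decay} to bound the remaining resolvent factors, and integrating over the bounded contour $\mathscr{C}$ (where $\mathfrak{f}$ is bounded) using \eqref{dist-c}, yields
\begin{displaymath}
  \big| E^{\Omega'}_\ell(y) - E^{\Omega}_\ell(y) \big| \leq C e^{-cR}
\end{displaymath}
with $C, c$ independent of $\Omega, \Omega', R$. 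This shows $\{E^{\Omega_R}_\ell(y)\}$ is Cauchy as $R \to \infty$, hence converges; and taking two different sequences $\Omega_R, \Omega_R'$ and comparing each to their union shows the limit is independent of the choice, giving (i).

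For (ii), I would first differentiate the finite-$\Omega_R$ site energies using \eqref{site-force}, \eqref{site-hessian} (and \eqref{site-higher-derivative} for $j \geq 3$), observe that the bounds in Lemma \ref{lemma-locality-site} are uniform in $N$ --- equivalently uniform in $\Omega_R$ --- because the constants there depend only on $\bar h_j, \gamma_j, \gamma_{\rm r}, c_{\rm r}, \mathfrak{m}, d$ and not on the system. Then I would run the same resolvent-identity comparison as above on the \emph{derivatives}: the difference $\partial^j E^{\Omega'}_\ell - \partial^j E^{\Omega}_\ell$ is again a finite sum of contour integrals of products of resolvents and Hamiltonian-derivative blocks, at least one of which connects $\Omega$ to $\Omega' \setminus \Omega$ and hence contributes the $e^{-cR}$ decay (here one also needs \asHemb in the form \eqref{h-locality} for the derivatives of $h$, which is exactly how it is stated, up to order $\mathfrak{n}-1$). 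This shows the derivative sequences are Cauchy, uniformly on the relevant configuration set, so the limit $E_\ell$ is $C^{\mathfrak{n}-1}$ and its derivatives are the limits of the finite-system derivatives. Passing the uniform bound \eqref{site-j-decay-finite} to the limit then gives \eqref{site-locality-tdl} with the same constants.

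For (iii) and (iv): given an isometry $g$, note $g(y) \in \mathcal{V}_{\frak{m}}(\Lambda)$ and $g$ maps a convex bounded set $\Omega_R \supset B_R(y(\ell))$ to a convex bounded set $g(\Omega_R) \supset B_R(g(y)(\ell))$; by Lemma \ref{lemma-symmetry-site}(i) applied to each finite subsystem, $E^{\Omega_R}_\ell(y) = E^{g(\Omega_R)}_\ell(g(y))$, and letting $R \to \infty$ on both sides (legitimate by (i), since $g(\Omega_R)$ is an admissible exhausting sequence for $g(y)$) gives $E_\ell(y) = E_\ell(g(y))$. The permutation statement is the same argument using Lemma \ref{lemma-symmetry-site}(ii), noting a permutation $\mathcal{G}$ of $\Lambda$ maps finite subsystems to finite subsystems and $B_R(y(\ell))$ is exhausted compatibly.

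The main obstacle I expect is making the resolvent-difference estimate clean: one must set up the ``restriction/extension'' maps between $\ell^2(\Omega)$ and $\ell^2(\Omega')$ carefully, verify that \asHemb genuinely licenses comparing $\mathcal{H}^{\Omega}$ with the corresponding block of $\mathcal{H}^{\Omega'}$ (this is precisely why \asHemb was isolated as a separate hypothesis rather than derived from \asHtb/\asHloc), and confirm that convexity of $\Omega_R$ forces $\operatorname{dist}(B_R(y(\ell)), \Omega_R \setminus \Omega_R) \gtrsim R$ --- or more precisely that every atom outside $\Omega$ but inside $\Omega'$ is at distance $\gtrsim R$ from $y(\ell)$ once $B_R(y(\ell)) \subset \Omega$. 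For $j \geq 3$ the combinatorics of \eqref{site-higher-derivative} make the bookkeeping heavier, but no new idea is needed; each term in the sum still contains a resolvent or Hamiltonian-block straddling the region boundary.
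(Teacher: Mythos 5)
Your overall architecture coincides with the paper's: compare $E^{\Omega}_\ell$ and $E^{\Omega'}_\ell$ through the contour representation \eqref{site-energy-contour} and a resolvent identity, show that the differences (and the differences of all derivatives up to order $\mathfrak{n}-1$) decay like $e^{-cR}$ uniformly in $y$, and obtain (iii)--(iv) by applying Lemma \ref{lemma-symmetry-site} to each finite subsystem and passing to the limit. Parts (ii), (iii) and (iv) of your plan are essentially the paper's proof.

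There is, however, one genuine gap in part (i), and it sits exactly where you flag ``the main obstacle'' without resolving it. In the resolvent identity the perturbation $\mathcal{H}^{\Omega'}-\widetilde{\mathcal{H}}^{\Omega}$ is \emph{not} only the coupling block between $\Omega$ and $\Omega'\setminus\Omega$: the $\Omega\times\Omega$ block of $\mathcal{H}^{\Omega'}$ in general differs from $\mathcal{H}^{\Omega}$, because the matrix elements $h_{pq}$ depend on the whole configuration. For $p,q$ close to $\ell$ the two adjacent resolvent factors are only $O(1)$, so unless $|h^{\Omega'}_{pq}-h^{\Omega}_{pq}|$ itself decays exponentially in ${\rm dist}(y(p),\Omega^{\rm c})+{\rm dist}(y(q),\Omega^{\rm c})$, the Cauchy estimate fails outright. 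The assumption {\bf H.emb} cannot deliver this by itself: it is a purely qualitative limit statement with no rate. The paper supplies the missing quantitative bound as a separate result, Lemma \ref{lemma-locality-h} (estimates \eqref{h-locality-0} and \eqref{h-locality-j}), whose proof translates each exterior atom to infinity along an outward ray (convexity of $\Omega$ keeps the translated atoms far from $\Omega$ along the whole path), integrates the first-derivative bound of {\bf H.loc} along that path, and uses {\bf H.emb} only to identify the endpoint of the process with $h^{\Omega}_{pq}$. This lemma is the main new ingredient of the section and must be proved before your estimate $|E^{\Omega'}_\ell(y)-E^{\Omega}_\ell(y)|\le Ce^{-cR}$ goes through. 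A smaller technical point: $\iota\,\mathscr{R}_z^{\Omega}\,\iota^{\rm T}$ is singular, so the resolvent identity should be applied to the zero-padded Hamiltonian $\widetilde{\mathcal{H}}^{\Omega}$; the paper first shifts the spectrum so that $0\notin[\underline{\sigma},\overline{\sigma}]$ and keeps the contour at distance $\mathfrak{d}$ from $0$, so that the spurious zero eigenvalues neither obstruct the resolvent bounds nor (since $\mathfrak{f}(0)=0$) alter the site energies.
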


Before we prove Theorem \ref{theorem-thermodynamic-limit} we establish an
  extension of \asHemb.

\begin{lemma}\label{lemma-locality-h}
	Let $\Lambda_M \supsetneq \Lambda_N$, $y^M:\Lambda_M\rightarrow\R^d$ and
	$y^N:\Lambda_N\rightarrow\R^d$ be two configurations satisfying \asL.
	Assume that there exists a convex set $\Omega\subset\R^d$ such that
	\begin{eqnarray*}
		y^M(\ell)=y^N(\ell)\in\Omega\quad\forall~\ell\in\Lambda_N
		\qquad{\rm and}\qquad
		y^M(\ell)\in\Omega^{\rm c}\quad\forall~\ell\in\Lambda_M\backslash\Lambda_N,
	\end{eqnarray*}
	where $\Omega^{\rm c}$ is the complement of $\Omega$ in $\R^d$.  
	If {\bf F},	{\bf H.loc, H.ext} are satisfied then, for $0\leq j\leq\mathfrak{n}-1$, 
	there exist positive constants $c_j, \kappa_j$, 
	which do not depend on $M$, $N$ and $\Omega$, such that
	\begin{eqnarray}\label{h-locality-0}    
	|h_{\ell k}^M(y^M)-h_{\ell k}^N(y^N)| 
	\leq c_0 \exp\Big(- \kappa_0 \Big( \dis{y^N(\ell)}{\Omega^{\rm c}}
	+  \dis{y^N(k)}{\Omega^{\rm c}} \Big) \Big), 
	\end{eqnarray}
	and
	\begin{eqnarray}
	\notag
	\left| 
	\frac{\partial^j h^M_{\ell k}(y^M)}{\partial [y(m_1)]_{i_1} \cdots \partial [y(m_j)]_{i_j}}
	- \frac{\partial^j h^N_{\ell k}(y^N)}{\partial [y(m_1)]_{i_1}\cdots \partial [y(m_j)]_{i_j}} 
	\right| \hspace{-25em} & 
	\\[0.2em]
	\label{h-locality-j}
	&\leq 
	c_j \exp\Big( -\kappa_j \Big(\dis{y^N(\ell)}{\Omega^{\rm c}} 
	+ \dis{y^N(k)}{\Omega^{\rm c}} \\
	& \notag
	\hspace{14em} + \sum_{s=1}^j(|y^N(\ell)-y^N(m_s)|+|y^N(k)-y^N(m_s)|)
	\Big) \Big)
	\end{eqnarray}
	for any $\ell,k,m_s\in\Lambda_N$ and $1\leq i_1,\dots,i_j\leq d$.
\end{lemma}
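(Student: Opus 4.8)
I would pass from $\Lambda_M$ to $\Lambda_N$ by deleting the atoms of $\Lambda_M\setminus\Lambda_N$ one at a time. Each deletion is matched, through \asHemb, with the limit ``send the atom to infinity'', and the size of its effect on $\partial^j h_{\ell k}$ is controlled by the fundamental theorem of calculus together with the decay estimate \eqref{h-decay-j}. Because this step differentiates $\partial^j h$ once more, it uses \eqref{h-decay-j} at order $j+1$, which is precisely why the statement is restricted to $j\le\mathfrak n-1$.

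For a single deletion step there exist an extra atom $x^\ast$ and a unit vector $e$ with $e\cdot y^M(x^\ast)\ge e\cdot z$ for every $z\in\overline\Omega$ and for every other extra position: if not, every extra position would lie in the intersection of the closed half-spaces supporting $\overline\Omega$, i.e.\ in $\overline\Omega$, contradicting $y^M(\Lambda_M\setminus\Lambda_N)\subset\Omega^{\rm c}$ (take $\Omega$ closed without loss of generality). Freeze all atoms except $x^\ast$ at their $y^M$-positions and drive $x^\ast$ along the ray $t\mapsto y^M(x^\ast)+te$, $t\ge0$. Since $e$ points away from $\overline\Omega\supseteq y^M(\Lambda_N)$ and $y^M(x^\ast)$ is $e$-maximal among the extra atoms, along this ray: (i) the distance from the moving atom to every other atom is non-decreasing, so \asL is preserved; (ii) the moving atom stays in $\Omega^{\rm c}$, so its distances to $y^N(\ell)$ and $y^N(k)$ are always at least $\dis{y^N(\ell)}{\Omega^{\rm c}}$ and $\dis{y^N(k)}{\Omega^{\rm c}}$; and (iii) $\big|y^M(x^\ast)+te-y^N(\ell)\big|\ge\tfrac12\big(|y^M(x^\ast)-y^N(\ell)|+t\big)$, similarly with $k$ in place of $\ell$. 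Writing $\Lambda':=\Lambda_M\setminus\{x^\ast\}$, \asHemb (with $\Lambda_N,x'$ replaced by $\Lambda',x^\ast$) identifies $\partial^j h^{\Lambda'}_{\ell k}(y^M|_{\Lambda'})$ with the $t\to\infty$ limit of $\partial^j h^{\Lambda_M}_{\ell k}$ along the ray, so the fundamental theorem of calculus (legitimate because $j+1\le\mathfrak n$) writes their difference as $-\int_0^\infty\nabla_{y(x^\ast)}\partial^j h^{\Lambda_M}_{\ell k}\cdot e\,dt$. Estimating the integrand by \eqref{h-decay-j} for the $(j{+}1)$-tuple $(m_1,\dots,m_j,x^\ast)$ and using (i)--(iii), the $t$-integral converges and the deletion step is bounded by
\[
  C_j\exp\!\Big(-\tfrac{\gamma_{j+1}}{2}\Big(\textstyle\sum_{s=1}^{j}\big(|y^N(\ell)-y^N(m_s)|+|y^N(k)-y^N(m_s)|\big)+|y^N(\ell)-y^M(x^\ast)|+|y^N(k)-y^M(x^\ast)|\Big)\Big),
\]
with $C_j$ depending only on $\bar h_{j+1},\gamma_{j+1},\mathfrak m$ and $d$ (the inner sum empty when $j=0$).

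Iterating — at each stage choosing a fresh pair $(x^\ast,e)$ of this kind for the atoms still present and deleting $x^\ast$ — and telescoping, I get
\[
  \bigg|\frac{\partial^j h^{\Lambda_M}_{\ell k}(y^M)}{\partial[y(m_1)]_{i_1}\cdots\partial[y(m_j)]_{i_j}}-\frac{\partial^j h^{\Lambda_N}_{\ell k}(y^N)}{\partial[y(m_1)]_{i_1}\cdots\partial[y(m_j)]_{i_j}}\bigg|\le C_j\,e^{-\frac{\gamma_{j+1}}{2}S}\!\!\!\sum_{x\in\Lambda_M\setminus\Lambda_N}\!\!\!e^{-\frac{\gamma_{j+1}}{2}(|y^N(\ell)-y^M(x)|+|y^N(k)-y^M(x)|)},
\]
where $S=\sum_{s=1}^{j}\big(|y^N(\ell)-y^N(m_s)|+|y^N(k)-y^N(m_s)|\big)$. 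The extra positions are pairwise at distance $\ge\mathfrak m$, and each is at distance $\ge\dis{y^N(\ell)}{\Omega^{\rm c}}$ from $y^N(\ell)$ and $\ge\dis{y^N(k)}{\Omega^{\rm c}}$ from $y^N(k)$; a packing estimate like the one in the proof of Lemma \ref{lemma-H-bound} (here \asL enters once more) thus bounds the sum by $C\exp\!\big(-\tfrac{\gamma_{j+1}}{4}(\dis{y^N(\ell)}{\Omega^{\rm c}}+\dis{y^N(k)}{\Omega^{\rm c}})\big)$. This gives \eqref{h-locality-0} and \eqref{h-locality-j} with $\kappa_j$ a fixed fraction of $\gamma_{j+1}$ and $c_j$ independent of $M$, $N$ and $\Omega$, as required.

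The one genuinely delicate point is this last uniformity: $\#(\Lambda_M\setminus\Lambda_N)$ is unbounded, so the bound for each deletion step must be summable with constants independent of $M$, $N$ and $\Omega$. This is what forces deleting an ``outermost'' atom at each stage — precisely the choice that makes a straight, $\mathcal V_{\mathfrak m}$-preserving escape path available and keeps the decay of the $x$-th step tied to $|y^N(\ell)-y^M(x)|+|y^N(k)-y^M(x)|$ rather than merely to $\dis{\cdot}{\Omega^{\rm c}}$ — and it is then the packing estimate from \asL that collapses the series over the extra atoms into the single exponential in $\dis{y^N(\ell)}{\Omega^{\rm c}}+\dis{y^N(k)}{\Omega^{\rm c}}$.
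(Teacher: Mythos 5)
Your proof is correct and rests on the same essential ingredients as the paper's: \asHemb to identify the removal of atoms with sending them to infinity, the fundamental theorem of calculus combined with \eqref{h-decay-j} at order $j+1$ (hence the restriction $j\le\mathfrak{n}-1$), the convexity of $\Omega$ to guarantee that distances grow at rate $\ge 1/2$ along the escape path, and a packing argument from \asL to sum over the extra atoms with constants independent of $M$, $N$ and $\Omega$. The organization, however, is genuinely different: the paper displaces \emph{all} atoms of $\Lambda_M\backslash\Lambda_N$ simultaneously along a single homotopy $t\mapsto(1-t)y^M+t\tilde{y}^{\mu}$, each atom moved a finite distance $R^{\mu}_{\ell}$ in the direction $\nu_{\ell}$ pointing away from its nearest point of $\overline{\Omega}$, and then lets $\mu\to\infty$; you instead delete one extremal atom at a time along an infinite ray and telescope. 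Your version matches the literal single-atom form of \asHemb more directly (the paper has to invoke it ``inductively'' anyway to justify its limit \eqref{proof-1-2-1}), and your choice of an outermost atom gives for free that the escape ray is non-decreasing in \emph{all} interatomic distances, so every intermediate configuration stays in $\mathcal{V}^M_{\mathfrak{m}}$ --- a point the simultaneous displacement glosses over, since atoms moving in different directions $\nu_n$ could in principle approach one another along the homotopy. The price you pay is the supporting-hyperplane selection step, which is correct as you argue it (and can be obtained even more simply: separate any one extra atom from $\overline{\Omega}$ by a hyperplane with unit normal $e$, then take the $e$-maximal extra atom as $x^{\ast}$). Both routes land on the same conclusion with $\kappa_j$ a fixed fraction of $\gamma_{j+1}$.
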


\begin{proof}
  We first prove the case $j=0$, i.e., \eqref{h-locality-0}.
  For each $\ell\in \Lambda_M\backslash\Lambda_N$, we can find a normalized
  vector $\nu_{\ell}$ such that
  $y^M(\ell)-\nu_{\ell}\cdot\dis{y^M(\ell)}{\Omega}\in \overline{\Omega}$.
  Let $\{R^{\mu}_{\ell}\}_{\mu\in\N}$ be a sequence for each
  $\ell\in \Lambda_M\backslash\Lambda_N$, such that
  $R^{\mu}_{\ell}\rightarrow\infty$ as $\mu\rightarrow\infty$, then we define
  \begin{eqnarray*}
    \tilde{y}^{\mu}(\ell):=\left\{
    \begin{array}{ll}
      y^N(\ell) & {\rm if}~\ell\in\Lambda_N; \\[1ex]
      y^M(\ell) + {\nu}_{\ell}\cdot R^{\mu}_{\ell} & {\rm if}~\ell\in\Lambda_M\backslash\Lambda_N.
    \end{array}\right.
  \end{eqnarray*}
  Using \asHemb with $j=0$ and an elementary argument we can inductively
    choose $R_\ell^\mu$, such that
  $R^{\mu}:=\min_{\ell\in \Lambda_M\backslash\Lambda_N}R^{\mu}_{\ell}
  \rightarrow\infty$ as $\mu\rightarrow\infty$ and
    \begin{eqnarray}\label{proof-1-2-1}
      h_{\ell k}^N(y^N)=\lim_{\mu\rightarrow\infty}h_{\ell k}^M(\tilde{y}^{\mu}).
    \end{eqnarray}
    
    Note that the convexity of $\Omega$ implies
    $$
    |y^M(\ell)-y^M(n) + c\nu_{n}| \geq \frac{1}{2}\Big( |y^M(\ell)-y^M(n)| + c\Big)
    $$
    for any $n\in M\backslash N$ and $c>0$.  Using the assumptions \asL and {\bf
      H.loc} with $j=1$, we have
	\begin{eqnarray*}
		&&   \hspace{-2.5em}
		\big|h_{\ell k}^M(\tilde{y}^{\mu})-h_{\ell k}^M(y^M)\big|
		~=~ \left|\int_0^1
		\sum_{n \in \Lambda_M \setminus \Lambda_N} 
		\frac{\partial h_{\ell k}^M}{\partial y(n)}
		\Big( (1-t) y^M + t \tilde{y}^{\mu} \Big) \cdot 
		\big( \tilde{y}^{\mu}(n) - y^M(n) \big)\dd t \right| \\[1ex]
		&\leq & C\bar{h}_1 \int_0^1  \sum_{n\in\Lambda_M\backslash\Lambda_N} 
		e^{-\gamma_1 \big(|y^M(\ell)-y^M(n)+tR^{\mu}\nu_{n}| 
			+ |y^M(k)-y^M(n)+tR^{\mu}\nu_{n}|\big)} \cdot R^{\mu} \dd t
		\\[1ex]
		&\leq & C\bar{h}_1  \sum_{n\in\Lambda_M\backslash\Lambda_N}  
		e^{-\frac{\gamma_1}{2} \big(|y^M(\ell)-y^M(n)|+|y^M(k)-y^M(n)|\big)} 
		\cdot  \int_0^1 e^{-\gamma_1 tR^{\mu}}R^{\mu} \dd t
		\\[1ex]
		&\leq & c_0 e^{-\kappa_0\big(\dis{y^N(\ell)}{\Omega^{\rm c}} 
			+ \dis{y^N(k)}{\Omega^{\rm c}}\big)} \cdot  \big(1-e^{-\gamma_1 R^{\mu}}\big) ,
	\end{eqnarray*}
	where $\kappa_0=\gamma_1/2-\varepsilon$ with any 
	$\varepsilon\in(0,\gamma_1/2)$, and $c_0$ is a constant
	depending only on $\varepsilon$, $\bar{h}_1$, $\gamma_1$,
	$\mathfrak{m}$ and $d$. Note that neither $\kappa_0$ nor $c_0$ depends
	on the system size $M$, $N$ and $\Omega$.
	Note that
	\begin{eqnarray*}
		\big|h_{\ell k}^M(y^M)-h_{\ell k}^N(y^N)\big| 
		= \lim_{\mu\rightarrow\infty}\big|h_{\ell k}^M(y^M)-h_{\ell k}^M(\tilde{y}^{\mu})\big| 
		\leq c_0 e^{-\kappa_0\big(\dis{y^N(\ell)}{\Omega^{\rm c}} + \dis{y^N(k)}{\Omega^{\rm c}}\big)},
	\end{eqnarray*}
	which completes the proof of the case $j=0$.
	
	With the same arguments, we can prove \eqref{h-locality-j} for  
	$1\leq j\leq\mathfrak{n}-1$ by using the assumptions {\bf H.loc} 
	with index $j+1$ and \asHemb with index $j$.
\end{proof}

\begin{proof} [Proof of Theorem \ref{theorem-thermodynamic-limit}]	
	{\it (i)}	
	Without loss of generality, we can assume that the upper bound of the spectrum
	$\overline{\sigma}< 0 $  % $-2\mathfrak{d}$ 
	(one can always shift the eigenvalues if this is not satisfied) 
	and the contour $\mathscr{C}$ is chosen such that it includes 0 and
	\begin{eqnarray}\label{dist-c-0}
	\min\left\{ {\rm dist}\big(\mathscr{C},\sigma(\mathcal{H}(y))\big),
	{\rm dist}\big(\mathscr{C},\mathfrak{s}(\mathfrak{f})\big),
	{\rm dist}\big(\mathscr{C},\{0\}\big) \right\} \geq \mathfrak{d}.
	\end{eqnarray}
	
  Let $\ell \in \Lambda$ and $B_R(y(\ell)) \subset \Omega_R \subsetneq \Omega'$, then
  we define
  \begin{eqnarray}\label{ham-extended}
    \Big[\widetilde{\mathcal{H}}^{\Omega_R}(y) \Big]_{km} 
    := \left\{ 
    \begin{array}{ll}
      \big[\mathcal{H}^{\Omega_R}(y)\big]_{km} 
      & \text{if } y(k), y(m) \in \Omega_R; \\[1ex]
      0 & \text{otherwise for } y(k), y(m) \in \Omega'.
    \end{array}\right.
  \end{eqnarray}
  Note that \eqref{dist-c-0} implies that the condition \eqref{dist-c} is 
  satisfied for the Hamiltonian  $\widetilde{\mathcal{H}}^{\Omega_R}$ with
  the contour $\mathscr{C}$. Moreover, 
  the resolvent $\rzotilR = (\widetilde{\mathcal{H}}^{\Omega_R}(y)-zI)^{-1}$
  is well-defined for any $z\in\mathscr{C}$ and satisfies the estimate in
  \eqref{resolvant-decay}.
  Under the assumption {\bf F}, we can observe that the band energy and site energies
  of the Hamiltonian $\mathcal{H}^{\Omega_R}(y)$ and
  $\widetilde{\mathcal{H}}^{\Omega_R}(y)$ are the same.

  We have from \eqref{site-energy-contour}, \eqref{ham-extended}, \asL, 
  {\bf H.tb}, Lemma \ref{lemma-locality-h} and Lemma \ref{lemma-resolvant-decay} 
  that 
  \begin{align} \nonumber
    & \hspace{-3em} 
      E^{\Omega'}_{\ell}(y)-E^{\Omega_R}_{\ell}(y) 
      ~=~ -\frac{1}{2\pi i} \oint_{\mathscr{C}} \mathfrak{f}(z) \Big[ 
      \mathscr{R}_z^{\Omega'} - \widetilde{\mathscr{R}}_z^{\Omega_R}
      % - \big( \widetilde{\mathcal{H}}^{\Omega_R}(y) - zI \big)^{-1} 
      \Big]_{\ell\ell} \dd z  \\[1ex] \nonumber
    &\leq 
      C \sum_{y(\ell_1),y(\ell_2) \in \Omega'} 
      \big[ \mathscr{R}_z^{\Omega'} \big]_{\ell\ell_1} \big[ \mathcal{H}^{\Omega'} -
      \widetilde{\mathcal{H}}^{\Omega_R} \big]_{\ell_1\ell_2}
      [\rzoR ]_{\ell_2\ell}
      % \big[\big( \widetilde{\mathcal{H}}^{\Omega_R}(y) - zI\big)^{-1}
      % \big]_{\ell_2\ell}
    \\[1ex] \nonumber
    &\leq 
      C \sum_{y(\ell_1),y(\ell_2) \in \Omega'} e^{-\gamma_{\bf r}r_{\ell\ell_1}}
      e^{-\kappa_0\big(\dis{y(\ell_1)}{\Omega' \backslash \Omega_R} 
      + \dis{y(\ell_2)}{\Omega'\backslash \Omega_R}\big)} 
      e^{-\gamma_{\bf r}r_{\ell\ell_2}} \\[1ex]    
    &\leq 
      C \left( \sum_{y(\ell_1)\in\Omega'} e^{-\gamma_{\bf r} r_{\ell\ell_1} - 
      \kappa_0 \dis{y(\ell_1)}{\Omega' \backslash \Omega_R} } \right)^2
      ~\leq~ C e^{-\min\{\gamma_{\bf r},\kappa_0\}R},
      \label{proof-2-3-1}
  % here, an exponent R/2 is obtained in the bracket, and square gives R
  \end{align}
  where the last constant $C$ depends only on $\bar{h}_0$, $c_{\rm r}$, $c_0$,
  $\gamma_{\bf r}$, $\kappa_0$, $\mathfrak{m}$ and $d$, but is independent of
  $y$ or $R$. 
  Since \eqref{proof-2-3-1} holds for any $\Omega'\supsetneq B_R(y(\ell))$
  it follows that $\left\{E^{\Omega_R}_{\ell}(y)\right\}_{R\in\N}$ is a Cauchy
  sequence. The uniqueness of the limit is also an immediate consequence of
  the fact that $\Omega'$ was arbitrary. This completes the proof of {\it (i)}.

  \vskip 0.2cm
  \noindent {\it (ii)}  
  {\it Case $j = 1$:} 
  
  For $\ell,m\in\Lambda$, we take $R>2|y(m)-y(\ell)|$, and
  then adopt the notation in the proof of {\it (i)}. 
  % In addition, for simplicity of notation, we drop the arguments $(y)$.
  %
  % let $B_R(y(\ell)) \subset \Omega_R \subsetneq \Omega'$. We define
  % $\widetilde{\mathcal{H}}^{\Omega_R}(y)$ again as in \eqref{ham-extended},
  % and denote by $\mathcal{H}^R(y)=\widetilde{\mathcal{H}}^{B_R(y(\ell))}(y)$,
  % $\rzo{R}{y}=\big(\widetilde{\mathcal{H}}^{B_R(y(\ell))}(y)-zI\big)^{-1}$ for
  % simplicity of notations.
  % 
  With the expression of \eqref{site-force}, we obtain by a direct calculation
  that
  \begin{multline}\label{proof-2-5-1}
  \frac{\partial E^{\Omega'}_{\ell}(y)}{\partial [y(m)]_i} - 
  \frac{\partial E^{\Omega_R}_{\ell}(y)}{\partial [y(m)]_i} 
  =
  \frac{1}{2\pi i} \oint_{\mathscr{C}} \mathfrak{f}(z) \Bigg[ 
  \restilde \left(\widetilde{\mathcal{H}}^{\Omega_R}-\mathcal{H}^{\Omega'}\right) \resprime
  \big[\widetilde{\mathcal{H}}^{\Omega_R}_{,m}\big]_i \resprime	+ 
  \\[1ex]  
  \restilde \big[ \mathcal{H}^{\Omega'}_{,m} \big]_i \resprime \left( 
  \widetilde{\mathcal{H}}^{\Omega_R} - \mathcal{H}^{\Omega'} \right) \restilde  
  + \restilde \left( \big[\mathcal{H}^{\Omega'}_{,m}\big]_i -
  \big[ \widetilde{\mathcal{H}}^{\Omega_R}_{,m} \big]_i
  \right) \widetilde{\mathscr{R}}^{\Omega_R}_z \Bigg]_{\ell\ell} \dd z.
  \end{multline}
  Using \asL, \asHtb, Lemma \ref{lemma-locality-h} and
  Lemma~\ref{lemma-resolvant-decay}, we can obtain from a similar argument as in
  \eqref{proof-2-3-1} that
  \begin{eqnarray}\label{proof-2-6-1}
  \left|\frac{\partial E^{\Omega'}_{\ell}(y)}{\partial [y(m)]_i} 
  - \frac{\partial E^{\Omega_R}_{\ell}(y)}{\partial [y(m)]_i}\right| 
  \leq Ce^{-\min\{\gamma_{\bf r},\kappa_0,\kappa_1\}\big(R+|y(\ell)-y(m)|\big)/2},
  \end{eqnarray}
  where the constant $C$ depends only on $\bar{h}_0, c_{\rm r}$, $c_0$, $c_1$,
  $\gamma_{\bf r}$, $\kappa_0$, $\kappa_1$, $\mathfrak{m}$ and $d$.
  Note that the estimate in \eqref{proof-2-6-1} can be bounded by
  $Ce^{-\min\{\gamma_{\bf r},\kappa_0,\kappa_1\}R/2}$, which does not depend on
  $y$. Therefore, we have that
  $\left\{\partial E^R_{\ell}(y)/\partial [y(m)]_i\right\}_{R\in\N}$ converge
  uniformly to some limit, which together with {\it (i)} implies that 
  $E_{\ell}(y)$ is differentiable with respect to $y(m)$ and the derivative 
  is given by
  \begin{eqnarray}\label{site-1st}
  \frac{\partial E_{\ell}(y)}{\partial [y(m)]_i}=\lim_{R\rightarrow\infty}
  \frac{\partial E^{B_R(y(\ell))}_{\ell}(y)}{\partial [y(m)]_i} 
  \quad~{\rm for}~1\leq i\leq d.
  \end{eqnarray}
  
  {\it Case $j > 1$:} 
  
  For the second order derivatives, we can obtain from the expression 
  \eqref{site-hessian} and a tedious calculation that
  %  \begin{eqnarray}
  %  \rzo{R}{y} 
  %  \left(\mathcal{H}^{R}(y)-\mathcal{H}^{\Omega}(y)\right)
  %  \big[\mathcal{H}^{\Omega}_{,m}(y)\big]_{i}
  %  \left(\big[\mathcal{H}^{R}_{,mn}(y)\big]_{ij}
  %  -\big[\mathcal{H}^{\Omega}_{,mn}(y)\big]_{ij}\right)
  %  \end{eqnarray}
  \begin{align*}
  & \hspace{-3em} 
  \frac{\partial^2 E^{\Omega'}_{\ell}(y)}{\partial [y(m)]_i \partial [y(n)]_j} 
  - \frac{\partial^2 E^{\Omega_R}_{\ell}(y)}{\partial [y(m)]_i \partial [y(n)]_j} 
  \\[1ex]   
  =~ & \frac{1}{2\pi i}\oint_{\mathscr{C}}\mathfrak{f}(z) \Bigg[  \Bigg.
  \restilde \left(\mathcal{H}^{\Omega'}-\widetilde{\mathcal{H}}^{\Omega_R}\right)  
  \resprime \big[\widetilde{\mathcal{H}}^{\Omega_R}_{,m}\big]_{i} \restilde 
  \big[\mathcal{H}^{\Omega'}_{,n}\big]_{j} \restilde 
  \\[1ex]  
  & +
  \resprime \left(\big[\widetilde{\mathcal{H}}_{,m}^{\Omega_R}\big]_i
   - \big[\mathcal{H}_{,m}^{\Omega'}\big]_i\right) \restilde 
   \big[\widetilde{\mathcal{H}}^{\Omega_R}_{,n}\big]_{j} \resprime
  \\[1ex]
  & + 
  \restilde \left(\widetilde{\mathcal{H}}^{\Omega_R}-\mathcal{H}^{\Omega'}\right)
  \resprime \big[\widetilde{\mathcal{H}}^{\Omega_R}_{,mn}\big]_{ij} \resprime
  \\[1ex]  
  &  +
  \restilde \left(\big[\mathcal{H}^{\Omega'}_{,mn}\big]_{ij} 
  - \big[\widetilde{\mathcal{H}}^{\Omega_R}_{,mn}\big]_{ij} \right) \restilde
  +
  \cdots \Bigg. \Bigg]_{\ell\ell} \dd z. \qquad
  \end{align*}
  For readability, we have omitted eight other terms in the square bracket, 
  which have the same structure as the listed four terms.
  By estimating each term using the same arguments in \eqref{proof-2-3-1}, we can obtain
  \begin{eqnarray}\label{proof-2-6-2}
  \left|\frac{\partial^2 E^{\Omega'}_{\ell}(y)}{\partial [y(m)]_i \partial [y(n)]_j} 
  - \frac{\partial^2 E^{\Omega_R}_{\ell}(y)}{\partial [y(m)]_i \partial [y(n)]_j}\right| 
  \leq Ce^{-\min\{\gamma_{\bf r},\kappa_0,\kappa_1,\kappa_2\}\big(R+r_{\ell m}+r_{\ell n}\big)/4},~~
  \end{eqnarray}
  which leads to the existence of 
  $\partial^2 E_{\ell}(y) / \partial [y(m)]_i \partial [y(n)]_j$ 
  and
  \begin{eqnarray}\label{site-2nd}
  \frac{\partial^2 E_{\ell}(y)}{\partial [y(m)]_i \partial [y(n)]_j}
  =\lim_{R\rightarrow\infty}
  \frac{\partial^2 E^{\Omega_R}_{\ell}(y)}{\partial [y(m)]_i \partial [y(n)]_j} 
  \quad~{\rm for}~1\leq i,j\leq d.
  \end{eqnarray}
  
  % The proofs of \eqref{site-jth} 
  For $2<j\leq\mathfrak{n}-1$, we can obtain by similar arguments that	\begin{eqnarray}\label{site-jth}
  \frac{\partial^j E_{\ell}(y)}{\partial [y(m_1)]_{i_1}\cdots\partial [y(m_j)]_{i_j}} 
  = \lim_{R\rightarrow\infty}\frac{\partial^j E^{\Omega_R}_{\ell}(y)}{\partial [y(m_1)]_{i_1}
  	\cdots\partial [y(m_j)]_{i_j}} \quad 1\leq i_1,\cdots,i_j\leq d ~~
  \end{eqnarray}  
  for any sequence $\Omega_R$ satisfying the conditions of {\it (i)}.
  
  % \vskip 0.2cm  \noindent {\it (iii)}
  The locality of $E_\ell$ in {\it (ii)} is now an immediate consequence of Lemma
  \ref{lemma-locality-site} and \eqref{site-1st}, \eqref{site-2nd}, \eqref{site-jth}.

  \vskip 0.2cm
  \noindent {\it (iii)}
  Let $y'=g(y)$.
  Since $g$ is an isometry, we have that $g\big(B_R(y(\ell))\big)=B_R(y'(\ell))$
  for any $R>0$.  
  We can obtain from {\bf H.sym (i)} and Lemma \ref{lemma-symmetry-site}(i) that
  \begin{eqnarray}\label{proof-5-1-7}
  E_{\ell}^{B_R(y(\ell))}(y) = E_{\ell}^{B_R(y'(\ell))}(y').
  \end{eqnarray}
%  The assumption {\bf H.sym (i)} implies that
%  \begin{eqnarray}
%  h^{B_R(y(\ell))}_{mn}(y) = h^{B_R(y'(\ell))}_{mn}(y'),   
%  \end{eqnarray}
%  which together with \eqref{site-energy-contour} yields
%  \begin{multline}\label{proof-5-1-2}
%  E_{\ell}^{B_R(y(\ell))}(y) 
%  = -\frac{1}{2\pi i}\oint_{\mathscr{C}}\mathfrak{f}(z) 
%  \left[\rzo{B_R(y(\ell))}{y}\right]_{\ell\ell} \dd z  \\[1ex]
%  = -\frac{1}{2\pi i}\oint_{\mathscr{C}}\mathfrak{f}(z)
%   \left[\rzo{B_R(y'(\ell))}{y'}\right]_{\ell\ell} \dd z
%  = E_{\ell}^{B_R(y'(\ell))}(y'). \qquad\qquad
%  \end{multline}
  Taking the limit $R\rightarrow\infty$ of \eqref{proof-5-1-7} and {\it (i)} yield $E_{\ell}(y)=E_k(y')$.
  
  \vskip 0.2cm
  \noindent {\it (iv)}
  Similar to the proof of {\it (iii)}, {\it (iv)} is a consequence of
  Lemma \ref{lemma-symmetry-site}(ii) and {\it (i)}.
%  Let $y'=y\circ\mathcal{G}$.
%  % Similar to the proof of (iv),
%  We have from the expression \eqref{site-energy-contour} 
%  and the assumption {\bf H.sym (ii)} that 
%  $E^{B_R(y(\ell))}_{\ell}(y) = 
%  E^{B_R(y'(\mathcal{G}^{-1}(\ell)))}_{\mathcal{G}^{-1}(\ell)}(y')$.
%  Taking the limit $R\rightarrow \infty$ yields
%  $E_{\ell}(y)=E_{\mathcal{G}^{-1}(\ell)}(y')$.
%  
\end{proof}

Theorem \ref{theorem-thermodynamic-limit} states the existence of the
thermodynamic limits of the site energies, as well as the regularity, locality
and isometry/permutation invariance of the limits.  In the following we shall
always denote this limiting site energy by $E_\ell$.

\begin{remark}
  We have only considered the band energy of the system so far.  The repulsive
  energy $E^{\rm rep}$ can be incorporated into our analysis without difficulty.

  Using the expression \eqref{e_rep_l} and the assumption {\bf U}, it is easy
  to justify the thermodynamic limit of the repulsive site energy
  $E_{\ell}^{\rm rep}$, as well as its regularity and locality as those in
  Theorem \ref{theorem-thermodynamic-limit}.
  Moreover, the symmetry results in Theorem \ref{theorem-thermodynamic-limit}
  are also clearly satisfied with the expression \eqref{e_rep_l} for
  $E_{\ell}^{\rm rep}$.
  Therefore, all we have to do is to take the total site energy
  $$
  E_{\ell}^{\rm tot} = E_{\ell} + E_{\ell}^{\rm rep}
  $$
  and then use the existing results for $E_{\ell}$.  For convenience and
  readability, we still work with the site band energy $E_\ell$ and continue to
  ignore the repulsive component.
\end{remark}

% April 2015

\section{Applications} 
\label{sec-crystal-defects}
\setcounter{equation}{0}

\def\Rcore{R_{\rm def}}
\def\Adm{{\rm Adm}}
\def\E{\mathcal{E}}
\def\L{\Lambda}
\def\UsH{\dot{\mathscr{W}}^{1,2}}
\def\Usz{\dot{\mathscr{W}^{\rm c}}}

\subsection{Tight-binding model for point defects}
\label{sec:tb model for point defects}
As alluded to in the Introduction, our primary aim in understanding the locality
of the TB model is the construction and rigorous analysis of QM/MM hybrid
schemes for crystalline defects, along the lines of \cite{ehrlacher13}. The next
step towards this end is a rigorous definition of a variational problem that is
to be solved. Since we have shown in Theorem \ref{theorem-thermodynamic-limit}
that the total TB energy can be split into exponentially localised site
energies, this is a relatively straightforward generalisation of the analysis in
\cite{ehrlacher13}, which considers MM site energies with bounded interaction
radius. 

Here, we only summarize the results, with an eye to the application we present
in \S~\ref{sec:appl:truncation}. For simplicity we restrict ourselves to point
defects only. Complete proofs and generalisations to general dislocation
structures are given in \cite{chenpre_vardef}.

We call an index set $\L$ a {\em point defect reference configuration} if 
% it satisfies the following assumption:

\def\asD{{\bf D}\xspace}

\begin{flushleft}
  \asD.   \label{as:asD}
  $\exists ~\Rcore>0, A \in {\rm SL}(d)$ such that
  $\Lambda\backslash B_{\Rcore} = (A\mathbb{Z}^d)\backslash B_{\Rcore}$ and
  $\L \cap B_{\Rcore}$ is finite.
\end{flushleft}

While, in previous sections, we have worked with {\em deformations} $y$ where
$y(\ell)$ denotes the deformed position of an atom indexed by $\ell$, it is now
more convenient to work with displacements $u : \L \to \R^d$,
$u(\ell) = y(\ell) - \ell$. For displacements $u$ we define the {\em
  energy-difference functional}
\begin{equation}
  \label{eq:ediff-before-renormalisation}
  \E(u) := \sum_{\ell \in \L} \big[ E_\ell(x+u) - E_\ell(x) \big],
\end{equation}
where $x$ denotes the identity map $x : \L \to \R^d, x(\ell) = \ell$. Due to the
exponential localisation of $E_\ell$ this series converges absolutely if $u$ has
compact support, i.e., for $u \in \Usz$ with
\begin{displaymath}
  \Usz := \big\{ u : \L \to \R^d, \exists R > 0 \text{ s.t. } u = {\rm const}
  \text{ in } \L \setminus B_R \big \};
\end{displaymath}
cf. Theorem \ref{th:appl:ediff}(i).

Next, still following \cite{ehrlacher13}, we extend the definition of $\E$ to a
natural energy space. For
$\ell \in \L, \rho \in \L - \ell := \{ m-\ell, m \in \L \setminus \{\ell\}\}$ we
define $D_\rho u(\ell) := u(\ell+\rho)-u(\ell)$, and moreover,
\begin{displaymath}
  Du(\ell) := \big( D_\rho u(\ell) \big)_{\rho \in \L-\ell}.
\end{displaymath}
We think of $Du(\ell) \in (\R^d)^{\L-\ell}$ as an (infinite) finite-difference
stencil. For any such stencil $Du(\ell)$ and $\gamma > 0$ we define the norm
\begin{displaymath}
  \big|Du(\ell)\big|_\gamma := \bigg( \sum_{\rho \in \L-\ell} e^{-2\gamma|\rho|} 
  \big|D_\rho u(\ell)\big|^2 \bigg)^{1/2},
\end{displaymath}
which gives rise to an associated semi-norm on displacements,
\begin{displaymath}
  \| Du \|_{\ell^2_\gamma} := \bigg( \sum_{\ell \in \L} |Du(\ell)|_\gamma^2 \bigg)^{1/2}.
\end{displaymath}
All (semi-)norms $\|\cdot\|_{\ell^2_\gamma}, \gamma > 0,$ are equivalent.  With
these definitions we can now define the function space, which encodes the
far-field boundary condition for displacements,
\begin{displaymath}
  \UsH := \big\{ u : \L \to \R^d, \| Du \|_{\ell^2_\gamma} < \infty \big\}.
\end{displaymath}
We remark that $\Usz$ is dense in $\UsH$ \cite{chenpre_vardef}.

In addition to the decay at infinity imposed by the condition $u \in \UsH$ we
also require a variant of \asL, stating that atoms do not collide. Thus, our set
of {\em admissible displacements} becomes
\begin{displaymath}
  \Adm_{\frak{m}} := \big\{ u \in \UsH, 
  |\ell+u(\ell)-m-u(m)| > \frak{m} |\ell-m| ~\forall~  \ell, m \in \L
  \big\},
\end{displaymath}
where $\frak{m}$ is an arbitrary positive number. Again, we can observe that
$\Usz \cap \Adm_{\frak{m}}$ is dense in $\Adm_{\frak{m}}$
\cite{chenpre_vardef}. We remark also that, due to the decay imposed by the
condition $u \in \UsH$, if $u \in \Adm_0$ then $u \in \Adm_{\frak{m}}$ for some
$\frak{m} > 0$. 

We can now state the main result concerning the energy-difference functional
$\E$. The proof is an extension of \cite[Lemma 2.1]{ehrlacher13} and will be
detailed in \cite{chenpre_vardef}. The main new ingredient in this extension, as
well as in Theorem \ref{th:appl:regularity} below, is quantifying how rapidly
the site energies approach those of a homogeneous crystal (without defect).
% , which we
% carry out in Lemma \ref{lemma-dc-decay} in \ref{sec-appendix-farfield}.

\begin{theorem}
  \label{th:appl:ediff}
  Suppose that \asD is satisfied, as well as {\bf F}, {\bf H.tb}, {\bf H.loc},
  \asHemb and {\bf H.sym} for all finite subsystems with simultaneous choice of
  constants.
  
  (i) $\E : \Usz \cap \Adm_0 \to \R$ is well-defined by
  \eqref{eq:ediff-before-renormalisation}, in the sense that the series
  converges absolutely.
  
  (ii) $\E : \Usz \cap \Adm_0 \to \R$ is continuous with respect to the
  $\|\cdot\|_{\ell^2_\gamma}$ semi-norm; hence, there exists a unique continuous
  extension to $\Adm_0$, which we still denote by $\E$.
  
  (iii) $\E \in C^{\mathfrak{n}-1}(\Adm_0)$ in the sense of Fr\'echet.
\end{theorem}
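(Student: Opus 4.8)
The plan is to reduce everything to the locality estimates for the limiting site energies $E_\ell$ established in Theorem \ref{theorem-thermodynamic-limit}(ii), together with the homogeneity assumption \asD, and to follow the template of \cite[Lemma 2.1]{ehrlacher13}. The key new quantitative ingredient I would isolate first is a bound on how fast $E_\ell(x+u)$ approaches the homogeneous site energy when $\ell$ is far from the defect core and from the ``support'' of $u$; more precisely, I would show that for $u \in \Usz \cap \Adm_0$ one has $|E_\ell(x+u) - E_\ell(x)| \lesssim \sum_{m} e^{-\eta_1 |\ell - m|}|Du(m)|_{\eta_1}$-type estimates, obtained by writing $E_\ell(x+u)-E_\ell(x)$ as an integral of $\frac{\partial E_\ell}{\partial y(m)}$ along the path $x+tu$ and inserting \eqref{site-locality-tdl}. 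The nonlinearity is harmless because $\Adm_0$ is an open set and the path $x+tu$ stays in $\Adm_{\frak{m}}$ for some $\frak{m}>0$ (as noted in the excerpt), so the constants $C_1, \eta_1$ from Theorem \ref{theorem-thermodynamic-limit} apply uniformly along the path.

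For part (i), I would use the above pointwise bound: since $u \in \Usz$ has compact support, only finitely many $D_\rho u(\ell)$ are nonzero, and summing $|E_\ell(x+u)-E_\ell(x)|$ over $\ell \in \L$ converges because the exponential weight $e^{-\eta_1|\ell-m|}$ is summable in $\ell$. Here I would also invoke \asD to argue that $E_\ell(x) = E_{\ell'}(x)$ whenever $\ell, \ell'$ are in the homogeneous far field (by isometry/permutation invariance, Theorem \ref{theorem-thermodynamic-limit}(iii)--(iv), since the lattice $A\Z^d$ looks the same around each such site), which is what makes the ``renormalised'' differences $E_\ell(x+u)-E_\ell(x)$ the right objects to sum. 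For part (ii), I would establish a Lipschitz-type bound $|\E(u_1)-\E(u_2)| \lesssim (\|Du_1\|_{\ell^2_\gamma} + \|Du_2\|_{\ell^2_\gamma} + 1)\|D(u_1-u_2)\|_{\ell^2_\gamma}$ on $\Usz \cap \Adm_0$, again by integrating $\frac{\partial E_\ell}{\partial y(m)}$ along the segment between the two displacements and using \eqref{site-locality-tdl} to get an $\ell^1$-in-$\ell$ / Cauchy--Schwarz bound; then density of $\Usz \cap \Adm_0$ in $\Adm_0$ gives the unique continuous extension.

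For part (iii), I would differentiate $\E$ formally, $\langle \del^k \E(u), v_1, \dots, v_k\rangle = \sum_\ell \sum_{m_1,\dots,m_k} \frac{\partial^k E_\ell(x+u)}{\partial y(m_1)\cdots\partial y(m_k)} : (v_1(m_1), \dots, v_k(m_k))$, check absolute convergence of this multilinear form and its boundedness as a $k$-linear map on $(\Usz)^k$ using the decay \eqref{site-locality-tdl} with $1 \le k \le \frak{n}-1$ (the exponential factor $e^{-\eta_k \sum |\ell - m_l|}$ controls the multiple lattice sums after repeated Cauchy--Schwarz against the weighted norms), and then verify the Fréchet differentiability estimate $\|\E(u+v) - \sum_{k=0}^{\frak n -1}\frac{1}{k!}\del^k\E(u)[v^{\otimes k}]\| = o(\|Dv\|^{\frak n - 1})$ via Taylor's theorem with integral remainder applied to each $E_\ell$. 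The main obstacle I anticipate is purely bookkeeping: organising the multiple sums over $\ell$ and $m_1,\dots,m_k$ so that the exponential weights from \eqref{site-locality-tdl} can be distributed to dominate the $e^{-\gamma|\rho|}$ weights defining $\|\cdot\|_{\ell^2_\gamma}$, i.e. showing the relevant weighted convolution operators on $\L$ are bounded; there is no conceptual difficulty beyond what is already in Theorem \ref{theorem-thermodynamic-limit}, which is why the full details are deferred to \cite{chenpre_vardef}.
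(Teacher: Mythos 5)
Your proposal follows the correct general template (that of \cite[Lemma 2.1]{ehrlacher13}, which is also what the paper points to), but note that the paper itself contains no proof of Theorem \ref{th:appl:ediff} --- it is deferred to \cite{chenpre_vardef} --- and the one substantive thing the paper says about that proof is that its ``main new ingredient'' is a quantitative estimate of how rapidly the site energies of the defective lattice approach those of the homogeneous crystal. Measured against this, your proposal has two genuine gaps. The first is the passage from the raw derivative bounds to estimates in terms of finite differences: integrating $\partial E_\ell/\partial [y(m)]_i$ along the path $x+tu$ and inserting \eqref{site-locality-tdl} yields $|E_\ell(x+u)-E_\ell(x)|\lesssim \sum_m e^{-\eta_1\mathfrak{m}|\ell-m|}\,|u(m)|$, i.e.\ a bound in terms of $u$ itself, not of $Du$. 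Since $\|D\cdot\|_{\ell^2_\gamma}$ is only a seminorm whose kernel contains the constants, no such bound can yield part (ii). The missing step is the sum rule $\sum_m \partial E_\ell(y)/\partial [y(m)]_i = 0$, a consequence of translation invariance (Theorem \ref{theorem-thermodynamic-limit}(iii) with $g(x)=x+c$), which lets one replace $u(m)$ by $D_{m-\ell}u(\ell)$ before estimating; you assert the $Du$-form of the estimate, but the mechanism you describe does not produce it.

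The second gap concerns summability over $\ell\in\Lambda$. Even in difference form, Theorem \ref{theorem-thermodynamic-limit}(ii) gives only bounds that are \emph{uniform} in $\ell$, e.g.\ $|E_\ell(x+u)-E_\ell(x)|\leq C|Du(\ell)|_\gamma$ and a uniform bound on the first variations of the site energies; summing such bounds over the infinite lattice diverges for general $u$ with merely $\|Du\|_{\ell^2_\gamma}<\infty$ (indeed $\sum_\ell |Du(\ell)|_\gamma$ need not converge when $Du$ is only square-summable). Compact support of $Du$ rescues part (i), but for the continuity in (ii) and for the boundedness of $\delta\mathcal{E}(u)$ as a linear functional in (iii) one must additionally use (a) that the residual first derivatives of the homogeneous-lattice site energy at the reference state pair to zero against any $Dv$ (point symmetry of $A\mathbb{Z}^d$), and (b) that $\partial E_\ell(x)/\partial y(m)$ approaches its homogeneous-crystal counterpart at a rate that is square-summable in $\ell$ away from $B_{R_{\rm def}}$. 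Item (b) is precisely the announced ``main new ingredient''; it does not follow from Theorem \ref{theorem-thermodynamic-limit} and requires a separate comparison of the defective and homogeneous Hamiltonians and resolvents in the spirit of Lemma \ref{lemma-locality-h}. Your closing claim that there is ``no conceptual difficulty beyond what is already in Theorem \ref{theorem-thermodynamic-limit}'' therefore misses the crux, and your assertion that $E_\ell(x)=E_{\ell'}(x)$ holds exactly for two far-field sites by isometry/permutation invariance is false: the defect core breaks translation invariance, so only an exponentially accurate approximate identity holds --- which is again item (b).
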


In view of Theorem \ref{th:appl:ediff} the following variational problem is
well-defined:
\begin{equation}
  \label{eq:appl:variational-problem}
  \bar u \in \arg\min \big\{ \E(u), u \in \Adm_0 \big\},
\end{equation}
where ``$\arg\min$'' is understood in the sense of local minimality. We are not
concerned with existence or uniqueness of minimizers but only their
structure. This is discussed in the next result, which is an extension of
\cite[Thm. 2.3]{ehrlacher13} (see \cite{chenpre_vardef} for the complete proof).

\begin{theorem}
  \label{th:appl:regularity}
  Suppose that \asD is satisfied, as well as {\bf F}, {\bf H.tb}, {\bf H.loc},
  \asHemb and {\bf H.sym} for all finite subsystems with simultaneous choice of
  constants.  If $\bar u \in \Adm_0$ is a strongly stable solution to
  \eqref{eq:appl:variational-problem}, that is,
  \begin{equation}
    \label{eq:appl:strong-stab}
    \exists~ \bar{c} > 0 \text{ s.t. } 
    \big\< \delta^2 \E(\bar u) v, v \big\> \geq \bar{c} \| Dv \|_{\ell^2_\gamma}^2
    \qquad \forall v \in \Usz,
  \end{equation}
  then there exists a constant $C > 0$ such that $\bar u$ satisfies the decay
  \begin{align*}
    |D\bar{u}(\ell)|_\gamma \leq C (1+|\ell|)^{-d} \qquad \forall \ell \in \L.
  \end{align*}
\end{theorem}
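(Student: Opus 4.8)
The plan is to adapt the regularity/decay argument of \cite[Thm.~2.3]{ehrlacher13} to the tight binding setting, the only new ingredient being the exponential locality of the site energies established in Theorem \ref{theorem-thermodynamic-limit}(ii). First I would write down the Euler--Lagrange equation for the minimizer: since $\bar u$ is a local minimizer of $\E$ over $\Adm_0$ and $\E \in C^{\mathfrak{n}-1}$ with $\mathfrak{n}\geq 4$, we have $\big\<\delta\E(\bar u), v\big\> = 0$ for all $v \in \Usz$ (and hence, by density, for all $v \in \UsH$). The strong stability hypothesis \eqref{eq:appl:strong-stab} then gives coercivity of the Hessian $\delta^2\E(\bar u)$ on $\UsH$ with respect to the $\|D\cdot\|_{\ell^2_\gamma}$ semi-norm. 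The standard strategy is to test the linearized equation with suitable cut-off and difference-quotient versions of $\bar u$ itself, using coercivity to absorb the leading term and locality of the site-energy derivatives to control the commutator/error terms.

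The key steps, in order, would be: (1) \emph{A priori decay from homogeneity of the far field.} Using assumption \asD, far from $B_{\Rcore}$ the reference configuration is a perfect lattice $A\mathbb{Z}^d$, so the linearized operator $\delta^2\E(\bar u)$ agrees, up to exponentially small (in distance to the defect core) perturbations, with a translation-invariant operator; this is the ``quantifying how rapidly site energies approach those of a homogeneous crystal'' step flagged before the theorem. (2) \emph{Discrete elliptic regularity / Combes--Thomas-type argument.} Combine coercivity of $\delta^2\E(\bar u)$ with the exponential off-site decay of its coefficients (from Theorem \ref{theorem-thermodynamic-limit}(ii), the bound $\lesssim e^{-\eta_2(r_{\ell m}+r_{\ell n})}$) to deduce that the Green's function of the linearized operator decays polynomially — in fact one first gets a weighted estimate. (3) \emph{Bootstrapping the decay rate.} Starting from the crude bound that $\bar u \in \UsH$ already provides (some $\ell^2_\gamma$-summability of $D\bar u$), feed it back into the Euler--Lagrange equation: the defect acts as a localized ``force'' (the difference between the true and homogeneous site-energy gradients at $x$, which decays exponentially in $|\ell|$ by \asD plus locality), so $D\bar u$ inherits the decay of the lattice Green's function of the homogenized operator, namely $|D\bar u(\ell)|_\gamma \lesssim (1+|\ell|)^{-d}$, since the relevant discrete elliptic Green's function on $\Z^d$ has gradient decaying like $|\ell|^{-d}$.

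The main obstacle I anticipate is step (2)–(3): making rigorous the passage from ``coercive Hessian with exponentially-decaying but not finite-range coefficients'' to a sharp pointwise/stencil decay estimate on $\bar u$. In \cite{ehrlacher13} the interaction has bounded range, so the linearized operator is genuinely a finite-difference operator and classical discrete regularity applies; here one must handle an operator with an exponentially-localized but infinite stencil. The resolution is that exponential locality is more than enough: one can truncate the stencil at a logarithmic radius with negligible error, or more cleanly run a Combes--Thomas / exponential-weight argument directly on $\delta^2\E(\bar u)$ to transfer its coefficient decay to resolvent decay, just as in the proof of Lemma \ref{lemma-resolvant-decay}. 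A secondary technical point is ensuring all constants are uniform and that the nonlinear terms in the Euler--Lagrange equation (controlled by $\delta^3\E$, available since $\mathfrak{n}\geq 4$) are genuinely lower-order after the cut-off/difference-quotient test functions are inserted; this is where the quadratic structure of $|D v(\ell)|_\gamma$ and the $\|Dv\|_{\ell^2_\gamma}^2$ coercivity are used to close the estimate. As stated in the excerpt, the complete argument is deferred to \cite{chenpre_vardef}; the sketch above is the natural route.
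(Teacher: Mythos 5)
Your proposal is correct and follows essentially the route the paper itself indicates: the paper does not prove Theorem \ref{th:appl:regularity} here but defers the complete argument to \cite{chenpre_vardef}, describing it only as an extension of \cite[Thm.~2.3]{ehrlacher13} whose one new ingredient is quantifying how rapidly the site energies approach those of the homogeneous crystal. Your sketch identifies exactly that ingredient and the correct additional difficulty (an exponentially localized but infinite-range linearized stencil in place of the finite-range operator of \cite{ehrlacher13}), so it matches the intended proof.
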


\begin{remark}
  (i) The condition \eqref{eq:appl:strong-stab} is stronger than actually
  required. Indeed, it suffices that $\bar u$ is a critical point and that
  strong stability is satisfied only in the far-field;
  cf. \cite[Eq. (2.7)]{ehrlacher13}.

  (ii) Higher-order decay estimates can be proven for higher-order gradients.
  For example, when $\mathfrak{n}\geq 5$, then
  $|D_{\rho_1}D_{\rho_2} \bar u(\ell)| \leq C |\ell|^{-d-1}$ for $|\ell|$
  sufficiently large; see \cite{ehrlacher13,chenpre_vardef} for more
  details. These estimates will be useful in our companion papers
  \cite{chenpreE, chenpreF} for the construction of highly accurate MM
  potentials, but are not required in the present work.
\end{remark}

\subsection{Convergence of a numerical scheme}
\label{sec:appl:truncation}
As a reference scheme to compare our QM/MM schemes against, and also as an
elementary demonstration of the usefulness of the locality results and of the
framework of \S~\ref{sec:tb model for point defects} we present an approximation
error analysis for a basic truncation scheme.

To construct the scheme we first prescribe a radius $R > 0$ and restrict the set
of admissible displacements to
\begin{displaymath}
  \Adm_{0}(R) := \big\{ u \in \Adm_0, u = 0 \text{ in } \L \setminus B_R \big\}.
\end{displaymath}
The pure Galerkin scheme $\bar u_R \in \arg\min \{ \E(u), u \in \Adm_0(R) \}$
is analyzed in \cite{ehrlacher13} and the convergence rate $\| D\bar u - D \bar
u_R \|_{\ell^2_\gamma} \lesssim R^{-d/2}$ is proven. 

\def\Rbuf{R^{\rm buf}}

In our case, the energy-difference $\E(u)$ is not computable for
$u \in \Adm_0(R)$ due to the infinite interaction radius of the TB
model. However, we can exploit the exponential localisation to truncate it. To
that end, we let $\Rbuf$ be a buffer region width
(cf. Theorem~\ref{th:appl:min-approx} and Remark \ref{rem:buffer radius}),
$\L_R := \L \cap B_{R+\Rbuf}$ and for any $v : \L \to \R^d$ define
$v^R : \L_R \to \R^d$ satisfying $v^R = v$ on $\L_R$. 
Then, for
  $u \in \Adm_0$, we define the truncated energy-difference functional
\begin{displaymath}
  \E_R(u) := E^{\L_R}\big([x+u]^R\big) - E^{\L_R}\big(x^R\big).
\end{displaymath}
Clearly, $\E_R$ is well-defined and $\E_R \in C^{\mathfrak{n}-1}(\Adm_0)$ in the
sense of Fr\'echet. To formulate the computational scheme, $\E_R$ need only be
defined for $u \in \Adm_0(R)$, but for the analysis it will be convenient to
define it for all $u \in \Adm_0$.

The computational scheme is now given by
\begin{equation}
  \label{eq:appl:min-approx}
  \bar u_R \in \arg\min \big\{ \E_R(u), u \in \Adm_0(R) \big\}.
\end{equation}

\begin{theorem}
  \label{th:appl:min-approx}
  Suppose that \asD is satisfied, as well as {\bf F}, {\bf H.tb}, {\bf H.loc},
  \asHemb and {\bf H.sym} for all finite subsystems with simultaneous choice of
  constants.

  If $\bar u$ is a strongly stable solution to
  \eqref{eq:appl:variational-problem}, then there are constants
  $C, R_0, c_{\rm buf}$ such that, for $R \geq R_0$ and
  $\Rbuf \geq c_{\rm buf} \log(R)$, there exists a strongly stable solution
  $\bar u_R$ to \eqref{eq:appl:min-approx} satisfying
  \begin{align} \label{estimate-Du}
    \big\| D\bar u - D\bar u_R \big\|_{\ell^2_\gamma} &\leq C R^{-d/2}, 
    \qquad \text{and} \\\label{estimate-energy}
    \big| \E(\bar u) - \E_R(\bar u_R) \big| &\leq C R^{-d}.
  \end{align}
\end{theorem}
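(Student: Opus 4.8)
The plan is to follow the strategy of \cite[Thm.~2.5]{ehrlacher13}, treating $\E_R$ as a perturbation of $\E$ and applying an inverse function theorem (or Brouwer degree) argument in the energy space $\UsH$. The first step is a \emph{consistency estimate}: control $\|\delta\E(\bar u) - \delta\E_R(\bar u)\|$ in the dual norm, and similarly $\|\delta^2\E(\bar u)-\delta^2\E_R(\bar u)\|$ in operator norm. Here one uses the definition $\E_R(u) = E^{\L_R}([x+u]^R) - E^{\L_R}(x^R)$ together with Lemma~\ref{lemma-locality-h} and Theorem~\ref{theorem-thermodynamic-limit}(ii): the difference $E_\ell(x+u) - E^{\L_R}_\ell([x+u]^R)$ and its derivatives are exponentially small in $\mathrm{dist}(\ell, \partial B_{R+\Rbuf})$, so summing over $\ell \in \L_R$ and over the (at most $R$-far) stencil variables and using $\Rbuf \geq c_{\rm buf}\log R$ gives a bound of the form $C R^{-p}$ for the buffer-induced error, which is negligible (polynomially small with a tunable exponent) compared to $R^{-d/2}$ provided $c_{\rm buf}$ is large enough. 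The ``hard boundary'' error — i.e., that $\bar u_R$ is constrained to vanish outside $B_R$ — is handled exactly as in \cite{ehrlacher13}: one takes a truncation $T_R\bar u$ of the exact solution $\bar u$ (multiplying by a cutoff), and the decay estimate $|D\bar u(\ell)|_\gamma \leq C(1+|\ell|)^{-d}$ from Theorem~\ref{th:appl:regularity} yields $\|D\bar u - D(T_R\bar u)\|_{\ell^2_\gamma} \leq C R^{-d/2}$, which is the dominant contribution and the source of the rate in \eqref{estimate-Du}.

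Second step: \emph{stability transfer}. From strong stability \eqref{eq:appl:strong-stab} of $\bar u$ for $\E$, and the operator-norm consistency estimate for the Hessians just established (restricted to the subspace $\Usz \cap \Adm_0(R)$, and noting $\delta^2\E_R$ is evaluated near $T_R\bar u$), one deduces that for $R$ large $\delta^2\E_R(T_R\bar u)$ is uniformly positive on $\Adm_0(R)$ with a constant $\bar c/2$, say. This uses the Lipschitz continuity of $\delta^2\E_R$ (which follows from $\E_R \in C^{\mathfrak n - 1}$, $\mathfrak n \geq 4$) to move the base point from $\bar u$ to $T_R\bar u$, the two being $O(R^{-d/2})$ apart.

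Third step: \emph{existence of $\bar u_R$ and the error bounds}. With a uniformly stable Hessian and a small residual $\|\delta\E_R(T_R\bar u)\| \leq C R^{-d/2}$ (residual $=$ truncation-of-solution error $+$ buffer consistency error), the quantitative inverse function theorem produces a solution $\bar u_R \in \Adm_0(R)$ of \eqref{eq:appl:min-approx} with $\|D\bar u_R - D(T_R\bar u)\|_{\ell^2_\gamma} \leq C\|\delta\E_R(T_R\bar u)\| \leq C R^{-d/2}$; combining with the truncation estimate gives \eqref{estimate-Du}, and $\bar u_R$ inherits strong stability. For the energy error \eqref{estimate-energy} one writes $\E(\bar u) - \E_R(\bar u_R) = \big(\E(\bar u) - \E(T_R\bar u)\big) + \big(\E(T_R\bar u) - \E_R(T_R\bar u)\big) + \big(\E_R(T_R\bar u) - \E_R(\bar u_R)\big)$; the first term is $O(R^{-d})$ because $\bar u$ is a critical point of $\E$ (so $\delta\E(\bar u)=0$ and the leading term is quadratic in $\|D\bar u - D(T_R\bar u)\| = O(R^{-d/2})$), the second is the buffer consistency error which is $\leq C R^{-p}$ with $p$ as large as we like, and the third is $O(R^{-d})$ because $\delta\E_R(\bar u_R)=0$ and again the expansion is quadratic in the $O(R^{-d/2})$ distance $\|D(T_R\bar u) - D\bar u_R\|$.

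\emph{Main obstacle.} The delicate point is the consistency estimate in Step~1 — specifically, showing that the \emph{derivatives} of the site energies converge to those of the truncated system at an exponential rate in the distance to the buffer boundary, uniformly in the (large, growing) index sets involved, and that the double (or $j$-fold) sums over $\ell \in \L_R$ and over stencil variables $m_1,\dots,m_j$ of these exponentially small terms remain summable and controllably small. This is precisely where Lemma~\ref{lemma-locality-h} and the locality estimates \eqref{site-locality-tdl} do the work, but combining them to get the clean $R^{-p}$ bound with $p$ adjustable via $c_{\rm buf}$ requires care; the rest is a by-now-standard consistency–stability argument transplanted from \cite{ehrlacher13}. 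Full details are deferred to \cite{chenpreE, chenpreF}.
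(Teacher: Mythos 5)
Your proposal follows essentially the same route as the paper: quasi-best approximation via $T_R\bar u$ using the decay estimate of Theorem \ref{th:appl:regularity}, consistency and stability estimates for $\delta\E_R$ and $\delta^2\E_R$ at $T_R\bar u$ derived from Lemma \ref{lemma-locality-h} and the thermodynamic-limit locality bounds, the inverse function theorem for existence of $\bar u_R$ and the $R^{-d/2}$ rate, and a quadratic expansion exploiting criticality for the $R^{-d}$ energy bound. You correctly identify the consistency estimates (the exponential-in-$\Rbuf$, polynomial-in-$R$ bounds on the first and second variations) as the technically delicate point; this is exactly where the paper invests its detailed work in the two auxiliary proofs following the theorem.
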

\begin{proof}
  We closely follow the classical strategy of the analysis of finite element
  methods, which is detailed for a setting very close to ours in
  \cite{ehrlacher13} in various approximation proofs.

% : quasi-best approximation, stability, consistency ,  1. We construct a truncation
%   $T_R \bar{u} \in \Adm_0(R)$ of the solution $\bar{u}$. (2) We prove stability,
%   i.e., positivity of $\ddel \E_R(T_R \bar{u})$ and consistence, i.e.,
%   $\del \E_R(T_R \bar{u}) \to 0$ as $R \to \infty$. (3) Application of the
%   inverse function theorem then yields the stated result.
  
  {\it 1. Quasi-best approximation: } Following \cite[Lemma 7.3]{ehrlacher13},
  we can construct $T_R\bar{u}\in\Adm_0(R)$ such that, for any $\gamma > 0$ and
  for $R$ sufficiently large,
  \begin{eqnarray*}\label{proof-4-3-1}
    \|DT_R\bar{u}-D\bar{u}\|_{\ell^2_\gamma} 
    \leq C \|D\bar{u}\|_{\ell^2_\gamma(\Lambda\backslash B_{R/2})}
    \leq CR^{-d/2}
  \end{eqnarray*}
  where Theorem \ref{th:appl:regularity} is used for the last inequality.  We
  now fix some $r > 0$ such that $B_r(\bar{u}) \subset \Adm_{\frak{m}}$ for some
  $\frak{m} > 0$. Then, for $R$ sufficiently large, we have that
  $T_R \bar{u} \in B_{r/2}(\bar{u})$ and hence
  $B_{r/2}(T_R \bar{u}) \subset \Adm_{\frak{m}}$.
 
  Since $\E \in C^3(\Adm_0(R))$, $\delta\E$ and $\delta^2\E$ are Lipschitz
  continuous in $B_r(\bar{u}) \cap \Adm_0(R)$ with Lipschitz constants $L_1$ and
  $L_2$, that is,
  \begin{align}
    \label{proof-4-3-2}
    \|\delta\E(\bar{u})-\delta\E(T_R\bar{u})\| &\leq L_1\|D\bar{u}-DT_R(\bar{u})\|_{\ell^2_\gamma} \leq CR^{-d/2},
    \qquad \text{and} \\
    \label{proof-4-3-3}
    \|\delta^2\E(\bar{u})-\delta^2\E(T_R\bar{u})\| &\leq L_2\|D\bar{u}-DT_R(\bar{u})\|_{\ell^2_\gamma} \leq CR^{-d/2}.
  \end{align}

  {\it 2. Stability: } 
  Using \eqref{proof-2-6-2} and the facts that
  $v=0$ and $T_R\bar{u} = 0$ outside $B_R$,
  we have that there exists a constant $\gamma_{\rm s}$, such that
  \begin{equation}
    \label{eq:appl:consistency-error-hessian}
    \left| \big\<\big(\delta^2\E_R(T_R\bar{u})-\delta^2\E(T_R\bar{u})\big)v,v\big\> \right| \leq Ce^{-\gamma_{\rm s}\Rbuf} R^{d}\|Dv\|_{\ell^2_\gamma}^2.
  \end{equation}
  The proof of this identity is relatively straightforward but does require some
  details, which we present following the completion of the proof of the theorem.
Together with \eqref{eq:appl:strong-stab} and \eqref{proof-4-3-3} this leads to
\begin{align}
  \notag
  & \hspace{-0.5cm} \big\<\delta^2\E_R(T_R\bar{u})v,v\big\>  \\[1ex]
  \notag
  &= 
    \big\<\delta^2\E(\bar{u})v,v\big\> +
    \big\<\big(\delta^2\E(T_R\bar{u})-\delta^2\E(\bar{u})\big)v,v\big\> +
    \big\<\big(\delta^2\E_R(T_R\bar{u})-\delta^2\E(T_R\bar{u})\big)v,v\big\>  
  \\[1ex]
  &\geq 
    \big( \bar{c} - C( R^{-d/2} + 
    e^{-\gamma_{\rm s}\Rbuf}R^{d}) \big) \|Dv\|_{\ell^2_\gamma}^2
    \geq \frac{\bar{c}}{2}\|Dv\|_{\ell^2_\gamma}^2  
    \qquad\forall~v \in \Adm_0(R), 
    \label{proof-stability}
\end{align}
for sufficiently large $R$ and sufficiently large $c_{\rm buf}$.
  
{\it 3. Consistency: }
Similarly to \eqref{eq:appl:consistency-error-hessian}, 
we can derive that there exists a constant $\gamma_{\rm c}$ such that
\begin{equation}
  \label{eq:appl:consistency-error-forces}
  \left|\big\<\delta\E_R(T_R\bar{u})-\delta\E(T_R\bar{u}),v\big\>\right|
  \leq Ce^{-\gamma_{\rm c}\Rbuf} R^{d-1/2} \|Dv\|_{\ell^2_{\gamma}}.
\end{equation}
We also present the detailed proof of \eqref{eq:appl:consistency-error-forces} after the proof of this theorem.
%The proof of this identity is relatively straightforward but does require some
%details, which we present following the completion of the proof of the theorem.

In order to ensure that the truncation of the electronic structure (the
variational crime committed upon replacing $\E$ with $\E_R$) we must choose
$\Rbuf$ such that $e^{-\gamma_{\rm c} \Rbuf} R^{d-1/2} \leq C R^{-d/2}$, or
equivalently, $e^{-\gamma_{\rm c} \Rbuf} \leq C R^{-(3d+1)/2}$. On taking
logarithms, we observe that this is true provided that
$\Rbuf \geq c_{\rm buf} \log R$ for $c_{\rm buf}$ sufficiently large.

Next, employing \eqref{proof-4-3-1} we obtain that 
\begin{multline}\label{proof-consistency} 
\big\<\delta\E_R(T_R\bar{u}),v\big\>  =
\big\<\delta\E_R(T_R\bar{u})-\delta\E(T_R\bar{u}),v\big\> +
\big\<\delta\E(T_R\bar{u})-\delta\E(\bar{u}),v\big\>  \\[1ex]
\leq C\big(e^{-\gamma_{\rm c}\Rbuf} R^{d-1} + R^{-d/2}  \big)\|Dv\|_{\ell^2_\gamma}
\leq CR^{-d/2}\|Dv\|_{\ell^2_\gamma} \qquad\forall~v\in \Adm_0(R) \quad
\end{multline}
for sufficiently large $R$ and appropriate $c_{\rm buf}$.
 
{\it 4. Application of inverse function theorem: } With the stability
\eqref{proof-stability} and consistency \eqref{proof-consistency}, the inverse
function theorem \cite[Lemma B.1]{luskin13} implies the existence of $\bar{u}_R$
and the estimate \eqref{estimate-Du}.

{\it 5. Error in the energy: } For the estimate of energy difference functional,
we have from $\E\in C^2(\Adm_0)$ that
\begin{multline}\label{proof-4-3-5}
  \big| \E(\bar{u}_R) - \E(\bar{u}) \big| = \Big| \int_0^1 \big\< \delta\E\big(
  (1-s)
  \bar{u} + s \bar{u}_R\big), \bar{u}_R - \bar{u} \big\> \dd s \Big| \\
  = \Big| \int_0^1 \big\< \del\E\big( (1-s) \bar{u} + s \bar{u}_R\big) -
  \del\E(\bar{u}), \bar{u}_R - \bar{u} \big\> \dd s \Big| \leq C \| D\bar{u}_R -
  D \bar{u} \|_{\ell^2_\gamma}^2 \leq CR^{-d} .
\end{multline}
Using \eqref{proof-2-3-1} and $\bar{u}^R=0$ in $\L\backslash B_R$, there exists some constant $\gamma_{\rm e}$ such that
\begin{align}\label{proof-4-3-6}
\nonumber
& |\E(\bar{u}_R)-\E_R(\bar{u}_R)| \\[1ex]
\nonumber
&= \sum_{\ell\in \Lambda\cap B_{R+\Rbuf/2}} 
\left( E_{\ell}(x+\bar{u}^R)
- E^{\L_R}_{\ell}(x+\bar{u}^R)
+ E^{\L_R}_{\ell}(x) - E_{\ell}(x) \right) \\
\nonumber
& \qquad +\sum_{\ell\in \Lambda\backslash B_{R+\Rbuf/2}} 
\left( E_{\ell}(x+\bar{u}^R)-E_{\ell}(x) \right)
+\sum_{\ell\in \Lambda_R\backslash B_{R+\Rbuf/2}} 
\left( E^{\L_R}_{\ell}(x+\bar{u}^R)
-E^{\L_R}_{\ell}(x) \right) \\
\nonumber
&\leq C\left( \sum_{\ell\in \Lambda\cap B_{R+\Rbuf/2}} e^{-\gamma_{\rm e}(R+\Rbuf-|\ell|)}
+ \sum_{\ell\in \Lambda\backslash B_{R+\Rbuf/2}}
e^{-\gamma_{\rm e}(|\ell|-R)} \right) \\[1ex]
&\leq C R^{d-1}e^{-\gamma_{\rm e}\Rbuf/2}.
\end{align}
Using \eqref{proof-4-3-5}, \eqref{proof-4-3-6}, 
and possibly choosing a larger constant $c_{\rm buf}$,
%  and the fact
% \begin{eqnarray*}
% |\E(\bar{u})-\E_R(\bar{u}_R)| \leq 
% |\E(\bar{u})-\E(\bar{u}_R)| + |\E(\bar{u}_R)-\E_R(\bar{u}_R)|,
% \end{eqnarray*}
we obtain \eqref{estimate-energy} for sufficiently large $R$.
\end{proof}

\begin{proof}[Proof of \eqref{eq:appl:consistency-error-hessian}]
  Let $u := T_R \bar{u}$ and
  $V_\ell^{\Omega} := V_\ell^{\Omega}(Du(\ell)) :=
  E_{\ell}^{\Omega}(x+u)-E_{\ell}^{\Omega}(x)$ and $V_\ell := V_\ell^\L$.
  Using $u =0$ in $\L \setminus B_R$ we have
	\begin{align*}
	& \big\<\big(\delta^2\E_R(u) - \delta^2\E(u)\big)v,v\big\>
	~=~ \sum_{\ell\in \Lambda\cap B_{R}} \big\< \big(\delta^2 V_{\ell}^{\Lambda_R} 
	- \delta^2 V_{\ell} \big) Dv(\ell), Dv(\ell) \big\>  \\
	& \qquad \qquad + \sum_{\ell\in \Lambda_R\backslash B_{R}} \big\< \big(\delta^2 V_{\ell}^{\Lambda_R} 
	- \delta^2 V_{\ell} \big) Dv(\ell), Dv(\ell) \big\> 
	- \sum_{\ell\in \Lambda\backslash 
		\Lambda_R} \big\<(\delta^2 V_{\ell})Dv(\ell), Dv(\ell)\big\> \\[1ex]
	&\qquad  =: {\rm T}_1 + {\rm T}_2 + {\rm T}_3.
	\end{align*}
	It is obvious from \eqref{proof-2-6-2} that
	\begin{align*}
          T_1 &\leq C\sum_{\ell\in \L\cap B_R}
                \sum_{\rho\in\L-\ell}\sum_{\sigma\in\L-\ell} 
                e^{-\gamma(R+\Rbuf-|\ell| + |\rho| + |\sigma|)} 
                |D_{\rho}v(\ell)| |D_{\sigma}v(\ell)| \\
              &\leq Ce^{-\gamma\Rbuf}\|Dv\|_{\ell^2_{\gamma/2}(\ell\in \L\cap B_R)}^2
	\end{align*}
	with $\gamma=\min\{\gamma_{\rm r},\kappa_0,\kappa_1,\kappa_2\}/4$.
	Using $v=0$ in $\L \setminus B_R$, we have
	from \eqref{proof-2-6-2} that
	\begin{align*}
	{\rm T}_2
	&\leq C \sum_{\ell\in \Lambda_R\backslash B_R}\sum_{\rho\in\Lambda-\ell\atop\ell+\rho\in B_R}
	\sum_{\sigma\in\Lambda-\ell\atop\ell+\sigma\in B_R}
	e^{-\gamma(R+\Rbuf-|\ell|+|\rho|+|\sigma|)} |D_{\rho}v(\ell)||D_{\sigma}v(\ell)|  \\
	&\leq  C e^{-\gamma(R+\Rbuf)} 
	\sum_{\ell\in \Lambda_R\backslash B_R}
	\left( \sum_{m\in \L \cap B_R}\sum_{n\in \L \cap B_R} e^{\gamma(2|\ell|-|\ell-m|-|\ell-n|)} \right)^{1/2}
	%\left( \sum_{\rho\in\Lambda-\ell\atop\ell+\rho\in B_R} e^{-\gamma|\rho|}|D_{\rho}v(\ell)|^2 \right)^{1/2}
	%\left( \sum_{\sigma\in\Lambda-\ell\atop\ell+\sigma\in B_R} e^{-\gamma|\rho|}|D_{\sigma}v(\ell)|^2\right)^{1/2} 
	|Dv(\ell)|_{\gamma/2}^2 \\
	&\leq C e^{-\gamma(R+\Rbuf)}
	\left( \sum_{m\in \Lambda\cap B_R} 
	\sum_{n\in \L\cap B_R}  e^{\gamma(|m|+|n|)} \right)^{1/2} 
	\|Dv\|^2_{\ell^2_{\gamma/2}(\ell\in \Lambda_R\backslash B_R)} \\[1ex]
	&\leq C % e^{-\gamma_{\rm c} \Rbuf} R^{(d-1)/2}  
	e^{-\gamma\Rbuf} R^{d-1}
	\|Dv\|^2_{\ell^2_{\gamma/2}(\ell\in \Lambda_R\backslash B_R)},
	\end{align*}
	and from \eqref{site-locality-tdl} that 
	\begin{align*}
	{\rm T}_3
	&\leq C \sum_{\ell\in \Lambda\backslash \L_R}\sum_{\rho\in\Lambda-\ell\atop\ell+\rho\in B_R}
	\sum_{\sigma\in\Lambda-\ell\atop\ell+\sigma\in B_R} e^{-\eta_2(|\rho|+|\sigma|)} |D_{\rho}v(\ell)||D_{\sigma}v(\ell)|  \\
	&\leq  C \sum_{\ell\in \Lambda\backslash \L_R}
	\left( \sum_{m\in \Lambda\cap B_R} 
	\sum_{n\in \L\cap B_R}  e^{-\eta_2(|\ell-m|+|\ell-n|)} \right)^{1/2} |Dv(\ell)|_{\eta_2/2}^2 \\
	&\leq C
	\left( \sum_{m\in \Lambda\cap B_R} 
	\sum_{n\in \L\cap B_R} e^{-2\eta_2\Rbuf} \right)^{1/2} 
	\|Dv\|^2_{\ell^2_{\eta_2/2}(\Lambda\backslash\L_R)} \\[1ex]
	&\leq C % e^{-\gamma_{\rm c} \Rbuf} R^{(d-1)/2}  
	e^{-\eta_2\Rbuf} R^d
	\|Dv\|^2_{\ell^2_{\eta_2/2}(\Lambda\backslash\L_R)}.
	\end{align*}
	The estimates for $T_1$, $T_2$ and $T_3$ together yields
        \eqref{eq:appl:consistency-error-hessian} with
        $\gamma_{\rm s} = \min\{\gamma,\eta_2\}$. 
\end{proof}

\begin{proof}[Proof of \eqref{eq:appl:consistency-error-forces}]
  We continue to adopt the notation from the proof of
  \eqref{eq:appl:consistency-error-hessian}. 
  % Let $u := T_R \bar{u}$.
  % and 
  % $V_\ell^{\Omega} := V_\ell^{\Omega}(Du(\ell)) := E_{\ell}^{\Omega}(x+u)-E_{\ell}^{\Omega}(x)$
  % with $V_{\ell}:(\R^d)^{\Lambda-\ell}\rightarrow\R$.  
  Using $u, v = 0$ in $\L \setminus B_R$ we have
  \begin{align*}
    \big\<\delta\E_R(u) - \delta\E(u),v\big\>
    &= \sum_{\ell\in \Lambda\cap B_{R}} \big\<\delta V_{\ell}^{\Lambda_R} 
      - \delta V_{\ell}, Dv(\ell)\big\>  
 + \sum_{\ell\in \Lambda_R\backslash B_{R}} \big\< \delta V^{\Lambda_R}_{\ell}
      - \delta V_{\ell}, Dv(\ell)\big\> \\
    & \qquad - \sum_{\ell\in \Lambda\backslash 
    	\Lambda_R} \big\<\delta V_{\ell}, Dv(\ell)\big\> \\
    &=: {\rm T}_1 + {\rm T}_2 + {\rm T}_3.
  \end{align*}
  From \eqref{proof-2-6-1} it follows that, for $\ell \in \L \cap B_R$,
  $|V_{\ell,\rho}^{\L_R} - V_{\ell,\rho}| \leq C e^{-\gamma(R+\Rbuf-|\ell|+|\rho|)}$ 
  with $\gamma=\min\{\gamma_{\rm r},\kappa_0,\kappa_1\}/2$,
  and hence,
  \begin{align*}
    {\rm T}_1 
    &\leq C \sum_{\ell \in \L_R \setminus B_R} \sum_{\rho \in \L-\ell}
      e^{-\gamma(R+\Rbuf-|\ell|+|\rho|)} |D_\rho v(\ell)| \\
    &\leq  C \sum_{\ell\in \Lambda\cap B_{R}} e^{-\gamma(R+\Rbuf-|\ell|)} 
      |Dv(\ell)|_{\gamma/2} \\
    &\leq C \bigg( \sum_{\L \cap B_R} 
       e^{-2\gamma(R+\Rbuf-|\ell|)} \bigg)^{1/2} \| Dv \|_{\ell^2_{\gamma/2}(\ell\in \Lambda\cap B_R)}  \\
    &\leq C e^{-\gamma \Rbuf} R^{(d-1)/2}  \| Dv \|_{\ell^2_{\gamma/2}(\Lambda\cap B_R)}.
  \end{align*}
  Similarly, using $v=0$ in $\L \setminus B_R$, we have from \eqref{proof-2-6-1}
  that
  \begin{align*}
  {\rm T}_2
  &\leq C \sum_{\ell\in \Lambda_R\backslash B_R}\sum_{\rho\in\Lambda-\ell\atop\ell+\rho\in B_R} e^{-\gamma(R+\Rbuf-|\ell|+|\rho|)} |D_{\rho}v(\ell)|  \\
  &\leq  C e^{-\frac{\gamma}{2}(R+\Rbuf)} 
  \left( \sum_{\ell\in \Lambda_R\backslash B_R} \sum_{k\in B_R} e^{\gamma(|\ell|-|\ell-k|)} \right)^{1/2}
  \left( \sum_{\ell\in \Lambda_R\backslash B_R} \sum_{\rho\in\Lambda-\ell\atop\ell+\rho\in B_R} e^{-\gamma|\rho|}|D_{\rho}v(\ell)|^2 \right)^{1/2} \\
  &\leq C e^{-\frac{\gamma}{2}(R+\Rbuf)}
  \left( \sum_{\ell\in \Lambda_R\backslash B_R} 
  \sum_{k\in B_R}  e^{\gamma|k|} \right)^{1/2} 
  \|Dv\|_{\ell^2_{\gamma/2}(\Lambda_R\backslash B_R)} \\[1ex]
  &\leq C % e^{-\gamma_{\rm c} \Rbuf} R^{(d-1)/2}  
  e^{-\frac{\gamma}{2}\Rbuf} R^{d-1} \big(\Rbuf\big)^{1/2}
  \|Dv\|_{\ell^2_{\gamma/2}(\Lambda_R\backslash B_R)}.
  \end{align*}
  Finally, from \eqref{site-locality-tdl} we obtain 
\begin{align*}
{\rm T}_3
&\leq C \sum_{\ell\in \Lambda\backslash \L_R}\sum_{\rho\in\Lambda-\ell\atop\ell+\rho\in B_R} e^{-\eta_1|\rho|} |D_{\rho}v(\ell)|  \\
&\leq  C 
\left( \sum_{\ell\in \Lambda\backslash\L_R} \sum_{k\in B_R} e^{-\eta_1 |\ell-k|} \right)^{1/2}
\left( \sum_{\ell\in \Lambda\backslash \L_R} \sum_{\rho\in\Lambda-\ell\atop\ell+\rho\in B_R} e^{-\eta_1|\rho|}|D_{\rho}v(\ell)|^2 \right)^{1/2} \\
&\leq C
\left( \sum_{k\in B_R} \sum_{\ell\in \Lambda\backslash\L_R} e^{-\eta_1(|\ell|-R)} \right)^{1/2} 
\|Dv\|_{\ell^2_{\eta_1/2}(\Lambda\backslash\L_R)} \\[1ex]
&\leq C % e^{-\gamma_{\rm c} \Rbuf} R^{(d-1)/2}  
e^{-\frac{\eta_1}{2}\Rbuf} R^{d-1/2}
\|Dv\|_{\ell^2_{\eta_1/2}(\Lambda\backslash\L_R)}.
\end{align*}
The estimates for $T_1$, $T_2$ and $T_3$ together yields
\eqref{eq:appl:consistency-error-forces} with $\gamma_{\rm c} =
\min\{\gamma,\eta_1\}/2$.
\end{proof}

\begin{remark}
  \label{rem:buffer radius}
  The choice of buffer width $\Rbuf$ is the most interesting aspect of
  Theorem~\ref{th:appl:min-approx}. As expected from the exponential
  localisation results, we obtain that $\Rbuf$ should be proportional to
  $\log(R)$. The fact that the constant of proportionality is important makes an
  implementation difficult. At least according to our proof, if we were to
  choose $\Rbuf = c \log(R)$ with a small $c$, then we would obtain a reduced
  convergence rate. 
  
  Our numerical results in \S~\ref{sec:convergence rate} show no such
  dependence, which may indicate that our proof is in fact sub-optimal, however
  it is equally possible that this effect can only be observed for much larger
  system sizes than we are able to simulate.
  %
  % A safer choice for practical implementation would, e.g., be a buffer width of
  % the form $\Rbuf = c_0 + c_1 \log R + c_2 (\log R)^2$, which would guarantee
  % the optimal rate, independent of the choice of constants. However in the
  % important preasymptotic regime it is difficult to give a clear recommendation.
\end{remark}

\section{Numerical results}
\label{sec:appl:numerics}
%
% # model parameters
% alpha=2.0
% r0=1.0
% Rcut = 2.8
%
% Rqm, Rbuf =
% [3.1 4.1 6.1 8.1 11.1 3.1 4.1 6.1 8.1 11.1 20.1
%  2.1 2.39 2.79 3.08 3.4 1.55 1.69 1.9 2.04 2.2 12.97]
%
We present numerical experiments to illustrate the results of the paper: (1) the
locality of the site energies and (2) the convergence of the truncation scheme
described in \S~\ref{sec:appl:truncation}. Given the present paper is primarily
concerned with the analytical foundations, we will show only a limited set of
results, employing a highly simplified toy model. We will present more
comprehensive numerical results in the companion papers \cite{chenpreE,
  chenpreF}. All numerical experiments were carried out in {\sc Julia}
\cite{Julia}.

\subsection{Toy Model}
\label{sec:toyu_model}
The Hamiltonian matrix is given by
\begin{align*}
  H_{\ell m}(y) = h(|y_\ell-y_m|) \quad \text{where} \quad
  h(r) &= \big( e^{-2\alpha(r-r_0)} - 2 e^{-\alpha(r-r_0)} \big) f_{\rm cut}(r), \\
  \text{and} \quad f_{\rm cut}(r) &= 
    \begin{cases}
      \big(1+e^{1/(r-r_{\rm cut})}\big)^{-1} &, r < r_{\rm cut}, \\
      0, & r \geq r_{\rm cut},
    \end{cases}
\end{align*}
with model parameters $\alpha = 2.0, r_0 = 1.0, r_{\rm cut} = 2.8$. The pair
potential term is set to be zero. 

Numerical tests suggest that a triangular lattice $A_{\rm tri} \Z^d$ with
\begin{displaymath}
  A_{\rm tri} = s \left(\begin{matrix}
      1 & 1/2 \\ 0 & \sqrt{3}/2
    \end{matrix} \right)
\end{displaymath}
and $s$ a scaling factor (close to $1.0$) is a stable equilibrium in the sense
of \S~\ref{sec:tb model for point defects}.

The 2D setting and the single orbital per site make this a convenient setting
for preliminary numerical tests.

\subsection{Locality of the site energy}
\label{sec:locality}
We construct two test configurations: (1) We ``carve'' a finite lattice domain
$\L_R = B_R \cap A_{\rm tri} \Z^d$ from the triangular lattice, and perturb each
position $y \in \L_R$ by a vector with entries equidistributed in $[0, 0.1]$ to
obtain $y$.  (2) We obtain a second test configuration by removing some random
lattice sites from $\L_R$ (vacancies) and perturb the remaining positions as in
(1) to obtain $y$. We then compute the first and second site energy derivatives
$E_{0, m}(y)$ and $E_{0, mn}(y)$ and plot them against, respectively, $r_{0m}$
and $r_{0m}+r_{0n}$.

In the test shown in Figure \ref{fig:locality}, we chose $s = 1.0$ and
$R = 10.0$, and the sites removed in (2) are
$A_{\rm tri} (1,0), A_{\rm tri} (0, -3), A_{\rm tri} (-2,2)$ and
$A_{\rm tri} (2,5)$. We clearly observe the predicted exponential decay. 

\begin{figure}
  \centering
  \includegraphics[width=13cm]{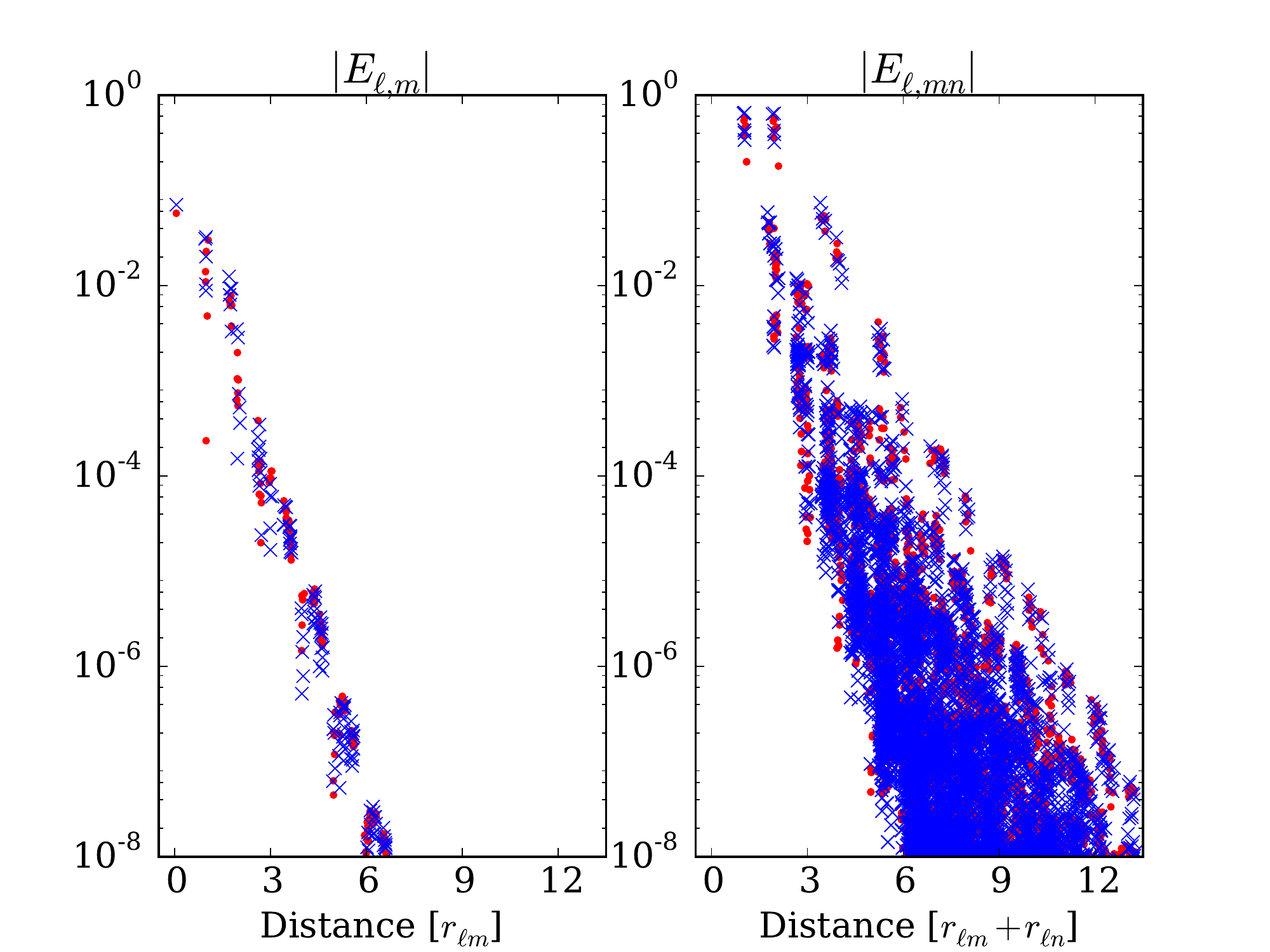}
  \caption{Locality of the site energy for the tight-binding toy model described
    in \S~\ref{sec:toyu_model}. Red dots denote configuration (1), while blue
    crosses denote configuration (2).}
  \label{fig:locality}
\end{figure}

\subsection{Convergence rate}
\label{sec:convergence rate}
In our second numerical experiment we confirm the prediction of Theorem
\ref{th:appl:min-approx}. We adopt again the model from
\S~\ref{sec:toyu_model}. As reference configuration we choose a di-vacancy
configuration,
\begin{displaymath}
  \L = A_{\rm tri} \Z^2 \setminus \big\{ (0,0), (1,0)\big\}.
\end{displaymath}
Then, for increasing radii $R$ with associated buffer radii $\Rbuf$,
\begin{equation}
% Rqm, Rbuf =
% [3.1 4.1 6.1 8.1 11.1 3.1 4.1 6.1 8.1 11.1 3.1 4.1 6.1 8.1 11.1 20.1
%  2.1 2.39 2.79 3.08 3.4 1.0 1.7 1.7 2.0 2.0 1.0 1.0 1.7 1.7 2.0 12.97]
  \label{eq:R_Rbuf}
  \begin{array}{r|c|lllll}
    & R & 3 & 4 & 6 & 8 & 11 \\
    \hline
    \text{Set 1}& \Rbuf & 2.1 & 2.4 & 2.8 & 3.0 & 3.4 \\
    \text{Set 2} & \Rbuf & 1.0 & 1.7 & 1.7 & 2.0 & 2.0 \\
    \text{Set 3} & \Rbuf & 1.0 & 1.0 & 1.7 & 1.7 & 2.0  
  \end{array}
\end{equation}
we solve the problem \eqref{eq:appl:min-approx}. In {\it Set 1} we have chosen
$\Rbuf = 1 + \log(R)$, while in {\it Sets 2, 3} we have chosen smaller buffer
radii to investigate the effect of these choices on the error in the numerical
solution.

The computed solutions $\bar{u}_R$ are compared against a high-accuracy solution
with $R = 20, \Rbuf = 11$, which yields the convergence graphs displayed in
Figure~\ref{fig:errors}, fully confirming the analytical prediction. To measure
errors, instead of $\|D\cdot\|_{\ell^2_\gamma}$, we employ the equivalent norm
$\| D^{\rm nn} \cdot \|_{\ell^2}$ with
$D^{\rm nn} u = ( D_{\rho} u)_{\rho \in \pm A_{\rm tri} e_i, i = 1, 2}$.

We do not observe a pronounced buffer size effect. This could have a number of
reasons, such as the fact that we are not far enough in the asymptotic regime or
simply that the model we are employing is ``too local'' to observe this. We will
present more extensive numerical results with a wider variety of TB models in
\cite{chenpreE, chenpreF}.

\begin{figure}
  \centering
  \includegraphics[width=13cm]{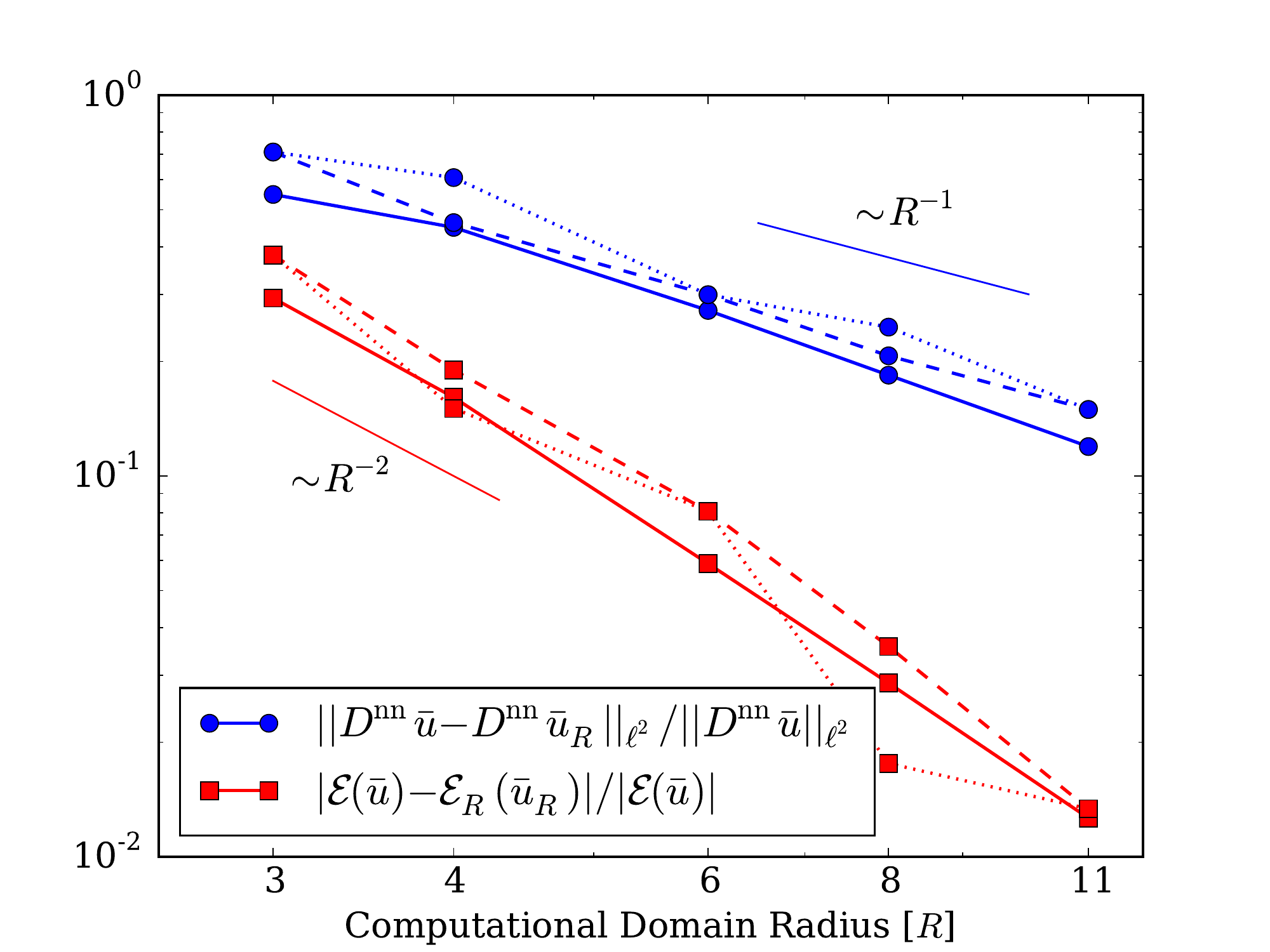}
  \caption{Convergence of \eqref{eq:appl:min-approx} with increasing domain size
    $R$, as described in \S~\ref{sec:convergence rate}. Set 1: full lines; Set
    2: dashed lines; Set 3: dotted lines.}
  \label{fig:errors}
\end{figure}

\section{Concluding remarks} \label{sec-conclusions}
\setcounter{equation}{0}
The main purpose of this paper was to set the scene for a rigorous numerical
analysis approach to QM/MM coupling. We have achieved this by developing a new
class of locality results for the tight binding model. Precisely, we have shown
that the total band energy can be decomposed into contributions from individual
sites in a meaningful, i.e. local, way.

This strong locality result is the basis for extending the theory of crystalline
defects of \cite{ehrlacher13}, which we have hinted at in \S~\ref{sec:tb model
  for point defects}, and carried out in detail in \cite{chenpre_vardef}. In
further forthcoming articles \cite{chenpreE,chenpreF} we are employing it to
develop new QM/MM coupling schemes for crystalline defects as well as their
rigorous analysis.

A key question that remains to be investigated in the future, is whether our
locality results extend to more accurate electronic structure models such as
Kohn--Sham density functional theory. Understanding this extension is critical to
take the theory we are developing in the present article and in
\cite{chenpreE,chenpreF} towards materials science applications. However, there
are many technical issues arising from the nonlinearity, the continuous nature,
and in particular the long-range Coulomb interaction.

% APPENDIX
\appendix
\renewcommand\thesection{\appendixname~\Alph{section}}

\section{Multiple orbitals per atom} 
\label{sec-appendix-multiorbital}
\renewcommand{\theequation}{A.\arabic{equation}}
\renewcommand{\thetheorem}{A.\arabic{theorem}}
\renewcommand{\thelemma}{A.\arabic{lemma}}
\renewcommand{\theproposition}{A.\arabic{proposition}}
\renewcommand{\thealgorithm}{A.\arabic{algorithm}}
\renewcommand{\theremark}{A.\arabic{remark}}
\setcounter{equation}{0}

We have assumed in \S~\ref{sec-tb-finite-hamiltonian}, and throughout this
paper, that there is only one atomic orbital for each atom ($n_{\Xi}$=1). In
this case, the symmetry assumption {\bf H.sym (i)} is natural.  However, in
practical calculations, there are multiple atomic-like orbitals
$\phi_{\ell\alpha}$ associated with each atomic site $\ell$.  Here, $\alpha$
denotes both the orbital and angular quantum numbers of the atomic state. Many
TB models employ one $s$-orbital $|s\>$, three $p$-orbitals
$\{ |x\>,~ |y\>,~ |z\> \}$ and five $d$-orbitals
$\{ |xy\>,~ |yz\>,~ |zx\>,~ |x^2-y^2\>,~ |3z^2-r^2\> \}$ per atom
\cite{finnis03,slater54}.  Except for the $s$-orbital, all other orbitals have a
spacial orientation, which means that the Hamiltonian matrix elements are {\em
  not} invariant under rotation/reflection. Therefore {\bf H.sym~(i)} is invalid
and must be reformulated.

% Let us now consider the Hamiltonian under an isometry $g:\R^d\rightarrow\R^d$.
We assume in the following that $n_{\Xi}>1$ and
$\{\phi_{\ell\alpha}\}_{1\leq\alpha\leq n_{\Xi}}$ is the set of atomic-like
orbitals for the site $\ell$.  The Hamiltonian can be expressed by
\eqref{tb-H-elements-abstract},
\begin{eqnarray}\label{tb-H-elements-appendix}
\Big(\mathcal{H}(y)\Big)_{\ell k}^{\alpha\beta} = 
\int_{\mathbb{R}^d} \phi_{\ell\alpha}({\bf r}-y(\ell))\widehat{\mathcal{H}}(y)
\phi_{k\beta}({\bf r}-y(k)) \dd {\bf r}.
\end{eqnarray}
Applying an isometry $g$ to $y$, we obtain
\begin{eqnarray}\label{tb-H-elements-isometry}
\Big(\mathcal{H}\big(g(y)\big)\Big)_{\ell k}^{\alpha\beta} = 
\int_{\mathbb{R}^d} \phi_{\ell\alpha}\big({\bf r}-g(y(\ell))\big)
\widehat{\mathcal{H}}\big(g(y)\big) \phi_{k\beta}\big({\bf r}-g(y(k))\big) 
\dd{\bf r}.
\end{eqnarray}
For simplicity of notation, we define
\begin{eqnarray*}
\psi_{\ell\alpha}({\bf r}) = \phi_{\ell\alpha}\Big( g^{-1}({\bf r})-y(\ell) \Big)
\quad{\rm and}\quad
\varphi_{\ell\alpha}({\bf r}) = \phi_{\ell\alpha}\Big( {\bf r}-g\big(y(\ell)\big) \Big).
\end{eqnarray*}
We assume that the two sets of atomic orbitals
$\{\psi_{\ell\alpha}\}_{1\leq\alpha\leq n_{\Xi}}$ and
$\{\varphi_{\ell\alpha}\}_{1\leq\alpha\leq n_{\Xi}}$ span the same subspace.
This is true for almost all tight-binding models since 
the set of the atomic orbitals always include
all three $p-$orbitals (if the $p-$orbital is involved) and 
all five $d-$orbitlas (if the $d-$orbital is involved).
% any of the following sets of atomic orbitals:
% (1) only one $s-$orbital; (2) one $s-$orbital and all three $p-$orbitals;
% (3) one $s-$orbital, all three $p-$orbitals and all five $d-$orbitals;
%
Then, there exists an orthogonal matrix $Q^{\ell}\in\R^{n_{\Xi}\times n_{\Xi}}$
such that
$\varphi_{\ell\alpha} = \sum_{1\leq\beta\leq
  n_{\Xi}}Q^{\ell}_{\alpha\beta}\psi_{\ell\beta}$.
We have from \eqref{tb-H-elements-isometry} that
\begin{eqnarray}\label{ham-local-tran} \nonumber
\Big(\mathcal{H}\big(g(y)\big)\Big)_{\ell k}^{\alpha\beta} &=& 
\int_{\mathbb{R}^d} \varphi_{\ell\alpha}({\bf r})
\widehat{\mathcal{H}}\big(g(y)\big) \varphi_{k\beta}({\bf r}) \dd{\bf r} \\[1ex]
&=& \sum_{1\leq\alpha',\beta'\leq n_{\Xi}}
Q^{\ell}_{\alpha\alpha'} Q^{k}_{\beta\beta'}
\int_{\mathbb{R}^d} \psi_{\ell\alpha'}({\bf r})
\widehat{\mathcal{H}}\big(g(y)\big) \psi_{k\beta'}({\bf r}) \dd{\bf r}.
\end{eqnarray}
Since $g$ is an isometry, it is natural to assume that
\begin{eqnarray}\label{ham-iso-assump}
\Big(\mathcal{H}(y)\Big)_{\ell k}^{\alpha\beta} =
\int_{\mathbb{R}^d} \psi_{\ell\alpha}({\bf r})
\widehat{\mathcal{H}}\big(g(y)\big) \psi_{k\beta}({\bf r}) \dd{\bf r}.
\end{eqnarray}

Let
$\Big(\mathcal{H}(y)\Big)_{\ell k} := \left[\Big(\mathcal{H}(y)\Big)_{\ell
    k}^{\alpha\beta}\right]_{1\leq\alpha,\beta\leq n_{\Xi}} \in\R^{n_{\Xi}\times
  n_{\Xi}}$
denote the local Hamiltonian, then we have from \eqref{ham-local-tran} and
\eqref{ham-iso-assump} that
\begin{eqnarray}\label{ham-local-relate}
\Big(\mathcal{H}\big(g(y)\big)\Big)_{\ell k} =
Q^{\ell} \cdot \Big(\mathcal{H}(y)\Big)_{\ell k} 
\cdot \left(Q^{k}\right)^{\rm T},
\end{eqnarray}
which yields
\begin{eqnarray}\label{ham-relate}
\mathcal{H}\big(g(y)\big) = Q \cdot
\mathcal{H}(y) \cdot Q^{\rm T}
\quad{\rm with}\quad Q={\rm diag}\left\{Q^1,\cdots,Q^N\right\}.
% {\rm diag}\big({Q^1}^{\rm T},\cdots,{Q^N}^{\rm T}\big).
\end{eqnarray}
Note that $Q^{\ell}$ are orthogonal matrices, hence $Q$ is orthogonal as well.
Therefore, the spectrum of $\mathcal{H}(y)$ and $\mathcal{H}\big(g(y)\big)$ are
equivalent:
\begin{eqnarray}\label{appendix-proof-1-1}
\epsilon_s = \epsilon_s^g \qquad{\rm for}~~ 1\leq s\leq N\cdot n_{\Xi}.
\end{eqnarray}
Let 
\begin{eqnarray*}
\Psi_s = \left( \begin{array}{c}
\Psi_s^1 \\ \vdots \\ \Psi_s^N
\end{array} \right)
\quad{\rm with}\quad
\Psi_s^{\ell} = \left( \begin{array}{c}
\Psi_s^{\ell 1} \\ \vdots \\ \Psi_s^{\ell n_{\Xi}}
\end{array} \right)
\qquad{\rm for}~~ 1\leq s\leq N\cdot n_{\Xi}
\end{eqnarray*}
be the eigenfunction of $\mathcal{H}(y)$ corresponding to the eigenvalue $\epsilon_s$. 
Then the corresponding eigenfunction of $\mathcal{H}\big(g(y)\big)$ is
\begin{eqnarray}\label{appendix-proof-1-2}
\Psi_s^g = Q\Psi_s = \left( \begin{array}{c}
Q^1\Psi_s^1 \\ \vdots \\ Q^N\Psi_s^N
\end{array} \right) .
\end{eqnarray}

Next we note that, with multiple orbitals per atom, the expression
\eqref{site-energy} should be rewritten as
\begin{eqnarray}\label{site-energy-multi}
E_\ell(y) = \sum_{s} f(\varepsilon_s) \varepsilon_s \sum_{\alpha} \left(\Psi_s^{\ell\alpha}\right)^2 
= \sum_{s}\mathfrak{f}(\varepsilon_s) \sum_{\alpha} \left(\Psi_s^{\ell\alpha}\right)^2 .
\end{eqnarray}
Taking into account \eqref{appendix-proof-1-1}, \eqref{appendix-proof-1-2} and
\eqref{site-energy-multi} we obtain invariance of the site energy under
isometries,
\begin{eqnarray}
  E_{\ell}(y) = E_{\ell}(g(y)),
\end{eqnarray}

To summarize, in the case of multiple orbitals, the assumption {\bf H.sym (i)}
should become
\begin{flushleft} {\bf H.sym' (i)}.  If $y\in\mathcal{V}_{\mathfrak{m}}^N$ and
  $g:\R^d\rightarrow\R^d$ is an isometry on $\R^d$, then there exist orthogonal
  matrices $Q^{\ell}\in\R^{n_{\Xi}\times n_{\Xi}}$ for $\ell=1,\cdots,N$ such
  that \eqref{ham-relate} is satisfied.
\end{flushleft}
(This is equivalent to {\bf H.sym (i)} when $n_{\Xi}=1$.)

\begin{remark}
  Slater and Koster worked out expressions such as \eqref{ham-local-relate} and
  \eqref{ham-relate} for all integrals between $s$, $p$ and $d$ orbitals and
  presented them in Table 1 of their paper \cite{slater54}. This has been
  invaluable for practical calculations, see,
  e.g. \cite{finnis03,Papaconstantopoulos15}.
\end{remark}

\begin{remark}
  We stress again that all our assumptions and analysis in the present paper can
  be extended to the multi-orbital case without any difficulty, by taking the
  Hamiltonian as a block matrix with
  \begin{eqnarray}
    h_{\ell k}(y) = \Big(\mathcal{H}(y)\Big)_{\ell k} \in \R^{n_{\Xi}\times n_{\Xi}}
  \end{eqnarray}
  and $|h_{\ell k}(y)|$ the Frobenius norm of the submatrix.
\end{remark}

\section{Site energy with non-orthogonal orbitals} 
\label{sec-appendix-siteoverlap}
\renewcommand{\theequation}{B.\arabic{equation}}
\renewcommand{\thetheorem}{B.\arabic{theorem}}
\renewcommand{\thelemma}{B.\arabic{lemma}}
\renewcommand{\theproposition}{B.\arabic{proposition}}
\renewcommand{\thealgorithm}{B.\arabic{algorithm}}
\renewcommand{\theremark}{B.\arabic{remark}}
\setcounter{equation}{0}

We consider the tight binding model with non-orthogonal 
atomic orbitals in this appendix.
It has been shown in Remark \ref{rem:discussion_of_HX} (iv) that the transformed Hamiltonian is
\begin{eqnarray*}
	\widetilde{\Ham}=\mathcal{M}^{-1/2}\Ham\mathcal{M}^{-1/2}
\end{eqnarray*}
when the overlap matrix is not an identity matrix.
Then the transformed eigenvectors of $\Ham\psi_s=\varepsilon_s\psi_s$ become
$\tilde{\psi}_s = \mathcal{M}^{1/2}\psi_s$,
and following \eqref{E-El}, the site energy is given by
\begin{eqnarray}\label{El-B-1}
E_\ell(y) = \sum_{s} f(\varepsilon_s) \varepsilon_s [\tilde{\psi}_s]_{\ell}^2 
= \sum_{s}\mathfrak{f}(\varepsilon_s) [\mathcal{M}^{1/2}\psi_s]_{\ell}^2.
\end{eqnarray}
Since the square root of a matrix in \eqref{El-B-1} introduces additional
computational cost for the site energy computations, we modify the definition of
site energy in practice by
\begin{eqnarray}\label{El-B-2}
\widetilde{E}_\ell(y) = \sum_{s}\mathfrak{f}(\varepsilon_s)
 [\mathcal{M}\psi_s]_{\ell}[\psi_s]_{\ell}.
\end{eqnarray}
The following result states that the modified site energy
 \eqref{El-B-2} preserves the locality property.

\begin{lemma}\label{lemma-locality-site-B}
  Assume that \asL, \asHtb, \asHloc, {\bf F} are satisfied, and moreover, the
  overlap matrix $\mathcal{M}$ satisfy the same conditions as those in \asHtb,
  \asHloc.

	Then, for
	$1\leq j\leq\mathfrak{n}$, there exist positive constants $\tilde{C}_j$ and $\tilde{\eta}_j$
	such that for any $\ell\in\Lambda_N$,
	\begin{eqnarray}\label{site-j-decay-finite-B}
	\left|\frac{\partial^j \widetilde{E}_{\ell}(y)}{\partial [y(m_1)]_{i_1}\cdots\partial [y(m_j)]_{i_j}}\right|
	\leq \tilde{C}_j e^{-\tilde{\eta}_j\sum_{l=1}^j|y(\ell)-y(m_l)|} \qquad 1\leq i_1,\cdots,i_j\leq d.
	\end{eqnarray}
\end{lemma}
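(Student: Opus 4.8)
The proof of Lemma~\ref{lemma-locality-site-B} will run almost entirely parallel to that of Lemma~\ref{lemma-locality-site}, the only new ingredient being the presence of the overlap matrix $\mathcal{M}$ (and products involving it) inside the contour integral. The first step is to rewrite the modified site energy \eqref{El-B-2} in contour-integral form. Using $\tilde\psi_s = \mathcal{M}^{1/2}\psi_s$ one checks that $\mathcal{M}\psi_s = \mathcal{M}^{1/2}\tilde\psi_s$ and $[\psi_s]_\ell = [\mathcal{M}^{-1/2}\tilde\psi_s]_\ell$, so that $\sum_s \mathfrak{f}(\varepsilon_s) [\mathcal{M}\psi_s]_\ell [\psi_s]_\ell = \big( e_\ell, \mathcal{M}^{1/2}\mathfrak{f}(\widetilde{\mathcal{H}})\mathcal{M}^{-1/2} e_\ell \big)$; since $\widetilde{\mathcal{H}} = \mathcal{M}^{-1/2}\mathcal{H}\mathcal{M}^{-1/2}$ is symmetric with the same spectrum as the generalized problem, one obtains
\begin{equation*}
  \widetilde{E}_\ell(y) = -\frac{1}{2\pi i}\oint_{\mathscr{C}} \mathfrak{f}(z)
  \Big[ \mathcal{M}^{1/2}\, \widetilde{\mathscr{R}}_z\, \mathcal{M}^{-1/2} \Big]_{\ell\ell} \dd z,
\end{equation*}
where $\widetilde{\mathscr{R}}_z = (\widetilde{\mathcal{H}}-zI)^{-1}$. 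Equivalently, and more conveniently, one uses $\mathcal{M}^{1/2}\widetilde{\mathscr{R}}_z\mathcal{M}^{-1/2} = \mathcal{M}(\mathcal{H}-z\mathcal{M})^{-1}$, so $\widetilde{E}_\ell(y) = -\frac{1}{2\pi i}\oint_{\mathscr{C}} \mathfrak{f}(z)\big[\mathcal{M}(\mathcal{H}-z\mathcal{M})^{-1}\big]_{\ell\ell}\dd z$. The contour $\mathscr{C}$ can still be chosen uniformly: the spectrum of $\widetilde{\mathcal{H}}$ is contained in a fixed interval by Lemma~\ref{lemma-H-bound} applied to $\widetilde{\mathcal{H}}$ (using that $\mathcal{M}^{-1/2}$ has exponential decay, hence $\widetilde{\mathcal{H}}$ satisfies a bound of the form \eqref{h-decay-0}), and the distance condition \eqref{dist-c} holds with a uniform $\mathfrak{d}$.

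**Decay of the building blocks.** The second step is to establish exponential off-diagonal decay for each matrix appearing in the integrand. By hypothesis $\mathcal{M}$ satisfies the analogues of \asHtb and \asHloc; the argument of Remark~\ref{rem:discussion_of_HX}(iv), following \cite{benzi13}, gives the same for $\mathcal{M}^{-1/2}$ and hence for $\mathcal{M}^{\pm 1/2}$ and products. Consequently $\widetilde{\mathcal{H}} = \mathcal{M}^{-1/2}\mathcal{H}\mathcal{M}^{-1/2}$ satisfies the hypotheses of Lemma~\ref{lemma-resolvant-decay}, so its resolvent $\widetilde{\mathscr{R}}_z$ decays exponentially off-diagonal, uniformly in $z\in\mathscr{C}$. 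Moreover, derivatives of $\widetilde{\mathcal{H}}$ with respect to atomic positions again satisfy bounds of the form \eqref{h-decay-j}: the product rule applied to $\mathcal{M}^{-1/2}\mathcal{H}\mathcal{M}^{-1/2}$ produces finitely many terms, each a product of exponentially decaying factors (using that derivatives of $\mathcal{M}^{-1/2}$ decay — which follows by differentiating the identity $\mathcal{M}^{-1/2}\mathcal{M}^{1/2}=I$ and using the established decay of $\mathcal{M}^{\pm 1/2}$), and convolving exponentials over a \asL-separated set preserves exponential decay at the cost of reducing the rate.

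**Conclusion and the main obstacle.** With these decay estimates in hand, the third step is to differentiate the contour representation $j$ times, exactly as in \eqref{site-force}--\eqref{site-higher-derivative}: the resulting integrand is a sum of terms of the form $\big[ A_0 \partial^{(\cdot)}\widetilde{\mathcal{H}}\, A_1 \partial^{(\cdot)}\widetilde{\mathcal{H}}\cdots A_k \big]_{\ell\ell}$ where each $A_i$ is either $\widetilde{\mathscr{R}}_z$ or one of $\mathcal{M}^{\pm 1/2}$, and the derivative orders sum to $j$. Each such term is estimated by the same chained-summation argument as in the proof of Lemma~\ref{lemma-locality-site} (see \eqref{eq:site-force-estimate-proof}): insert the off-diagonal decay of every factor, sum over the internal indices using \asL, and collect the exponents; every path from $\ell$ back to $\ell$ must pass near each differentiated site $m_l$, which yields the factor $e^{-\eta_j \sum_l |y(\ell)-y(m_l)|}$ after possibly halving the rate. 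Integrating over the bounded contour $\mathscr{C}$ and using that $\mathfrak{f}$ is bounded on $\mathscr{C}$ (assumption {\bf F}) gives \eqref{site-j-decay-finite-B} with $\tilde\eta_j$ depending only on $\min\{\gamma_{\rm r},\gamma_1,\dots,\gamma_j\}$, the analogous decay rates for $\mathcal{M}^{\pm 1/2}$, $\mathfrak{m}$ and $d$. The main obstacle, and the only genuinely new work compared to Lemma~\ref{lemma-locality-site}, is the bookkeeping in the second step: one must verify carefully that $\mathcal{M}^{-1/2}$ and its position-derivatives inherit exponential decay, and that the decay rate one ends up with is uniform in the system size $N$ — this is where the Combes--Thomas / holomorphic-functional-calculus argument of \cite{benzi13} must be invoked, since $\mathcal{M}^{-1/2} = \frac{1}{2\pi i}\oint (z^{-1/2})(\mathcal{M}-zI)^{-1}\dd z$ over a contour enclosing the (uniformly bounded, uniformly positive) spectrum of $\mathcal{M}$.
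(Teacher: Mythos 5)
Your proposal is correct and follows essentially the same route as the paper: both reduce $\widetilde{E}_\ell$ to the diagonal entry $[\mathcal{M}^{1/2}\,\mathfrak{f}(\widetilde{\mathcal{H}})\,\mathcal{M}^{-1/2}]_{\ell\ell}$, establish exponential off-diagonal decay of $\mathcal{M}^{\pm 1/2}$, of $\mathfrak{f}(\widetilde{\mathcal{H}})$ (via the contour representation and Lemma \ref{lemma-resolvant-decay} applied to $\widetilde{\mathcal{H}}$), and of their position-derivatives, and then conclude by the product rule and the chained-summation argument of \eqref{eq:site-force-estimate-proof}. The only difference is organizational (you differentiate the contour integrand directly, the paper first names $\varXi=\mathfrak{f}(\widetilde{\mathcal{H}})$ and then differentiates the triple product), and your extra care about why $\mathcal{M}^{-1/2}$ and its derivatives inherit uniform decay is a point the paper treats more tersely.
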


\begin{proof}
	Let $\varXi = \mathfrak{f}(\widetilde{\mathcal{H}})$
	 (with $\varXi_{jk} = \sum_s\mathfrak{f}(\varepsilon_s)[\tilde{\psi}_s]_j[\tilde{\psi}_s]_k$).
	The assumptions on $\Ham$ and $\mathcal{M}$ imply that the transformed Hamiltonian $\widetilde{\Ham}$ also satisfies the conditions in \asHtb and \asHloc.	
	Using Lemma \ref{lemma-resolvant-decay} and 
	similar arguments as those in the proof of Lemma \ref{lemma-locality-site},
	we have
	\begin{eqnarray}\label{proof-B-1}
          |\varXi_{jk}| \leq Ce^{-\gamma|y(j)-y(k)|}
	\quad{\rm and}\quad
          \bigg|\frac{\partial\varXi_{jk}}{\partial [y(n)]_i} \bigg| \leq Ce^{-\gamma(|y(n)-y(j)|+|y(n)-y(k)|)}
	\end{eqnarray}
	with some constants $C$ and $\gamma$.
	Similarly, the assumptions on $\mathcal{M}$ also imply
	\begin{eqnarray}\label{proof-B-2}
	\big|\mathcal{M}^{\pm 1/2}_{jk}\big| \leq Ce^{-\gamma|y(j)-y(k)|}
	\quad{\rm and}\quad
          \Bigg|\frac{\partial\mathcal{M}^{\pm 1/2}_{jk}}{\partial [y(n)]_i}\Bigg| \leq Ce^{-\gamma(|y(n)-y(j)|+|y(n)-y(k)|)}.
	\end{eqnarray}
	
	We have from \eqref{El-B-1} and \eqref{El-B-2} that
	\begin{align*}
	E_{\ell} &=  \sum_s\mathfrak{f}(\varepsilon_s)\sum_{jk}\mathcal{M}^{1/2}_{\ell j}\mathcal{M}^{1/2}_{\ell k}[\psi_s]_j[\psi_s]_k 
	= \varXi_{\ell\ell}, \\
	\widetilde{E}_{\ell} &=  \sum_s\mathfrak{f}(\varepsilon_s)\sum_{j}\mathcal{M}_{\ell j}[\psi_s]_j[\psi_s]_{\ell}
	= [\mathcal{M}^{1/2}\varXi\mathcal{M}^{-1/2}]_{\ell\ell}.
	\end{align*}
	Therefore,
	\begin{eqnarray*}
	\frac{\partial \widetilde{E}_{\ell}}{\partial [y(n)]_i}
	= \sum_{jk}\left(
	\frac{\partial\mathcal{M}^{1/2}_{\ell j}}{\partial [y(n)]_i}\varXi_{jk}\mathcal{M}^{-1/2}_{k\ell}
	+ \mathcal{M}^{1/2}_{\ell j}\frac{\partial\varXi_{jk}}{\partial [y(n)]_i}\mathcal{M}^{-1/2}_{k\ell}
	+ \mathcal{M}^{1/2}_{\ell j}\varXi_{jk}\frac{\partial\mathcal{M}^{-1/2}_{k\ell}}{\partial [y(n)]_i} \right),
	\end{eqnarray*}
	which together with \eqref{proof-B-1}, \eqref{proof-B-2},
	and a similar argument as that in \eqref{eq:site-force-estimate-proof} 
	completes the proof of \eqref{site-j-decay-finite-B} for $j=1$.
		
	The proofs for $2\leq j\leq \mathfrak{n}$ are similar.
\end{proof}

\bibliographystyle{siam}
\bibliography{qmmmtb1bib}

\end{document}